\documentclass[reqno,11pt]{amsart}

\usepackage{etex}
\usepackage{amsmath, amssymb}
\usepackage{array}
\usepackage[frame,cmtip,arrow,matrix,line,graph,curve]{xy}
\usepackage{graphpap, color, paralist, pstricks}
\usepackage[mathscr]{eucal}
\usepackage[pdftex]{graphicx}
\usepackage[pdftex,colorlinks,backref=no,citecolor=blue]{hyperref}
\usepackage{tikz}

\numberwithin{equation}{section}

\newtheorem{thm}{Theorem}[section]
\newtheorem{prop}[thm]{Proposition}
\newtheorem{cor}[thm]{Corollary}
\newtheorem{lem}[thm]{Lemma}

\theoremstyle{plain}

\newtheorem{rem}[thm]{Remark}

\newcommand{\cc}{\mathbb{C}}

\newcommand{\nn}{\mathbb{N}}
\newcommand{\pp}{\mathbb{P}}
\newcommand{\qq}{\mathbb{Q}}
\newcommand{\zz}{\mathbb{Z}}

\newcommand{\rS}{\mathit{S}}
\newcommand{\SL}{\mathrm{SL}}
\newcommand{\End}{\mathrm{End}\,}

\newcommand{\EEnd}{{\mathscr E\!nd}\,}
\newcommand{\Hom}{\mathrm{Hom}}

\newcommand{\coeff}{\mathrm{Coeff}}

\newcommand{\con}{\mathrm{con}}
\newcommand{\dest}{\mathrm{dest}}
\newcommand{\fr}{\mathrm{fr}}
\newcommand{\id}{\mathrm{id}}
\newcommand{\im}{\mathrm{im \,}}

\newcommand{\rk}{\mathrm{rank} \,}
\newcommand{\Pic}{\mathrm{Pic}}

\newcommand{\bt}{\mathbf{t}}

\newcommand{\bM}{\mathbf{M}}
\newcommand{\bP}{\mathbf{P}}
\newcommand{\bR}{\mathbf{R}}
\newcommand{\bU}{\mathbf{U}}
\newcommand{\bFM}{\mathbf{FM}}
\newcommand{\bFH}{\mathbf{FH}}
\newcommand{\bOFH}{\mathbf{OFH}}
\newcommand{\bOFM}{\mathbf{OFM}}

\newcommand{\cH}{\mathcal{H}}
\newcommand{\cJ}{\mathcal{J}}
\newcommand{\cL}{\mathcal{L}}

\newcommand{\cN}{\mathcal{N}}
\newcommand{\cO}{\mathcal{O} }
\newcommand{\cP}{\mathcal{P} }

\newcommand{\fE}{\mathfrak{E} }
\newcommand{\fP}{\mathfrak{P} }
\newcommand{\fQ}{\mathfrak{Q} }
\newcommand{\fN}{\mathfrak{N} }
\newcommand{\fR}{\mathfrak{R} }
\newcommand{\fT}{\mathfrak{T} }
\newcommand{\fU}{\mathfrak{U} }

\newcommand{\tbFM}{\widetilde{\mathbf{FM}}}
\newcommand{\tbFH}{\widetilde{\mathbf{FH}}}

\newcommand{\tp}{\tilde{p}}
\newcommand{\ve}{\varepsilon }
\newcommand{\s}{\sigma }

\newcommand{\dss}{\displaystyle}

\begin{document}

\title[A chain of $\cc^{*}$-flips of moduli of $\cO$-twisted rank 2 constrained framed Hitchin pairs on a curve]{A chain of $\cc^{*}$-flips of the moduli spaces of $\cO$-twisted rank 2 constrained framed Hitchin pairs on a smooth curve}

\author{YongJoo Shin}
\address{Department of Mathematics, Chungnam National University,
Science Building 1, 99 Daehak-ro, Yuseong-gu, Daejeon 34134, Republic of Korea}
\email{haushin@cnu.ac.kr}

\author{Sang-Bum Yoo}
\address{Department of Mathematics Education, Gongju National University of Education, 27 Ungjin-ro, Gongju-si, Chungcheongnam-do, 32553, Republic of Korea}
\email{sbyoo@gjue.ac.kr}

\keywords{}
\subjclass[2020]{14D20}

\begin{abstract}	
Let $X$ be a smooth complex projective curve. We prove that there exists a surjective commutative forgetful diagram from the chain of $\cc^{*}$-flips of the moduli spaces of $\cO_{X}$-twisted rank 2 constrained framed Hitchin pairs on $X$ to the chain of $\cc^{*}$-flips of the moduli spaces of rank 2 framed modules on $X$.
\end{abstract}

\maketitle

\section{Introduction}
Birational geometries of the moduli spaces of Hitchin pairs were studied in various ways, using the symplectic cut \cite{Haus01}, the flips of moduli of parabolic Higgs bundles \cite{Tha02} and the flips of moduli of framed Hitchin pairs \cite{Sch00}. Especially, the second and third ways can be recovered by using GIT flips of \cite{Tha96}. We focus on the third.

Let $X$ be a smooth complex projective curve of genus $g$ throughout this paper. Let $L$ and $\cH$ be vector bundles on $X$. Fix a positive $\s\in\qq$ . Then we consider the moduli space of $\s$-semistable (resp. $\s$-stable) framed modules $(E,\psi\colon E\to\cH)$ of rank $r$ and degree $d$ on $X$ denoted by $\bFM^{\s-ss}(r,d,\cH)$ (resp. $\bFM^{\s-s}(r,d,\cH)$), and the moduli space of $\s$-semistable (resp. $\s$-stable) $L$-twisted framed Hitchin pairs $(E,\ve\in\cc,\phi\colon E\to E\otimes L,\psi\colon E\to\cH)$ of rank $r$ and degree $d$ on $X$ denoted by $\bFH^{\s-ss}(r,d,L,\cH)$ (resp. $\bFH^{\s-s}(r,d,L,\cH)$). Further, $\bFH_{\con}^{\s-ss}(r,d,L,\cH)$ (resp. $\bFH_{\con}^{\s-s}(r,d,L,\cH)$) denotes the locus of $\bFH^{\s-ss}(r,d,L,\cH)$ (resp. $\bFH^{\s-s}(r,d,L,\cH)$) parametrizing $(E,\ve,\phi,\psi)$ such that $\phi\in\End((E,\psi))$, where $\End((E,\psi))$ is the space of endomorphisms of $(E,\psi)$ (see Section \ref{s2}). The object parametrized by $\bFH_{\con}^{\s-ss}(r,d,L,\cH)$ (resp. $\bFH_{\con}^{\s-s}(r,d,L,\cH)$) is called a $\s$-semistable $L$-twisted (resp. $\s$-stable $L$-twisted) {\bf constrained} framed Hitchin pair.

A chain of $\cc^{*}$-flips connecting to the moduli space of vector bundles on $X$ was constructed via a variation of $\bFM^{\s-ss}(r,d,\cH)$ in \cite{OST99}. A chain of $\cc^{*}$-flips connecting to the moduli space of $L$-twisted Hitchin pairs on $X$ was constructed via a variation of $\bFH^{\s-ss}(r,d,L,\cH)$ in \cite{Sch00}. Here a $\cc^{*}$-flip is a variation of GIT under a $\cc^{*}$-action in the sense of \cite[Theorem 3.3]{Tha96}. The purpose of this work is to show the following main result.

\begin{thm}[{i.e. Theorem \ref{commutativity of C star filps}}]\label{main thm}
There exists a surjective commutative forgetful diagram from the chain of $\cc^{*}$-flips of $\bFH_{\con}^{\s-ss}(2,d,\cO_{X},\cH)$ to the chain of $\cc^{*}$-flips of $\bFM^{\s-ss}(2,d,\cH)$.
\end{thm}

To prove Theorem \ref{main thm} we need to consider equivalences of stabilities between $\cO_{X}$-twisted rank $2$ constrained (oriented) framed Hitchin pairs and (oriented) framed modules on $X$ as follows. See Section \ref{stabilities} for details.

\begin{thm}[{i.e. Theorem \ref{ss FH equiv ss FM}, and cf. Theorem \ref{ss OFH equiv ss OFM} for an oriented case}]\label{ss O-twisted framed Hitchin pair}
A constrained $\mathcal{O}_X$-twisted framed Hitchin pair is $\s$-semistable if and only if its underlying framed module is $\s$-semistable.
\end{thm}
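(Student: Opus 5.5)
Recall the setup, in the form of the Huybrechts--Lehn / Schmitt stability. For a framed module $(E,\psi)$ of rank $r$ and degree $d$ and a subsheaf $F\subset E$, set $\epsilon(F)=1$ if $F\not\subset\ker\psi$ and $\epsilon(F)=0$ otherwise. Then $(E,\psi)$ is $\s$-semistable if and only if, for every proper nonzero saturated $F\subset E$,
\[
\frac{\deg F-\s\,\epsilon(F)}{\rk F}\le\frac{d-\s}{r}.
\]
For an $L$-twisted framed Hitchin pair $(E,\ve,\phi,\psi)$, the same inequality is required, but only for the $\phi$-invariant subsheaves $F$, i.e. those with $\phi(F)\subset F\otimes L$. (Schmitt's condition also involves $\ve$; I will check that it reduces to this form once the constraint is imposed.) With this description, one direction of Theorem \ref{ss O-twisted framed Hitchin pair} is formal. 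If the underlying framed module is $\s$-semistable, the inequality holds for all saturated subsheaves, hence in particular for the $\phi$-invariant ones, so the pair is $\s$-semistable.

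For the converse, assume the constrained pair $(E,\ve,\phi,\psi)$ with $L=\cO_X$ and $r=2$ is $\s$-semistable. Suppose, for contradiction, that $(E,\psi)$ is not $\s$-semistable. Since $r=2$, a destabilizing subsheaf is a line subbundle $F\subset E$ with
\[
\deg F-\s\epsilon(F)>\tfrac{d-\s}{2}.
\]
The key claim is that every endomorphism $\phi\in\End((E,\psi))$ preserves such an $F$. Given the claim, $F$ is $\phi$-invariant and destabilizes the Hitchin pair, which is the contradiction we want.

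To prove the claim, I would use the structure of $\End((E,\psi))$. Such a $\phi\colon E\to E$ satisfies $\psi\circ\phi=\lambda\psi$ for some scalar $\lambda$; I will take this as the definition in Section \ref{s2}, or derive it from that definition. Since $X$ is projective, the characteristic polynomial of $\phi$ has constant coefficients, so $\phi$ has constant eigenvalues. Consider the composite
\[
F\hookrightarrow E\xrightarrow{\phi}E\to E/F=:Q,
\]
a map of line bundles $F\to Q$. If it is nonzero, then $\deg F\le\deg Q=d-\deg F$. I would split into cases according to $\epsilon(F)$ and $\epsilon$ of related subsheaves.

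If $\epsilon(F)=0$, the destabilizing condition reads $\deg F>(d-\s)/2$. Here $\psi\phi(F)=\lambda\psi(F)=0$, so $\phi(F)\subset\ker\psi$. I would then look at $\phi(F)$ or at the subsheaf generated by $F$ and $\phi(F)$. Replacing $\phi$ by $\phi-\mu\,\id$ for a suitable eigenvalue $\mu$ (still an endomorphism of $(E,\psi)$), I expect to reduce to a nilpotent or diagonalizable $\phi$. In the nilpotent case, $\ker\phi$ and $\im\phi$ are line subbundles, and I would show that $F$ must coincide with the saturation of one of them using the degree bounds. In the diagonalizable case with distinct eigenvalues, $E$ splits as $E\cong F_1\oplus F_2$ with $\psi$ adapted to the splitting. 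A destabilizing $F$ maps nontrivially to some $F_i$, and I would compare its degree with that of the $\phi$-invariant $F_i$; then $\s$-semistability of the pair applied to the invariant eigen-line-bundles should force the inequality for $F$ as well. If $\epsilon(F)=1$, I would argue in the same way with the shift by $\s$.

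The main obstacle is this last step: showing that a destabilizing line subbundle is $\phi$-invariant, or else that some $\phi$-invariant line subbundle destabilizes at least as badly. I would attempt it first by using the uniqueness of the maximal destabilizing subsheaf, via a Harder--Narasimhan-type argument for framed modules. Since $\phi$ is an endomorphism of the framed module, it should preserve the canonical maximal destabilizing subobject. Concretely, the first step is to check that the image under $\phi$ of the maximal destabilizing $F$ either lands in $F$ or yields a map $F\to Q$ that contradicts the slope inequality. I would establish this by a short case analysis on $\epsilon$ and on whether $\lambda=0$.
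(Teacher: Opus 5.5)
Your easy direction is correct and matches the paper. Your concern about $\ve$ is moot: in Section \ref{framed Hitchin pairs} the $\s$-(semi)stability of $(E,\ve,\phi,\psi)$ tests only $\phi$-invariant subbundles and does not involve $\ve$.

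The gap is in the converse, in two places. First, you assume $r=2$, but the statement (Theorem \ref{ss FH equiv ss FM}) is for arbitrary rank; the paper deliberately separates it from the rank-$2$ stable statement. Second, even for $r=2$ the key claim, that $\phi$ preserves a destabilizing subbundle, is only announced. The eigenvalue/diagonalization case analysis stops at ``I expect'' and ``should force'', and it is not needed. Your last paragraph names the right idea, but your concrete version relies on $E/F$ being a line bundle. In rank $\ge 3$ the quotient of the maximal destabilizing submodule need not be semistable, so comparing $F$ with $E/F$ alone cannot rule out a nonzero $F\to E/F$.

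The missing step, which is how the paper argues, combines the Harder--Narasimhan filtration of framed modules (Proposition \ref{HN filt}) with the Hom-vanishing of Lemma \ref{vanishing hom}.

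Suppose $(E,\psi)$ is not $\s$-semistable, with HN filtration $(E_1,\psi_{E_1})\subset\cdots\subset(E_l,\psi_{E_l})=(E,\psi)$. If $\phi(E_1)\not\subset E_1$, take $m>1$ minimal with $\phi(E_1)\subset E_m$. Then $\phi$ induces a nonzero map $\bar\phi\colon E_1\to E_m/E_{m-1}$. This is a homomorphism of framed modules because $\psi\circ\phi=\lambda\psi$:
\begin{itemize}
\item either $\psi|_{E_{m-1}}\ne0$, and the target framing is zero;
\item or $\psi_{E_m/E_{m-1}}\circ\bar\phi=\lambda\psi_{E_1}$.
\end{itemize}
Both framed modules are $\s$-semistable, and the reduced framed Hilbert polynomial of $(E_1,\psi_{E_1})$ is strictly larger than that of $(E_m/E_{m-1},\psi_{E_m/E_{m-1}})$. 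So Lemma \ref{vanishing hom} forces $\bar\phi=0$, a contradiction. Hence $E_1$ is $\phi$-invariant and destabilizes the pair.

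In rank $2$ this is exactly your ``short case analysis'' with $l=2$, since rank-one framed modules are automatically semistable:
\begin{itemize}
\item If $F\not\subset\ker\psi$, then $\deg(E/F)<(d-\s)/2<(d+\s)/2<\deg F$, so $F\to E/F$ vanishes.
\item If $F\subset\ker\psi$ and $\psi\ne0$, degrees do not suffice. You need $\psi_{E/F}\circ\bar\phi=\lambda\psi_F=0$ with $\psi_{E/F}$ injective. Equivalently, $F=\ker\psi$, which is $\phi$-invariant.
\end{itemize}
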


\begin{thm}[{i.e. Corollary \ref{st FH equiv st FM in rank 2}, and cf. Theorem \ref{st OFH equiv st OFM in rank 2} for an oriented case}]\label{ss O-twisted rank 2 framed Hitchin pair}
A constrained $\mathcal{O}_X$-twisted rank $2$ framed Hitchin pair with a nonzero framing is $\s$-stable if and only if its underlying framed module is $\s$-stable.
\end{thm}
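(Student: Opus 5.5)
By Theorem \ref{ss O-twisted framed Hitchin pair} we only have to compare the strict inequalities, and one direction is immediate. $\s$-stability of a framed Hitchin pair $(E,\ve,\phi,\psi)$ is tested only on $\phi$-invariant subsheaves, which are among the subsheaves used to test $\s$-stability of $(E,\psi)$. So if $(E,\psi)$ is $\s$-stable, the pair is $\s$-stable.

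For the converse, let the pair be $\s$-stable, so $(E,\psi)$ is $\s$-semistable by Theorem \ref{ss O-twisted framed Hitchin pair}. Suppose $(E,\psi)$ is not $\s$-stable. Since $\psi\neq 0$ we have $\ve(E)=1$. Since $\mathrm{rk}\,E=2$, there is a saturated line subbundle $M\subset E$ with
\[
\deg M-\s\,\ve(M)=\tfrac{d-\s}{2},
\]
where $\ve(M)\in\{0,1\}$ records whether $\psi|_M\neq 0$. We want to produce a $\phi$-invariant line subsheaf with the same equality, which contradicts stability of the pair.

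The key reduction uses that the pair is constrained and $L=\cO_X$. Then $\phi\in\End((E,\psi))$ means $\psi\circ\phi=\lambda\psi$ for a constant $\lambda$. Set $\phi'=\phi-\lambda\,\id$. This is again an endomorphism of $E$, it has exactly the same invariant subsheaves as $\phi$, and it satisfies $\psi\circ\phi'=0$. Hence $\mathrm{im}\,\phi'\subset\ker\psi$, and every subsheaf of $\ker\psi$ has $\ve=0$. If $\phi'=0$, every subsheaf is invariant and we are done. So assume $\phi'\neq 0$.

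First case: $\ker\psi$ has rank $1$. Let $N$ be its saturation. Then $\phi'(N)\subset\ker\psi\subset N$, so $N$ is invariant, and $\ve(N)=0$.
\begin{itemize}
\item If $\ve(M)=0$, then $M\subset N$, so $M=N$ is invariant.
\item If $\ve(M)=1$ and $\phi'|_M\neq 0$, we get a nonzero map $M\to N$, so $\deg M\le\deg N$. Semistability gives $\deg N\le\frac{d-\s}{2}=\deg M-\s<\deg M$, a contradiction since $\s>0$. Hence $\phi'|_M=0$ and $M$ is invariant.
\end{itemize}

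Second case: $\ker\psi$ has rank $2$. If $\phi'$ is generically of rank $2$, then $\ker\phi'$ is torsion, hence zero since $E$ is torsion-free, and $E\cong\mathrm{im}\,\phi'\subset\ker\psi$, which forces $\psi=0$. So $\phi'$ is generically of rank $1$. Its kernel $K$ and the saturation $I$ of its image are then invariant line subbundles, and $\ve(I)=0$. I would run the same comparison as in the first case: either $M$ lies in $K$ or in $I$, or $\phi'|_M$ gives a nonzero map $M\to I$. In the last case, $\deg M\le\deg I\le\frac{d-\s}{2}$, which combined with the equality for $M$ again forces $M$ (or $I$) to be an invariant subsheaf attaining equality.

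I expect the main obstacle to be this last bookkeeping in the rank-$2$-kernel case. There one must track $\ve(M)$, $\ve(K)$ and $\ve(I)$ carefully, since the framing no longer singles out a canonical line subbundle, and make sure every subcase ends with a $\phi$-invariant line subsheaf realizing the equality. If this gets messy, the fallback is the characteristic polynomial: it has constant coefficients because $L=\cO_X$. If $\phi'$ is not nilpotent, its eigen-line-subbundles are invariant; if it is nilpotent, $\ker\phi'=\overline{\mathrm{im}\,\phi'}$ is the unique invariant line, and a direct degree comparison with $M$ should apply.
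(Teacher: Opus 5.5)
Your proof is correct and takes a different route from the paper's, with one simplification you missed: the case $\rk\ker\psi=2$ cannot occur. Since $E/\ker\psi\cong\mathrm{im}\,\psi\subset\cH$ is torsion-free and nonzero, $\ker\psi$ is a subbundle of rank at most $1$. Consequently $N=\ker\psi$ is already saturated, and the case $\ker\psi=0$ is absorbed by $\phi'=0$. The ``main obstacle'' you anticipate, and the characteristic-polynomial fallback, never arise, so your first case already completes the argument.

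The paper does not normalize $\phi$; it works with Harder--Narasimhan filtrations instead.
\begin{itemize}
\item It first excludes an equality-attaining subbundle inside $\ker\psi$, by showing that the first HN piece of $\ker\psi$ is $\phi$-invariant.
\item It then excludes one not contained in $\ker\psi$, using that the first HN piece $E_1$ of $E$ is preserved by every endomorphism of $E$.
\item This splits into the cases ``$E$ semistable'' or ``$E_1\subset\ker\psi$'' (Theorem \ref{st FH equiv st FM}, in arbitrary rank), and, only in rank $2$, ``$E_1\not\subset\ker\psi$'' (Corollary \ref{st FH equiv st FM in rank 2}).
\end{itemize}

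Your shift $\phi'=\phi-\lambda\,\id$ uses the constraint in its full strength; subtracting $\lambda\,\id$ makes sense because $L=\cO_X$. It forces $\mathrm{im}\,\phi'\subset\ker\psi$, after which everything reduces to the single comparison $\deg M\le\deg\ker\psi\le\frac{d-\s}{2}=\deg M-\s$. This avoids HN filtrations and any case split on the type of $E$. With the inequalities adjusted, the same argument also reproves Theorem \ref{ss FH equiv ss FM} directly in rank $2$.

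What your route gives up is the higher-rank content. It depends on $\rk\ker\psi\le 1$, whereas the paper's HN argument proves the stable equivalence in any rank when $E$ is semistable or $E_1\subset\ker\psi$.
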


\begin{rem}
Theorems \ref{ss O-twisted framed Hitchin pair} and \ref{ss O-twisted rank 2 framed Hitchin pair} hold for a smooth complex projective variety $Y$ of arbitrary dimension. Indeed, except for the statements of the moduli of oriented framed Hitchin pairs, the sections \ref{s2} and \ref{stabilities} are valid for such $Y$ of any dimension.
\end{rem}
This paper is organized as follows. In Section \ref{s2}, we introduce basics on framed modules, framed Hitchin pairs and their oriented objects. In Section \ref{flips}, we provide the chain of $\cc^{*}$-flips of $\bFM^{\s-ss}(r,d,\cH)$ by \cite{OST99} and that of $\bFH^{\s-ss}(r,d,L,\cH)$ by \cite{Sch00}. In Section \ref{stabilities}, we show that the stability of $\bFH_{\con}^{\s-ss}(2,d,\cO_{X},\cH)$ corresponds to the stability of $\bFM^{\s-ss}(2,d,\cH)$. In Section \ref{cstar flips}, we give a proof of Theorem \ref{main thm} (i.e. Theorem \ref{commutativity of C star filps}). In Section \ref{application}, we discuss the surjective commutative forgetful diagram from the chain of $\cc^{*}$-flips of $\bFH_{\con}^{\s-ss}(2,d,\cO_{X},\cO_{X})$ to the chain of $\cc^{*}$-flips of $\bFM^{\s-ss}(2,d,\cO_{X})$ of Theorem \ref{main thm} more explicitly.

\section{Framed modules, framed Hitchin pairs and their oriented objects}\label{s2}
In this section we give definitions of (oriented) framed modules, (orinted) framed Hitchin pairs and their stabilities. See \cite{HL95b, OST99, Sch00} for more details.
\subsection{Framed modules}\label{framed modules}
Let $\cH$ be a vector bundle on $X$. {\bf A framed module of type $(r,d,\cH)$ on $X$} is a pair $(E,\psi)$ which consists of the following ingredients:
\begin{itemize}
\item a vector bundle $E$ of rank $r$ and degree $d$ on $X$;
\item a homomorphism $\psi\colon E\to\cH$, called the framing of $E$.
\end{itemize}

Fix a positive $\s\in\qq$. For a vector bundle $E$, $P_{E}$ denotes the Hilbert polynomial of $E$. The Hilbert polynomial of a framed module $(E,\psi)$ is defined by $P_{(E,\psi)}:=P_{E}-\delta(\psi)\cdot\s$, where $\delta(\psi)=1$ if $\psi\ne0$ and $\delta(\psi)=0$ otherwise.

If $F$ is a subbundle of $E$ with the quotient $G=E/F$, then a framing $\psi\colon E\to\cH$ induces framings $\psi_{F}=\psi|_{F}$ on $F$ and $\psi_{G}$ on $G$: if $\psi_{F}\ne0$, then $\psi_{G}=0$, and otherwise it is the induced homomorphism of $\psi$ on $G$. With this convention we have
$$P_{(E,\psi)}=P_{(F,\psi_{F})}+P_{(G,\psi_{G})}.$$

A framed module $(E,\psi)$ is {\bf$\s$-(semi)stable}, if for any proper, nonzero subbundle $F$ of $E$,
$$\frac{P_{(F,\psi_{F})}}{\rk F}<(\le)\frac{P_{(E,\psi)}}{\rk E}.$$

Let $\bFM^{\s-ss}(r,d,\cH)$ be the moduli space of $\s$-semistable framed modules of type $(r,d,\cH)$ with nonzero framings on $X$. $\bFM^{\s-ss}(r,d,\cH)$ was constructed in  \cite{HL95b}. $\bFM^{\s-s}(r,d,\cH)$ denotes the stable locus of $\bFM^{\s-ss}(r,d,\cH)$.

\begin{rem}
The construction of $\bFM^{\s-ss}(r,d,\cH)$ in \cite{HL95b} has a generality. The moduli space parametrizes objects with underlying coherent sheaves on a smooth complex projective variety, where $\s$ is a rational polynomial with a positive leading coefficient.
\end{rem}

Let $(E,\psi)$ and $(E',\psi')$ be framed modules of type $(r,d,\cH)$ and $(r',d',\cH')$ respectively. {\bf A homomorphism $h\colon (E,\psi)\to(E',\psi')$ of framed modules} is a homomorphism of the underlying bundles $h\colon E\to E'$ for which there is an element $w\in\cc$ such that $\psi'\circ h=w\psi$. The space $\Hom((E,\psi),(E',\psi'))$ of homomorphisms of framed modules is a linear subspace of $\Hom(E,E')$. The space $\Hom((E,\psi),(E,\psi))$ is denoted by $\End((E,\psi))$, called {\bf the space of endomorphisms of $(E,\psi)$}. If {\bf $h\colon (E,\psi)\to(E',\psi')$ is an isomorphism}, then it is an isomorphism of the underlying bundles $h\colon E\to E'$ and the factor $w$ is taken in $\cc^{*}$. In particular, the isomorphism $h_{0}:=w^{-1}h$ satisfies $\psi'\circ h_{0}=\psi$.

\begin{lem}\label{vanishing hom}
Let $(E,\psi)$ and $(E',\psi')$ be $\s$-semistable framed modules with framings $\psi\colon E\to\cH$ and $\psi'\colon E'\to \cH'$. If $\dss\frac{P_{(E,\psi)}}{\rk E}>\frac{P_{(E',\psi')}}{\rk E'}$, then $\Hom((E,\psi),(E',\psi'))=0$. If $(E,\psi)$ and $(E',\psi')$ are $\s$-stable with $\dss\frac{P_{(E,\psi)}}{\rk E}=\frac{P_{(E',\psi')}}{\rk E'}$, then any nontrivial homomorphism $h\colon (E,\psi)\to(E',\psi')$ is an isomorphism. Moreover in this case $\Hom((E,\psi),(E',\psi'))\cong\cc$.
\end{lem}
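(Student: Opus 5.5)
The plan is the standard image–kernel argument for semistable objects, adapted to framings. Let $h\colon (E,\psi)\to(E',\psi')$ be a nonzero homomorphism with $\psi'\circ h=w\psi$. Put $K=\ker h$ and $I=\im h$, and replace them by their saturations where needed; on a curve, $K$ is automatically a subbundle because $I\subset E'$ is torsion-free. Equip everything with the induced framings from Section \ref{framed modules}:
\begin{itemize}
\item $K\subset E$ gets $\psi_K=\psi|_K$;
\item $I\cong E/K$ gets the quotient framing $\psi_{E/K}$;
\item $I\subset E'$ (or its saturation $\bar I$) gets the restricted framing $\psi'|_I$.
\end{itemize}

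\textbf{Step 1: compare the two framings on $I$.} The key observation is $\delta(\psi_{E/K})\ge\delta(\psi'|_I)$, so that
\[
P_{(E/K,\psi_{E/K})}\le P_{(I,\psi'|_I)}.
\]
Suppose $\psi'|_I\neq0$. Since $\psi'|_I\circ h=w\psi$, this forces $w\neq0$ and $\psi\neq0$. Then $\psi|_K=w^{-1}\psi'\circ h|_K=0$, so by the convention $\psi_{E/K}$ is the map induced by $\psi$, which is nonzero. This inequality is the step needing the most care, since the sign of $\s$ enters through $\delta$. Saturating $I$ to $\bar I$ only increases the degree, so $P_{(I,\psi'|_I)}\le P_{(\bar I,\psi'|_{\bar I})}$.

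\textbf{Step 2: the chain of inequalities.} Write $\mu(\cdot)=P_{(\cdot)}/\rk$. By additivity,
\[
P_{(E,\psi)}=P_{(K,\psi_K)}+P_{(E/K,\psi_{E/K})}.
\]
Semistability of $E$ (trivially if $K=0$) gives $\mu(K)\le\mu(E)$, hence $\mu(E/K)\ge\mu(E)$. Semistability of $E'$ gives $\mu(\bar I)\le\mu(E')$. With Step 1 this yields
\[
\mu(E)\le\mu(E/K)\le\mu(I)\le\mu(\bar I)\le\mu(E').
\]
Under the hypothesis $\mu(E)>\mu(E')$ this is a contradiction, so $h=0$ and the Hom space vanishes.

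\textbf{Step 3: the stable case.} If both objects are stable and $\mu(E)=\mu(E')$, every inequality above is an equality. Stability of $E$ then forces $K=0$, and stability of $E'$ forces $\bar I=E'$. Equality of Hilbert polynomials gives $\deg I=\deg\bar I$ with equal rank, so $I=E'$ and $h$ is an isomorphism of bundles. The factor $w$ is nonzero when $\psi\ne0$: the equalities give $\delta(\psi)=\delta(\psi')$, and if $\psi\neq0$ then $w=0$ would force $\psi'=0$, contradicting this.

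\textbf{Step 4: $\Hom\cong\cc$.} Fix a nonzero $h_0$, which is an isomorphism by Step 3. For any $h$ in the Hom space, $h_0^{-1}\circ h$ is an endomorphism of $(E,\psi)$; note the framed-homomorphism condition is linear and closed under composition. Choose a point $x$ and an eigenvalue $\lambda$ of $h_0^{-1}h$ on the fiber $E_x$. Then $h_0^{-1}h-\lambda\,\id$ is an endomorphism of $(E,\psi)$ that is not an isomorphism, so by Step 3 it is zero. Hence $h=\lambda h_0$.
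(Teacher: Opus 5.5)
Your proof is correct and follows the paper's route. The paper also runs the image argument comparing the quotient framing from $E$ with the restricted framing from $E'$ on $\im h$. It likewise gets injectivity, surjectivity and $w\neq 0$ from the equality case, and it handles $\Hom\cong\cc$ by citing Huybrechts--Lehn, where you give the Schur-type eigenvalue argument. Your version is also slightly more careful in two places. You saturate the image, which the paper skips even though stability is only tested on subbundles. You also state the framing comparison in the direction actually needed, namely that $\psi'|_{I}\neq0$ forces $\psi_{E/K}\neq0$.
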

\begin{proof}
Let's prove the first statement. Suppose that $h\colon E\to E'$ is nontrivial such that there is an element $w\in\cc$ such that $\psi'\circ h=w\psi$. Let $G:=\im h$. Then we have
$$\frac{P_{(G,\psi'_{G})}}{\rk G}\le\frac{P_{(E',\psi')}}{\rk E'}$$
and
$$\frac{P_{(G,\psi_{G})}}{\rk G}\ge\frac{P_{(E,\psi)}}{\rk E}.$$
Note that $\psi'(h(e))=w\psi(e)=w\psi_{G}(h(e))$ for any $e\in E$. If $w=0$, then $\psi'_{G}=0$ and so $P_{(G,\psi_{G})}\le P_{G}=P_{(G,\psi'_{G})}$. If $w\ne0$, then $\psi'_{G}=0$ implies $\psi_{G}=0$, and so $P_{(G,\psi_{G})}\le P_{(G,\psi'_{G})}$. Hence we have $$\dss\frac{P_{(E,\psi)}}{\rk E}\le\frac{P_{(E',\psi')}}{\rk E'},$$ which is a contradiction to the assumption.

Next we prove the second statement. It suffices to show it for the case that $\psi\ne0$ or $\psi'\ne0$. Let $h\colon (E,\psi)\to(E',\psi')$ be a nontrivial homomorphism and $G:=\im h$. 

Assume that $h\colon (E,\psi)\to(E',\psi')$ is not an isomorphism. If $h$ is not injective, then the induced homomorphism $(E,\psi)\to(G,\psi_{G})$ is not an isomorphism. So $$\dss\frac{P_{(E,\psi)}}{\rk E}<\frac{P_{(G,\psi_{G})}}{\rk G}\le\frac{P_{(G,\psi'_{G})}}{\rk G}\le\frac{P_{(E',\psi')}}{\rk E'}$$ which is a contradiction to the assumption. If $h$ is not surjective, then the induced homomorphism $(G,\psi'_{G})\to(E',\psi')$ are not an isomorphism. So $$\dss\frac{P_{(E,\psi)}}{\rk E}\le\frac{P_{(G,\psi_{G})}}{\rk G}\le\frac{P_{(G,\psi'_{G})}}{\rk G}<\frac{P_{(E',\psi')}}{\rk E'}$$ which is a contradiction to the assumption. If $h$ is an isomorphism and $w=0$, then $\psi', \psi'_{G}$ are zero and $\psi, \psi_{G}$ are not zero. Thus $$\dss\frac{P_{(E,\psi)}}{\rk E}=\frac{P_{(G,\psi_{G})}}{\rk G}<\frac{P_{(G,\psi'_{G})}}{\rk G}=\frac{P_{(E',\psi')}}{\rk E'}$$ which is a contradiction to the assumption.

Finally the proof of the third statement is same as in the proof of \cite[Lemma 1.6]{HL95b}.
\end{proof}
\begin{rem}
The second and third statements of Lemma \ref{vanishing hom} are in \cite[Lemma 1.6]{HL95b}.
\end{rem}

\begin{lem}\label{max dest}
Let $(E,\psi)$ be a framed module. Then there is a framed submodule $(F,\psi_{F})$ such that for all framed submodules $(G,\psi')\subset(E,\psi)$ one has
$$\frac{P_{(F,\psi_{F})}}{\rk F}\ge\frac{P_{(G,\psi')}}{\rk G},$$
and in case of equality $(F,\psi_{F})\supset (G,\psi')$. Moreover, $(F,\psi_{F})$ is uniquely determined and $\s$-semistable.
\end{lem}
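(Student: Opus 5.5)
This is the framed analogue of the maximal destabilizing subsheaf in the Harder–Narasimhan filtration. I would prove it by a boundedness-plus-maximality argument, following \cite[Lemma 1.3]{HL95b}. Throughout, reduced Hilbert polynomials are compared lexicographically, that is, for large argument. Framed submodules are subbundles $G\subset E$ with the restricted framing $\psi_G=\psi|_G$; it suffices to consider saturated $G$, since saturating increases $P_G$ without changing the rank, and it can only keep or switch on the nonzero framing term in a way compatible with the comparison (as discussed below).

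\textbf{Step 1: existence of a maximal slope.} The reduced polynomials $P_{(G,\psi_G)}/\rk G$ differ from $P_G/\rk G$ by at most $\s/\rk G$, with $\rk G$ ranging over $1,\dots,r$. The degrees of subbundles of $E$ are bounded above, by the standard boundedness (Grothendieck) for subsheaves of a fixed bundle. Hence the set of values $\{P_{(G,\psi_G)}/\rk G\}$ is bounded above. It is also discrete, since its values lie in $\frac{1}{r!}\zz$-type lattices shifted by finitely many multiples of $\s$. So the maximum $\mu_{\max}$ is attained.

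\textbf{Step 2: a unique maximal element.} Among the subbundles attaining $\mu_{\max}$, choose $F$ of maximal rank. To show every $G$ attaining $\mu_{\max}$ lies in $F$, consider the exact sequence $0\to F\cap G\to F\oplus G\to F+G\to 0$.

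The key inequality is additivity up to the framing term: $\delta(\psi_{F+G})+\delta(\psi_{F\cap G})\ge\delta(\psi_F)+\delta(\psi_G)$ is false in general, so instead I use $\delta(\psi_{F+G})\ge\max(\delta_F,\delta_G)$ and $\delta(\psi_{F\cap G})\le\min(\delta_F,\delta_G)$. These give
\[
P_{(F+G,\psi_{F+G})}+P_{(F\cap G,\psi_{F\cap G})}\ge P_{(F,\psi_F)}+P_{(G,\psi_G)},
\]
which is the inequality in the correct direction, because a framing only subtracts $\s$.

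Since $P_{(F\cap G)}/\rk(F\cap G)\le\mu_{\max}$ (or $F\cap G=0$), it follows that $P_{(F+G)}/\rk(F+G)\ge\mu_{\max}$, hence equality holds. Passing to the saturation of $F+G$ keeps the value at least $\mu_{\max}$, since the degree rises while the framing term is unchanged or only as bad. By maximality of rank, the saturation of $F+G$ equals $F$, so $G\subset F$.

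Uniqueness follows, since two such $F$ contain each other.

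\textbf{Step 3: semistability.} Subbundles of $F$ are framed submodules of $E$ with the same restricted framing, so each has reduced polynomial at most $\mu_{\max}=P_{(F,\psi_F)}/\rk F$. This is precisely the $\s$-semistability of $(F,\psi_F)$.

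\textbf{Main obstacle.} The delicate point is the framing bookkeeping in Step 2, namely checking that $\delta$ behaves sub/supermodularly in the right direction. I would verify it by the four cases of whether $\psi_F$ and $\psi_G$ vanish.
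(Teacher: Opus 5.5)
Your argument is correct in substance, but it follows a different route from the paper. The paper gives no details and simply transplants the proof of \cite[Lemma 1.3.5]{HL10}. That argument orders subsheaves by $F_1\le F_2$ iff $F_1\subset F_2$ and the reduced polynomial does not decrease, and uses only the ascending chain condition to produce $\le$-maximal elements. It then takes a $\le$-maximal subsheaf of minimal rank and derives a contradiction with any offending $G$ through $0\to F\cap G\to F\oplus G\to F+G\to 0$. You instead first show that a maximal value $\mu_{\max}$ is attained, using bounded degrees of subbundles and discreteness of the values $(\deg G-\delta(\psi_G)\s)/\rk G$. You then take a maximizer of maximal rank and show directly that maximizers are closed under saturated sums.

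Your route is more direct. It also isolates explicitly the one new ingredient the framed setting needs, the supermodularity $P_{(F+G,\psi_{F+G})}+P_{(F\cap G,\psi_{F\cap G})}\ge P_{(F,\psi_F)}+P_{(G,\psi_G)}$. The paper leaves this implicit, although the Huybrechts--Lehn argument needs it just as much. The price is that you need an a priori upper bound. On a curve this is elementary, but in higher dimension (the paper remarks that Section 2 holds in any dimension) it would require Grothendieck's boundedness lemma, whereas the order-relation argument uses only Noetherianity.

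Two bookkeeping slips should be fixed.

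First, the inequality you actually need is $\delta(\psi_{F+G})+\delta(\psi_{F\cap G})\le\delta(\psi_F)+\delta(\psi_G)$. For that you need the upper bound $\delta(\psi_{F+G})\le\max(\delta(\psi_F),\delta(\psi_G))$, not the lower bound you wrote; from a lower bound on $\delta(\psi_{F+G})$ nothing follows. The upper bound holds, in fact with equality, because $F\oplus G\to F+G$ is surjective, so $\psi|_{F+G}=0$ iff $\psi|_F=0$ and $\psi|_G=0$.

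Second, your reduction to saturated subsheaves would fail if saturation could ``switch on'' the framing. In that case the value would drop by $\s/\rk G$ while the degree rises only by the torsion length, which need not compensate. The correct reason the reduction works is that this never happens: if $\psi|_G=0$, then $\psi|_{G^{\mathrm{sat}}}$ factors through the torsion sheaf $G^{\mathrm{sat}}/G$, hence vanishes because $\cH$ is locally free. With these corrections the proof goes through.
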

\begin{proof}
The proof is the same as the standard argument of the proof of \cite[Lemma 1.3.5]{HL10} by using the reduced Hilbert polynomials, stabilities and an order relation of framed modules instead of the usual ones.
\end{proof}

Such a framed submodule $(F,\psi_{F})$ of $(E,\psi)$ in Lemma \ref{max dest} is called {\bf the maximal destabilizing framed submodule of $(E,\psi)$}.

\begin{prop}\label{HN filt}
Assume that a framed module $(E,\psi)$ is not $\s$-semistable. Then there exists a unique Harder-Narasimhan filtration of framed submodules
$$(0,0)\subset(E_{1},\psi_{E_{1}})\subset\cdots\subset(E_{l-1},\psi_{E_{l-1}})\subset(E_{l},\psi_{E_{l}})=(E,\psi)$$
such that $(E_{i}/E_{i-1},\psi_{E_{i}/E_{i-1}})$ are $\s$-semistable and
$$\frac{P_{(E_{i}/E_{i-1},\psi_{E_{i}/E_{i-1}})}}{\rk E_{i}-\rk E_{i-1}}>\frac{P_{(E_{i+1}/E_{i},\psi_{E_{i+1}/E_{i}})}}{\rk E_{i+1}-\rk E_{i}}\text{ for }i=1,\dots,l-1.$$
\end{prop}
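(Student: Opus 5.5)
We build the filtration inductively from Lemma \ref{max dest}, as in the classical argument of \cite[Theorem 1.3.4]{HL10}, and prove uniqueness with the vanishing statement of Lemma \ref{vanishing hom}. Throughout we use the convention fixed for induced framings on subbundles and quotients. In particular the additivity $P_{(E,\psi)}=P_{(F,\psi_{F})}+P_{(G,\psi_{G})}$ lets us compare slopes of a subobject, its quotient, and the whole object exactly as for ordinary sheaves. Note that if $\psi_F\ne0$ the quotient carries the zero framing, and otherwise it carries the induced one.

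\emph{Existence.} Let $(E_1,\psi_{E_1})$ be the maximal destabilizing framed submodule of $(E,\psi)$ given by Lemma \ref{max dest}. It is $\s$-semistable, and it is a proper submodule because $(E,\psi)$ is not semistable. Since it is maximal for inclusion among submodules of maximal slope, $E/E_1$ is torsion-free, and hence a subbundle on the curve.

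Pass to the quotient $(E/E_1,\psi_{E/E_1})$ and take its maximal destabilizing framed submodule $(E_2/E_1,\cdot)$, pulling it back to $E_2\subset E$. Iterating terminates because the rank strictly increases at each step.

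We must check two things:
\begin{itemize}
\item the induced framing on $E_2/E_1$ viewed as a subquotient of $E$ agrees with its framing as a submodule of $(E/E_1,\psi_{E/E_1})$;
\item the strict slope inequality between consecutive factors holds.
\end{itemize}

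For the first point, compare the two framings. If $\psi_{E_1}\neq0$, all later subquotients get the zero framing under both descriptions. If $\psi_{E_1}=0$, then $\psi$ descends to $E/E_1$ and the framings agree.

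For the second point, argue by contradiction. If $\mu(E_2/E_1)\ge\mu(E_1)$ (slopes meaning reduced framed Hilbert polynomials), additivity gives $\mu(E_2)\ge\mu(E_1)$ with $E_2\supsetneq E_1$. This contradicts the maximality of $E_1$ in Lemma \ref{max dest}.

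\emph{Uniqueness.} Suppose $(E'_\bullet)$ is another such filtration. We show $E'_1=E_1$ and then induct on the quotient $E/E_1$.

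First, $\mu(E_1)\ge\mu(E'_1)$ by maximality. Let $j$ be minimal with $E_1\subset E'_j$. Then the composite $E_1\to E'_j/E'_{j-1}$ is a nonzero framed homomorphism; it is compatible with framings up to a scalar $w$ because the framings are induced from $\psi$. Both source and target are semistable, so Lemma \ref{vanishing hom} gives $\mu(E_1)\le\mu(E'_j/E'_{j-1})\le\mu(E'_1)$. Hence the slopes are equal and $j=1$, so $E_1\subset E'_1$. Equality of slopes together with the containment clause of Lemma \ref{max dest} then forces $E'_1\subset E_1$.

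\emph{Main obstacle.} The delicate step is the framing bookkeeping. We must verify that the maps between subquotients really are homomorphisms of framed modules in the sense defined earlier, i.e.\ that a scalar $w$ exists with $\psi'\circ h=w\psi$. We must also check that the framings induced on $E_i/E_{i-1}$ do not depend on whether they are computed in $E$ or in $E/E_{i-1}$. The key observation is that at most one subquotient in the whole filtration carries a nonzero framing, namely the first one on which $\psi$ is nonzero. So in each case either $w=0$ works or the framings are literally induced from $\psi$.
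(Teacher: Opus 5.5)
Your proposal is correct and is essentially the paper's proof. The paper says only to run the classical argument following \cite[Lemma 1.3.5]{HL10} with framed reduced Hilbert polynomials: existence comes from iterating the maximal destabilizing framed submodule of Lemma \ref{max dest}, and uniqueness from the Hom-vanishing of Lemma \ref{vanishing hom}. Your framing bookkeeping supplies details the paper leaves implicit (only the first subquotient on which $\psi$ is nonzero carries a nonzero framing, so the comparison maps are framed homomorphisms with $w\in\{0,1\}$), and your claim that $E/E_1$ is torsion-free holds because $\cH$ is torsion-free, so saturating a submodule on which $\psi$ vanishes keeps $\psi$ zero.
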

\begin{proof}
The proof is the same as the argument below the proof of \cite[Lemma 1.3.5]{HL10} by using the reduced Hilbert polynomials and stabilities of framed modules instead of the usual ones combined with Lemmas \ref{vanishing hom} and \ref{max dest}.
\end{proof}

\subsection{Framed Hitchin pairs}\label{framed Hitchin pairs}

Let $L$ and $\cH$ be vector bundles on $X$.

{\bf A framed Hitchin pair of type $(r,d,L,\cH)$ on $X$} is a quadruple $(E,\ve,\phi,\psi)$ which consists of the following ingredients
\begin{itemize}
\item a vector bundle $E$ of rank $r$ and degree $d$ on $X$;
\item a complex number $\ve$;
\item a homomorphism $\phi:E\to E\otimes L$, called the $L$-twisted Higgs field of $E$;
\item a framing $\psi:E\to\cH$
\end{itemize}
such that $\phi$ is not nilpotent, i.e., $(\phi\otimes\id_{L^{\otimes i-1}})\circ\cdots\circ\phi\ne0$ for all $i\in\nn$, when $\ve=0$  (see \cite[Section 1]{Sch00}). {\bf A constrained framed Hitchin pair on $X$} is a framed Hitchin pair $(E,\ve,\phi,\psi)$ with $\phi\in\End((E,\psi))$.

Let $(E,\ve,\phi,\psi)$ and $(E',\ve',\phi',\psi')$ be framed Hitchin pairs of type $(r,d,L,\cH)$ and $(r',d',L,\cH')$ respectively. {\bf A homomorphism $h:(E,\ve,\phi,\psi)\to(E',\ve',\phi',\psi')$ of framed Hitchin pairs} is a homomorphism of the underlying bundles $h:E\to E'$ for which there are elements $z,w\in\cc$ such that $\ve'=z\ve,\ \phi'\circ h=z((h\otimes\id_L)\circ \phi),\ \textrm{and}\ \psi'\circ h=w\psi$. The set $\Hom((E,\ve,\phi,\psi),(E',\ve,\phi',\psi'))$ of homomorphisms of framed Hitchin pairs is a linear subspace of $\Hom(E,E')$. If {\bf $h:(E,\ve,\phi,\psi)\to(E',\ve,\phi',\psi')$ is an isomorphism}, then it is an isomorphism of the underlying bundles $h:E\to E'$ and the factors $z,w$ are taken in $\cc^{*}$.

Fix a positive $\s\in\qq$. A framed Hitchin pair $(E,\ve,\phi,\psi)$ is {\bf$\s$-(semi)stable}, if for any proper, nonzero subbundle $F$ of $E$ which is $\phi$-invariant,
$$\frac{P_{(F,\psi_{F})}}{\rk F}<(\le)\frac{P_{(E,\psi)}}{\rk E}.$$

Let $\bFH^{\s-ss}(r,d,L,\cH)$ be the moduli space of $\s$-semistable framed Hitchin pairs of type $(r,d,L,\cH)$ with nonzero framings on $X$. $\bFH^{\s-ss}(r,d,L,\cH)$ was constructed in \cite{Sch00}. $\bFH^{\s-s}(r,d,L,\cH)$ denotes the stable locus of $\bFH^{\s-ss}(r,d,L,\cH)$.

\begin{rem}
The construction of $\bFH^{\s-ss}(r,d,L,\cH)$ in \cite{Sch00} has a generality. The moduli space parametrizes objects with underlying torsion free coherent sheaves on a smooth complex projective scheme, where $\s$ is a rational polynomial with a positive leading coefficient.
\end{rem}

Now we mention that a numerical property of $\s$ holds for any $\s$-semistabie framed Hitchin pair $(E,\ve,\phi,\psi)$ with $\psi\neq 0$ and $\phi$-invariant nonzero $\ker\psi$ as follows.

\begin{prop}\label{numerical property}
Assume that a framed Hitchin pair $(E,\ve,\phi,\psi)$ with $\psi\neq 0$ and $\ker\psi\ne0$ is $\s$-semistable and that $\ker\psi$ is $\phi$-invariant. Then
$$\s\le \deg E-\frac{\rk E}{\rk\ker\psi}(\deg E-\deg\cH).$$
\end{prop}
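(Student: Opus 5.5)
The plan is to test the $\s$-semistability of $(E,\ve,\phi,\psi)$ against the single subsheaf $F:=\ker\psi$. The hypothesis that $\ker\psi$ is $\phi$-invariant is there exactly so that $F$ is an admissible test object.

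\textbf{Step 1: $F$ is an admissible test subbundle.} On the smooth curve $X$, the quotient $E/F\cong\im\psi$ is a subsheaf of the vector bundle $\cH$. It is therefore torsion free, hence locally free, so $F$ is a subbundle of $E$. It is proper because $\psi\ne0$, nonzero by hypothesis, and $\phi$-invariant by hypothesis. By construction $\psi_{F}=\psi|_{F}=0$, so $P_{(F,\psi_{F})}=P_{F}$. Since $\psi\neq0$, we have $P_{(E,\psi)}=P_{E}-\s$.

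\textbf{Step 2: translate semistability into a degree inequality.} Semistability gives
$$\frac{P_{F}}{\rk F}\le\frac{P_{E}-\s}{\rk E}.$$
By Riemann--Roch, $P_{V}(m)=\rk V\cdot m\deg\cO_X(1)+\deg V+\rk V(1-g)$ for any vector bundle $V$. After dividing by ranks, the leading terms and the $(1-g)$ terms agree on both sides. Comparing the constant terms therefore yields
$$\frac{\deg F}{\rk F}\le\frac{\deg E-\s}{\rk E},\qquad\text{that is,}\qquad \s\le\deg E-\frac{\rk E}{\rk F}\deg F .$$

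\textbf{Step 3: bound $\deg F$ from below.} From the exact sequence $0\to F\to E\to\im\psi\to0$ we get $\deg F=\deg E-\deg\im\psi$. The claimed inequality then follows from Step 2 once we know
$$\deg\im\psi\le\deg\cH .$$
If $\im\psi$ has full rank in $\cH$, this holds because $\cH/\im\psi$ is torsion, with nonnegative length. This covers in particular the case where $\cH$ is a line bundle, which is the case $\cH=\cO_X$ used in Section \ref{application}.

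I expect this last inequality to be the main obstacle. For a general $\cH$, a subsheaf of smaller rank may have larger degree. My first attempt would be to check whether the setting implicitly gives full rank or surjectivity of $\psi$ generically. Failing that, I would replace $\deg\cH$ by $\deg\im\psi$, or by the maximal degree of a subsheaf of $\cH$, and note that the stated form follows whenever $\cH/\im\psi$ is torsion.
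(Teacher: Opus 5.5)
Your argument is the paper's argument. You test semistability on $\ker\psi$ to get $\s\le\deg E-\frac{\rk E}{\rk\ker\psi}\deg\ker\psi$, rewrite $\deg\ker\psi=\deg E-\deg\im\psi$, and then use $\deg\im\psi\le\deg\cH$. Your Steps 1 and 2 are correct, and they supply details the paper leaves implicit: that $\ker\psi$ is saturated, and the Riemann--Roch reduction to degrees. The paper itself simply writes the final inequality $\deg\im\psi\le\deg\cH$ without justification.

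The obstacle you flag in Step 3 is genuine. It is not a defect of your write-up but of the statement. Nothing in the definitions forces $\psi$ to be generically surjective, and for $\rk\cH\ge2$ the proposition is false. Here is a counterexample:
\begin{itemize}
\item Let $B$, $C$ be line bundles with $\deg B=2$ and $\deg C=-100$, and put $E=\cO_{X}\oplus B$ and $\cH=B\oplus C$.
\item Let $\psi$ be the projection onto $B$ followed by the inclusion $B\hookrightarrow\cH$.
\item Take $\ve=1$ and $\phi=0$. Then every subbundle is $\phi$-invariant, and $\phi\in\End((E,\psi))$.
\end{itemize}
With $\s=2$ and the reduction of your Step 2, semistability asks that $\frac{\deg F-\delta(\psi_F)\s}{\rk F}\le\frac{\deg E-\s}{2}=0$ for every line subbundle $F$. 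The subbundle $\ker\psi=\cO_{X}$ gives $0\le 0$. Any other line subbundle $F$ maps nontrivially to $B$, so $\deg F\le 2$ and $\psi_F\ne0$, which gives $\deg F-\s\le0$. Hence the pair is $2$-semistable, with $\psi\ne0$ and $\ker\psi\ne0$ $\phi$-invariant. However, $\deg E-\frac{\rk E}{\rk\ker\psi}(\deg E-\deg\cH)=2-2\cdot 100=-198<\s$.

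So you should not try to close this step in general. What your argument, and the paper's, actually proves is the following:
\begin{itemize}
\item The bound $\s\le\deg E-\frac{\rk E}{\rk\ker\psi}(\deg E-\deg\im\psi)$ holds always.
\item The stated bound holds under the extra hypothesis that $\cH/\im\psi$ is torsion. This covers $\cH$ a line bundle, in particular the case $\cH=\cO_{X}$ used in Section \ref{application}.
\end{itemize}
That hypothesis should be added to the statement.
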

\begin{proof}
By the assumption, we have $$\dss\frac{\deg\ker\psi}{\rk\ker\psi}\le\frac{\deg E-\s}{\rk E}.$$ Then
\begin{align*}
\s & \le \deg E-\frac{\rk E}{\rk\ker\psi}\deg\ker\psi=\deg E-\frac{\rk E}{\rk\ker\psi}(\deg E-\deg\im\psi)\\
    & \le \deg E-\frac{\rk E}{\rk\ker\psi}(\deg E-\deg\cH).
\end{align*}
\end{proof}

\subsection{Oriented framed modules and oriented framed Hitchin pairs}
Fix $L$ and $\cH$ as in the previous section. Let $\cN$ be a Poincar\'{e} line bundle on $\Pic(X)\times X$.

{\bf An oriented framed module of type $(r,d,\cH,\cN)$ on $X$} is a triple $(E,\delta,\psi)$ which consists of the following ingredients
\begin{itemize}
\item a vector bundle $E$ of rank $r$ and degree $d$ on $X$;
\item a homomorphism $\delta\colon  \det E\to \cN[E]$;
\item a framing $\psi\colon E\to\cH$
\end{itemize}
(see \cite[Section 2.1]{OST99}).

An {\bf oriented framed Hitchin pair of type $(r,d,L,\cH,\cN)$ on $X$} is a quintuple $(E,\ve,\delta, \phi,\psi)$ which consists of the following ingredients
\begin{itemize}
\item a vector bundle $E$ of rank $r$ and degree $d$ on $X$;
\item a complex number $\ve$;
\item a homomorphism $\delta\colon  \det E\to \cN[E]$;
\item a $L$-twisted Higgs field $\phi\colon E\to E\otimes L$;
\item a framing $\psi\colon E\to\cH$
\end{itemize}
such that $\phi$ is not nilpotent when $\ve=0$ (see \cite[Section 2]{Sch00}). {\bf A constrained oriented framed Hitchin pair on $X$} is an oriented framed Hitchin pair $(E,\ve,\delta, \phi,\psi)$ with $\phi\in\End((E,\psi))$.

{\bf An isomorphism $h\colon(E,\ve,\delta, \phi, \psi)\to(E',\ve',\delta', \phi', \psi')$ of oriented framed Hitchin pairs of type $(r,d,L,\cH,\cN)$ on $X$} is an isomorphism of the underlying bundles $h\colon E\to E'$ for which there are elements $z, w\in\cc^{*}$ such that 
\[
\ve'=z\ve,\ \delta'\circ \det h=w^r\delta,\ \phi'\circ h=z((h\otimes\id_L)\circ \phi),\ \textrm{and}\ \psi'\circ h=w\psi.
\]

Denote $\dss\s_{E,\phi,\psi}:=P_E-\frac{\rk E}{\rk K_{\textrm{max}}}P_{K_{\textrm{max}}}$ (see \cite[Lemma 2.2]{Sch00} for $K_{\textrm{max}}$). We denote $\dss\s_{E,\phi,\psi}$ by $\dss\s_{E,\psi}$ when $\phi=0$.

An oriented framed module $(E,\delta,\psi)$ is {\bf semistable} if either $\psi$ is injective, or $\delta$ is an isomorphism, $\ker\psi\ne0$, $\s_{E,\psi}\ge0$ and for any nontrivial subbundle $F\subset E$
\[
\frac{P_F}{\rk F}-\frac{\s_{E,\psi}}{\rk F}\le \frac{P_E}{\rk E}-\frac{\s_{E,\psi}}{\rk E}.
\]

An oriented framed module $(E,\delta,\psi)$ is {\bf stable} if either $\psi$ is injective, or $\delta$ is an isomorphism, $\ker\psi\ne0$, $\s_{E,\psi}>0$ and one of the following two possibilities holds:
\begin{itemize}
	\item For any nontrivial proper subbundle $F\subset E$ $$\frac{P_F}{\rk F}-\frac{\s_{E,\psi}}{\rk F}< \frac{P_E}{\rk E}-\frac{\s_{E,\psi}}{\rk E}.$$
	\item $\psi\neq0$, $(E,\psi)$ splits as $(K_{\textrm{max}},0)\oplus(E',\psi)$ such that $K_{\textrm{max}}$ is stable and $(E',\psi)$ is a $\s_{E,\psi}$-stable framed module with
	$$\dss\frac{P_{K_{\textrm{max}}}}{\rk K_{\textrm{max}}}=\frac{P_{E'}-\s_{E,\psi}}{\rk E'}.$$
\end{itemize}

An oriented framed Hitchin pair $(E,\ve,\delta,\phi,\psi)$ is {\bf semistable} if either there is no $\phi$-invariant subbundles in $\ker \psi$, or $\delta$ is an isomorphism and there are $\phi$-invariant subbundles in $\ker \psi,\ \s_{E,\phi,\psi}\ge0$ and for any nontrivial $\phi$-invariant subbundle $F\subset E$
\[
\frac{P_F}{\rk F}-\frac{\s_{E,\phi,\psi}}{\rk F}\le \frac{P_E}{\rk E}-\frac{\s_{E,\phi,\psi}}{\rk E}.
\]

An oriented framed Hitchin pair $(E,\ve,\delta,\phi,\psi)$ is {\bf stable} if either there is no $\phi$-invariant subbundles in $\ker \psi$, or $\delta$ is an isomorphism and there are $\phi$-invariant subbundles in $\ker \psi,\ \s_{E,\phi,\psi}>0$, and one of the following two possibilities holds:
\begin{itemize}
	\item For any nontrivial $\phi$-invariant proper subbundle $F\subset E$ $$\frac{P_F}{\rk F}-\frac{\s_{E,\phi,\psi}}{\rk F}< \frac{P_E}{\rk E}-\frac{\s_{E,\phi,\psi}}{\rk E}.$$
	\item $\psi\neq0$, $(E,\ve,\phi,\psi)$ splits as $(K_{\textrm{max}},\ve,\phi|_{K_{\textrm{max}}},0)\oplus(E',\ve,\phi|_{E'},\psi)$ such that $(K_{\textrm{max}},\ve,\phi|_{K_{\textrm{max}}})$ is a stable Hitchin pair and $(E',\ve,\phi|_{E'},\psi)$ is a $\s_{E,\phi,\psi}$-stable framed Hitchin pair with
	$$\dss\frac{P_{K_{\textrm{max}}}}{\rk K_{\textrm{max}}}=\frac{P_{E'}-\s_{E,\phi,\psi}}{\rk E'}.$$
\end{itemize}

Let $\bOFH^{ss}(r,d,L,\cH,\cN)$ (resp. $\bOFM^{ss}(r,d,\cH,\cN)$) be the moduli space of semistable oriented framed Hitchin pairs of type $(r,d,L,\cH,\cN)$ (resp. semistable oriented framed modules of type $(r,d,\cH,\cN)$) on $X$. $\bOFM^{ss}(r,d,\cH,\cN)$ was constructed in \cite{OST99} and $\bOFH^{ss}(r,d,L,\cH,\cN)$ was constructed in \cite{Sch00}. The stable locus of $\bOFH^{ss}(r,d,L,\cH,\cN)$ (resp. $\bOFM^{ss}(r,d,\cH,\cN)$) is denoted by $\bOFH^{s}(r,d,L,\cH,\cN)$ (resp. $\bOFM^{s}(r,d,\cH,\cN)$).

\begin{rem}
The construction of $\bOFM^{ss}(r,d,\cH,\cN)$ in \cite{OST99} has a generality. The moduli space parametrizes objects with underlying torsion free coherent sheaves on a smooth complex projective variety. However, the construction of $\bOFH^{ss}(r,d,L,\cH,\cN)$ in \cite{Sch00} has a constraint. The moduli space parametrizes objects with underlying vector bundles on a smooth complex projective curve.
\end{rem}

\section{$\cc^{*}$-flips of moduli spaces}\label{flips}
In this section we deal with birational geometries of moduli spaces of framed modules and those of framed Hitchin pairs. See \cite[Section 2]{Sch00} for details. 

Let $\qq_{+}$ be the set of all positive rational numbers. Assume that $(E,\ve,\delta,\phi,\psi)$ is not stable but semistable. Then either there is a $\phi$-invariant subbundle in $\ker\psi$ which destabilizes $(E, \ve, \phi)$ as a Hitchin pair or there are a $\phi$-invariant subbundle $K$ of $\ker\psi$ and a $\phi$-invariant subbundle $F\not\subset\ker\psi$ with
$$\frac{P_F}{\rk F}-\frac{\s_{K}}{\rk F}=\frac{P_E}{\rk E}-\frac{\s_{K}}{\rk E},$$
where $\dss\s_{K}:=P_E-\frac{\rk E}{\rk K}P_K$. Let $Q_{\dest}$ be the subset of $\qq_{+}$ of such $\s_{K}$. Fix $\s_{\infty}\in\qq_{+}$ for which the conclusion of \cite[Proposition 2.9]{Sch00} holds. Then the set $Q_{\dest}\cap\{\s\in\qq_{+}\,|\,\s\le\s_{\infty}\}$ is finite (cf. \cite[Lemma 2.10]{Sch00}).

Let $\s'_{1}<\cdots<\s'_{t}$ be the rational numbers in $Q_{\dest}\cap\{\s\in\qq_{+}\,|\,\s<\s_{\infty}\}$. These give rise to chambers of wall-crossings $I_{0}:=\{\s\,|\,\s<\s'_{1}\}$, $I_{i}:=\{\s\,|\,\s'_{i}<\s<\s'_{i+1}\}$ for $i=1,\dots,t-1$ and $I_{t}:=\{\s\,|\,\s'_{t}<\s\}$. Pick $\s_{i}\in I_{i}$ for each $i\in\{0,\dots,t\}$ and let $\bFH^{\s_{i}-ss}:=\bFH^{\s_{i}-ss}(r,d,L,\cH)$ (resp. $\bFH^{\s_{i}-s}:=\bFH^{\s_{i}-s}(r,d,L,\cH)$). Then we have a diagram of a chain of variations of moduli spaces
\begin{equation}\label{chain of flips}
{\small\xymatrix{\bFH^{\s_{0}-s}\ar[dr]_{G_{1}^{-}}\ar[d]_{\iota_{\bM}}&&\bFH^{\s_{1}-s}\ar[dl]^{G_{1}^{+}}&\cdots&\bFH^{\s_{t-1}-s}\ar[dr]_{G_{t}^{-}}&&\bFH^{\s_{t}-s}\ar[dl]^{G_{t}^{+}}\\
\bM(r,d,L)&\bFH^{\s'_{1}-ss}&&&&\bFH^{\s'_{t}-ss}&}}
\end{equation}
where $\bM(r,d,L)$ denotes the projective moduli space of semistable Hitchin pairs $(E,\ve,\phi\colon E\to E\otimes L)$ of rank $r$ and degree $d$ on $X$ constructed in \cite{Sch98}, $\iota_{\bM}\colon \bFH^{\s_{0}-s}\to\bM(r,d,L)$ is the forgetful morphism given by $(E,\ve,\phi,\psi)\mapsto(E,\ve, \phi)$, $G_{i}^{-}\colon \bFH^{\s_{i-1}-s}\to\bFH^{\s'_{i}-ss}$ (resp. $G_{i}^{+}\colon \bFH^{\s_{i}-s}\to\bFH^{\s'_{i}-ss}$) is the proper birational morphism changing from $\s_{i-1}$-stability to $\s'_{i}$-stability (resp. from $\s_{i}$-stability to $\s'_{i}$-stability) in the sense of \cite[Theorem 3.3]{Tha96}.

\begin{rem}
It is easy to see that the fiber of $\iota_{\bM}\colon \bFH^{\s_{0}-s}\to\bM(r,d,L)$ over the stable locus $\bM(r,d,L)^{s}$ is $\pp(\Hom(E,\cH)^{\vee})$ by the definition of $\s$-semistability and the condition $\s_{0}<\s'_{1}$.
\end{rem}

Consider the $\cc^{*}$-action $\lambda$ on $\bOFH^{ss}(r,d,L,\cH,\cN)$ given by
\begin{equation}\label{C*-action}
\lambda(z)\cdot[(E,\ve,\delta,\phi,\psi)]:=[(E,\ve,\delta,\phi,z\psi)]=[(E,\ve,z^{-r}\delta,\phi,\psi)].
\end{equation}
As in \cite[Section 1]{OST99}, $\lambda$ can be modified to a family of $\cc^{*}$-actions $\lambda_{e}^{k}$ on $\bOFH^{ss}(r,d,L,\cH,\cN)$ for $e,k\in\zz$ with $0<e\le k$.

\begin{prop}[{\cite[Theorem 2.20]{Sch00}}]\label{cc star flips}
For $e,k\in\zz$ with $0<e\le k$, let $l_{k}^{e}$ be the linearization of the $\cc^{*}$-action $\lambda_{e}^{k}$ on $\bOFH^{ss}(r,d,L,\cH,\cN)$. Then the GIT-quotients $\bOFH^{ss}(r,d,L,\cH,\cN)/\!/_{l_{k}^{e}}\cc^{*}$ run through the moduli spaces $\bM(r,d,L)$ and $\bFH^{\s-ss}$ for $\s\in\qq_{+}$. In particular, the chain of variations of moduli spaces (\ref{chain of flips}) is a chain of $\cc^{*}$-flips in the sense of \cite[Theorem 3.3]{Tha96}.
\end{prop}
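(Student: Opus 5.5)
The plan is to follow the strategy of \cite{OST99} for oriented framed modules, adapted to the Hitchin-pair setting as in \cite{Sch00}, and then apply Thaddeus' theory of variation of GIT for $\cc^{*}$-actions \cite[Theorem 3.3]{Tha96}.

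\textbf{Step 1: the ambient space and its fixed points.} I would first record that $\bOFH^{ss}(r,d,L,\cH,\cN)$ is projective. I would then show that the action (\ref{C*-action}), modified to $\lambda_{e}^{k}$ as in \cite[Section 1]{OST99}, extends to the ample line bundles obtained by twisting the natural linearization by the characters indexed by $(e,k)$. Next I would describe the $\cc^{*}$-fixed locus. Since scaling $\psi$ is equivalent to scaling $\delta$ by $z^{-r}$, a fixed point must have $\psi=0$ or $\delta=0$, up to a splitting of the underlying pair.
\begin{itemize}
\item The component with $\psi=0$ and $\delta$ an isomorphism is identified with the moduli $\bM(r,d,L)$ of semistable Hitchin pairs.
\item The other components parametrize split objects $(K,\ve,\phi|_{K},0)\oplus(E',\ve,\phi|_{E'},\psi)$, with $K$ a $\phi$-invariant subbundle of $\ker\psi$. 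Such a component determines the value $\s_{K}=P_{E}-\frac{\rk E}{\rk K}P_{K}$.
\end{itemize}

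\textbf{Step 2: Hilbert--Mumford for $\cc^{*}$.} For a $\cc^{*}$-action on a projective variety, a point $x$ is semistable for a linearization exactly when the weights of the linearization at the two limits $\lim_{z\to0}\lambda(z)x$ and $\lim_{z\to\infty}\lambda(z)x$ have opposite signs (or one of them is zero). I would compute these limits for $x=[(E,\ve,\delta,\phi,\psi)]$.
\begin{itemize}
\item In one direction, $\psi\to0$, and the limit lies in the $\bM(r,d,L)$ component.
\item In the other direction, $\delta\to0$. Here one must renormalize by a one-parameter subgroup of $\SL$ that degenerates $E$ along the filtration $K\subset E$, with $K$ the maximal $\phi$-invariant subbundle of $\ker\psi$ (the $K_{\max}$ of \cite[Lemma 2.2]{Sch00}). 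The limit is then the split object attached to $\s_{K}$.
\end{itemize}
The weights should be affine in $e/k$ and in the $\s_{K}$ of the limiting components. The sign condition should then be equivalent to the following: for all $\phi$-invariant $F$, the slope inequality $\frac{P_{(F,\psi_F)}}{\rk F}\le\frac{P_{(E,\psi)}}{\rk E}$ holds with $\s=\s(e,k)$, where $\s(e,k)$ is an explicit increasing function of $e/k$. At the extreme end, the condition collapses to semistability of the Hitchin pair with $\psi$ forgotten.

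\textbf{Step 3: identifying the quotients.} From Step 2 I would conclude the following.
\begin{itemize}
\item For $\s(e,k)\in\qq_{+}$, the GIT quotient $\bOFH^{ss}/\!/_{l_{k}^{e}}\cc^{*}$ is $\bFH^{\s-ss}$. One passes to the quotient by the $\cc^{*}$ rescaling $\delta$ and normalizes $\delta$ to be an isomorphism, which removes the orientation data.
\item For the extreme value, the quotient is $\bM(r,d,L)$.
\item The values of $e/k$ at which the semistable set jumps are exactly those where some fixed component has weight zero, that is $\s(e,k)=\s_{K}\in Q_{\dest}$. By \cite[Lemma 2.10]{Sch00} only finitely many such walls lie below $\s_{\infty}$.
\end{itemize}
Between consecutive walls the quotient is constant, and at a wall it equals $\bFH^{\s'_{i}-ss}$. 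Thaddeus' \cite[Theorem 3.3]{Tha96} then gives the proper birational morphisms $G_{i}^{\pm}$ as a $\cc^{*}$-flip, so (\ref{chain of flips}) is a chain of $\cc^{*}$-flips.

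\textbf{Main obstacle.} I expect the hardest part to be the weight computation at the $\delta\to0$ limit in Step 2. There one has to track the renormalizing one-parameter subgroup and show that the resulting inequality involves precisely $\s_{K}$ for $K=K_{\max}$. This is also what produces the special "split" case in the definition of stability.
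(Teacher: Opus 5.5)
\textbf{The gap: the $z\to 0$ limit in Step 2.} You claim that the $z\to0$ limit, where $\psi\to0$, lies in the component $\bM(r,d,L)$. That holds only when the underlying Hitchin pair $(E,\ve,\phi)$ is semistable.

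Otherwise $(E,\ve,\delta,\phi,0)$ is not even a point of $\bOFH^{ss}(r,d,L,\cH,\cN)$, because with $\psi=0$ one gets $\s_{E,\phi,0}<0$. In that case the limit is again a split fixed point. It is obtained by degenerating along a destabilizing $\phi$-invariant subbundle $F\not\subset\ker\psi$, and equals $(F,\ve,\phi|_{F},\psi|_{F})\oplus(E/F,\ve,\bar\phi,0)$. Its zero-framing summand is the quotient $E/F$, not a subbundle of $\ker\psi$.

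Such points occur throughout the chain. In rank $2$, suppose $E$ is unstable with maximal destabilizing line subbundle $F\not\subset\ker\psi$. Then $(E,\psi)$ can be $\s$-semistable only for $\s\ge 2\deg F-d$, and this threshold is exactly $\s_{K}$ for $K=E/F$ in that limit.

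\textbf{Consequence for your argument.} You attribute every nontrivial constraint to the $\delta\to0$ limit at $K_{\max}$. So your sign condition produces only $0\le\s\le\s_{E,\phi,\psi}$, which encodes just the inequalities for $\phi$-invariant subbundles inside $\ker\psi$. The inequalities for $\phi$-invariant $F\not\subset\ker\psi$ never appear. These are the ones that bound $\s$ from below and separate the chambers $I_{i}$. Hence the asserted equivalence of the sign condition with $\s$-semistability does not follow.

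To make the direct route work, you would need to do three things:
\begin{enumerate}
\item Identify both limits correctly; generically both are split fixed points.
\item Prove that each limit is attached to the worst destabilizer on its side, so that its weight is an endpoint of the point's $\s$-interval.
\item Upgrade the bijection on points, obtained by normalizing $\delta$, to an isomorphism of schemes.
\end{enumerate}

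Two smaller points. First, $K_{\max}$ is the $\phi$-invariant subsheaf of $\ker\psi$ of maximal reduced Hilbert polynomial, not the inclusion-maximal one; the latter would be $\ker\psi$ itself whenever $\ker\psi$ is $\phi$-invariant. Second, you drop the fixed component $\{\delta=0\}$, consisting of objects with no $\phi$-invariant subbundle in $\ker\psi$. This is where the $\delta\to0$ limit lands when $K_{\max}$ does not exist.

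\textbf{The route behind the citation.} The paper gives no argument and simply invokes \cite[Theorem 2.20]{Sch00}. The mechanism behind it is quotient in stages, visible where the paper uses it in Lemma~\ref{nonzero framing GIT construction}. The master space is itself an $\SL(V)$-quotient of a parameter scheme, denoted $\fT_{0}\cap\bt^{-1}(\bP_{1}\times\bP_{2})$ in the paper. So $\bOFH^{ss}(r,d,L,\cH,\cN)/\!/_{l_{k}^{e}}\cc^{*}$ is the $\SL(V)\times\cc^{*}$-quotient of that scheme. \cite[Theorem 2.15]{Sch00} identifies this quotient with $\bFH^{\s_{k}^{e}-ss}$ by a Hilbert--Mumford computation on the parameter scheme. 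This gives the identification of quotients as schemes without computing any orbit limits in the master space, and \cite[Theorem 3.3]{Tha96} then produces the flips.
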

By Proposition \ref{cc star flips}, we have
\begin{equation}\label{flips via framed Hitchin pairs}
{\tiny\xymatrix{&\tbFH_{1}^{-}\cong\tbFH_{1}^{+}\ar[dl]_{F_{1}^{-}}\ar[dr]^{F_{1}^{+}}&&&&\tbFH_{t}^{-}\cong\tbFH_{t}^{+}\ar[dl]_{F_{t}^{-}}\ar[dr]^{F_{t}^{+}}&\\
\bFH^{\s_{0}-s}\ar[dr]_{G_{1}^{-}}\ar[d]_{\iota_{\bM}}&&\bFH^{\s_{1}-s}\ar[dl]^{G_{1}^{+}}&\cdots&\bFH^{\s_{t-1}-s}\ar[dr]_{G_{t}^{-}}&&\bFH^{\s_{t}-s}\ar[dl]^{G_{t}^{+}}\\
\bM(r,d,L)&\bFH^{\s'_{1}-ss}&&&&\bFH^{\s'_{t}-ss}&}}
\end{equation}
where $F_{i}^{-}\colon \tbFH_{i}^{-}\to\bFH^{\s_{i-1}-s}$ is the blow-up along $(G_{i}^{-})^{-1}(\bFH^{\s'_{i}-ss}\setminus\bFH^{\s'_{i}-s})$, $F_{i}^{+}\colon \tbFH_{i}^{+}\to\bFH^{\s_{i}-s}$ is the blow-up along $(G_{i}^{+})^{-1}(\bFH^{\s'_{i}-ss}\setminus\bFH^{\s'_{i}-s})$ and $\tbFH_{i}^{-}\cong\tbFH_{i}^{+}$.

\begin{rem}\label{cc star flips via framed modules}
The same argument for $\bFM^{\s-ss}(r,d,\cH)$ and $\bOFM^{\s-ss}(r,d,\cH)$ gives the following diagram of $\cc^{*}$-flips
\begin{equation}\label{flips via framed modules}
{\tiny\xymatrix{&\tbFM_{1}^{-}\cong\tbFM_{1}^{+}\ar[dl]_{f_{1}^{-}}\ar[dr]^{f_{1}^{+}}&&&&\tbFM_{t}^{-}\cong\tbFM_{t}^{+}\ar[dl]_{f_{t}^{-}}\ar[dr]^{f_{t}^{+}}&\\
\bFM^{\s_{0}-s}\ar[dr]_{g_{1}^{-}}\ar[d]_{\iota_{\bU}}&&\bFM^{\s_{1}-s}\ar[dl]^{g_{1}^{+}}&\cdots&\bFM^{\s_{t-1}-s}\ar[dr]_{g_{t}^{-}}&&\bFM^{\s_{t}-s}\ar[dl]^{g_{t}^{+}}\\
\bU(r,d)&\bFM^{\s'_{1}-ss}&&&&\bFM^{\s'_{t}-ss}&}}
\end{equation}
where $\bU(r,d)$ denotes the moduli space of semistable vector bundles of the rank $r$ and degree $d$ on $X$, $\iota_{\bU}\colon \bFM^{\s_{0}-s}\to\bU(r,d)$ is the forgetful morphism given by $(E,\psi)\mapsto E$, $f_{i}^{-}\colon \tbFM_{i}^{-}\to\bFM^{\s_{i-1}-s}$ is the blow-up along $(g_{i}^{-})^{-1}(\bFM^{\s'_{i}-ss}\setminus\bFM^{\s'_{i}-s})$, $f_{i}^{+}\colon \tbFM_{i}^{+}\to\bFM^{\s_{i}-s}$ is the blow-up along $(g_{i}^{+})^{-1}(\bFM^{\s'_{i}-ss}\setminus\bFM^{\s'_{i}-s})$ and $\tbFM_{i}^{-}\cong\tbFM_{i}^{+}$. See \cite[Section 2.8]{OST99} for details. Here it is easy to see that the fiber of $\iota_{\bU}\colon \bFM^{\s_{0}-s}\to\bU(r,d)$ over the stable locus $\bU(r,d)^{s}$ is $\pp(\Hom(E,\cH)^{\vee})$ by the definition of $\s$-semistability and the condition $\s_{0}<\s'_{1}$.
\end{rem}

\section{Stabilities of $\cO_{X}$-twisted (oriented) framed Hitchin pairs}\label{stabilities}

Assume that $L=\cO_{X}$. In this section we relate stabilities between constrained (oriented) framed Hitchin pairs and (oriented) framed modules. We obtain the equivalence of $\s$-semistable ones for all cases of ranks. But for $\s$-stable ones, we give their equivalence only for the rank $2$ case.

From now on we may assume that the type of framed Hitchin pairs is $(r,d,\cO_{X},\cH)$ and $\dss\frac{d}{r}<\deg\cH$ by Proposition \ref{numerical property}.

\begin{thm}\label{ss FH equiv ss FM}
A framed Hitchin pair $(E,\ve,\phi,\psi)$ is $\s$-semistable with $\phi\in\End((E,\psi))$ if and only if a framed module $(E,\psi)$ is $\s$-semistable.
\end{thm}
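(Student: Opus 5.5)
The direction ($\Leftarrow$) is immediate. Suppose $(E,\psi)$ is a $\s$-semistable framed module. Every $\phi$-invariant proper nonzero subbundle $F\subset E$ is in particular a proper nonzero subbundle. Hence the inequality $P_{(F,\psi_F)}/\rk F\le P_{(E,\psi)}/\rk E$ required for $\s$-semistability of $(E,\ve,\phi,\psi)$ is part of the hypothesis. (The constraint $\phi\in\End((E,\psi))$ is assumed on the pair throughout, so it plays no role in this direction.)

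For ($\Rightarrow$), I argue by contradiction. Let $(E,\ve,\phi,\psi)$ be $\s$-semistable with $\phi\in\End((E,\psi))$; since $L=\cO_X$, $\phi$ is an honest endomorphism of $E$ with $\psi\circ\phi=w\psi$ for some $w\in\cc$. Suppose $(E,\psi)$ is not $\s$-semistable. Let $(F,\psi_F)$ be its maximal destabilizing framed submodule from Lemma \ref{max dest}, so that
$$\frac{P_{(F,\psi_F)}}{\rk F}>\frac{P_{(E,\psi)}}{\rk E}.$$
The plan is to show that $F$ is $\phi$-invariant. Then $F$ would contradict the $\s$-semistability of the framed Hitchin pair.

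To prove invariance, consider the composite $h\colon F\hookrightarrow E\xrightarrow{\phi}E\xrightarrow{q}G:=E/F$. First I check that $h$ is a homomorphism of framed modules $(F,\psi_F)\to(G,\psi_G)$, splitting into two cases.
\begin{itemize}
\item If $\psi_F\neq0$, then by convention $\psi_G=0$, so $\psi_G\circ h=0=0\cdot\psi_F$.
\item If $\psi_F=0$, then $\psi_G\circ q=\psi$, so $\psi_G\circ h=\psi\circ\phi|_F=w\psi|_F=0=0\cdot\psi_F$.
\end{itemize}
Next I show $h=0$ using the Harder--Narasimhan filtration of $(G,\psi_G)$ from Proposition \ref{HN filt}, or $G$ itself if it is semistable. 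Write $(G_1,\psi_{G_1})$ for its first piece. By the maximality in Lemma \ref{max dest}, its reduced polynomial is strictly smaller than that of $(F,\psi_F)$. Otherwise the preimage of $G_1$ in $E$, whose framed polynomial is additive over $F$ and $G_1$ by the convention on induced framings, would have reduced polynomial at least that of $F$ while strictly containing $F$, contradicting Lemma \ref{max dest}. Consequently every HN factor of $(G,\psi_G)$ has reduced polynomial strictly less than that of the semistable $(F,\psi_F)$. The first statement of Lemma \ref{vanishing hom} then gives that $h$ composed to each factor vanishes. Inducting up the filtration, $h$ lands in $G_{l-1}$, then in $G_{l-2}$, and so on, hence $h=0$. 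One must verify at each stage that the restricted maps remain framed homomorphisms, which follows from the same two-case check above. Therefore $\phi(F)\subset F$.

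The main obstacle is the bookkeeping of induced framings. Two points need care: that $h$ and its corestrictions to the HN pieces are genuinely framed homomorphisms (the factor $w$ is forced to be $0$ in both cases above), and that the additivity $P_{(E,\psi)}=P_{(F,\psi_F)}+P_{(G,\psi_G)}$ justifies the comparison between the slope of $F$ and the slope of the first HN piece of $E/F$. With these in place, the contradiction with $\phi$-invariant semistability finishes the proof.
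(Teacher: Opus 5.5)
Your proof is correct and follows essentially the paper's argument: the paper takes the Harder--Narasimhan filtration of $(E,\psi)$ and uses the first statement of Lemma \ref{vanishing hom} to show that $\phi$ cannot map $E_1$ nontrivially into any later factor $E_m/E_{m-1}$, so $E_1$ is $\phi$-invariant and destabilizes the pair. You run the same argument through the quotient $E/F$ and its HN filtration, and you also verify that the induced maps satisfy the framed-homomorphism condition, a step the paper leaves implicit.
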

\begin{proof}
If $(E,\psi)$ is $\s$-semistable, then $(E,\ve,\phi,\psi)$ is also $\s$-semistable.

Assume that $(E,\psi)$ is not $\s$-semistable. By Proposition \ref{HN filt}, we can take its Harder-Narasimhan filtration
$$(0,0)\subset(E_{1},\psi_{E_{1}})\subset\cdots\subset(E_{l-1},\psi_{E_{l-1}})\subset(E_{l},\psi_{E_{l}})=(E,\psi)$$
such that $(E_{i}/E_{i-1},\psi_{E_{i}/E_{i-1}})$ are $\s$-semistable and
$$\frac{P_{(E_{i}/E_{i-1},\psi_{E_{i}/E_{i-1}})}}{\rk E_{i}-\rk E_{i-1}}>\frac{P_{(E_{i+1}/E_{i},\psi_{E_{i+1}/E_{i}})}}{\rk E_{i+1}-\rk E_{i}}\text{ for }i=1,\dots,l-1.$$
By Lemma \ref{vanishing hom}, $\Hom((E_{i}/E_{i-1},\psi_{E_{i}/E_{i-1}}),(E_{j}/E_{j-1},\psi_{E_{j}/E_{j-1}}))=0$ for $i<j$.

The framed submodule $(E_{1},\psi_{E_{1}})$ satisfies
$$\frac{P_{(E_{1},\psi_{E_{1}})}}{\rk E_{1}}>\frac{P_{(E,\psi)}}{\rk E}.$$
Consider $\phi\in\End((E,\psi))$. Suppose that $\phi(E_{1})$ is nonzero. Then $\phi$ induces a nonzero homomorphism in
$$\Hom((E_{1},\psi_{E_{1}}),(E_{m}/E_{m-1},\psi_{E_{m}/E_{m-1}}))$$ for some $m\ge 1$,
but there are no nonzero homomorphisms unless $m=1$. Thus $E_{1}$ is $\phi$-invariant, and $(E,\ve,\phi,\psi)$ is not $\s$-semistable.
\end{proof}

\begin{prop}[{cf. \cite[Proposition 2.9]{Sch00}}]\label{FM in the final chamber}
A framed module $(E,\psi)$ with $\psi\ne0$ is $\s$-stable for all sufficiently large positive $\s\in\qq$ if and only if $\ker\psi=0$.
\end{prop}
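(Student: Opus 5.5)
We prove the two directions separately. In both we write $P_{(F,\psi_F)}/\rk F$ explicitly for a subbundle $F$ of $E$ and compare it with $P_{(E,\psi)}/\rk E = (P_E-\s)/\rk E$. On a curve, all the Hilbert polynomials involved have the same leading coefficient $\rk\cdot\deg\cO_X(1)$ after dividing by rank. So the comparison reduces to comparing $\mu(F)-\delta(\psi_F)\s/\rk F$ with $\mu(E)-\s/\rk E$, where $\mu$ is the slope (degree over rank).

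Suppose first that $\ker\psi=0$. For a proper nonzero subbundle $F$ we have $\psi_F\ne0$, so we must show
\[
\mu(F)-\frac{\s}{\rk F}<\mu(E)-\frac{\s}{\rk E},
\]
that is, $\mu(F)-\mu(E)<\s\bigl(\frac1{\rk F}-\frac1{\rk E}\bigr)$. Since $\rk F<\rk E$, the coefficient $\frac1{\rk F}-\frac1{\rk E}$ is at least $\frac{1}{\rk E(\rk E-1)}>0$. The degrees of subbundles of a fixed bundle $E$ are bounded above: $\mu_{\max}(E)$ is finite, for instance by the Harder--Narasimhan filtration of $E$. Hence $\mu(F)-\mu(E)\le\mu_{\max}(E)-\mu(E)$, which is a constant independent of $F$. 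Therefore every $\s>\rk E(\rk E-1)\bigl(\mu_{\max}(E)-\mu(E)\bigr)$ works uniformly in $F$, and $(E,\psi)$ is $\s$-stable for all sufficiently large $\s$.

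Conversely, suppose $\ker\psi\ne0$. Since $\psi\ne0$, the subsheaf $\ker\psi$ is proper. It is saturated, being the kernel of a map to the torsion-free sheaf $\cH$, so it is a proper nonzero subbundle $K$ with $\psi_K=0$. The stability inequality for $K$ then reads $\mu(K)<\mu(E)-\s/\rk E$. This fails as soon as $\s\ge\rk E(\mu(E)-\mu(K))$. Hence $(E,\psi)$ is not $\s$-stable for large $\s$. This is essentially Proposition~\ref{numerical property} in the case $\phi=0$, where $\ker\psi$ is trivially invariant.

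The only point needing care is the uniform bound on $\mu(F)$ over all subbundles $F$ of the fixed bundle $E$. This is standard: the maximal destabilizing subbundle in the Harder--Narasimhan filtration has slope $\mu_{\max}(E)$ and dominates every subbundle. Alternatively, one can use Lemma~\ref{max dest} with $\s=0$. Note that the threshold for $\s$ depends on $E$, which is consistent with the statement, since it fixes the framed module first.
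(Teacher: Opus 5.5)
Your proof is correct and is essentially the paper's: when $\ker\psi\ne0$ you test stability against the subbundle $\ker\psi$ exactly as the paper does, and when $\ker\psi=0$ you use a uniform upper bound on the slopes of subbundles. The only difference is the source of that bound. You bound $\mu(F)$ by $\mu_{\max}(E)$, whereas the paper uses $G\cong\psi(G)\subset\cH$ to bound it by $\mu(\cH_{1})$. The paper's threshold therefore depends only on $\cH$ and $d/r$, so it is uniform over all framed modules of the given type, in the spirit of \cite[Proposition 2.9]{Sch00}. Since $\psi$ is injective, $\mu_{\max}(E)\le\mu(\cH_{1})$ anyway, so your bound upgrades to this uniform one at no cost.
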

\begin{proof}
(`Only if' part) Assume that $(E,\psi)$ is $\s$-stable for all sufficiently large positive $\s\in\qq$ and $\ker\psi\ne0$. Then
$$\frac{\deg \ker\psi}{\rk\ker\psi}<\frac{\deg E}{\rk E}-\frac{\s}{\rk E}.$$
Then $\s$ is bounded above, which is a contradiction.

(`If' part) Assume that $\ker\psi=0$. Let $G$ be a nonzero proper subbundle of $E$. Since $G\cong\psi(G)$, $\displaystyle\frac{\deg G}{\rk G}\le\frac{\deg \cH_{1}}{\rk\cH_{1}}$, where the Harder-Narasimhan filtration of $\cH$ is $0\subset\cH_{1}\subset\cdots\subset\cH_{l-1}\subset\cH_{l}$. Then $\displaystyle\frac{\deg G}{\rk G}<\frac{\deg E}{\rk E}+\left(\frac{1}{\rk G}-\frac{1}{\rk E}\right)\s$ for all sufficiently large positive $\s\in\qq$.
\end{proof}

\begin{thm}\label{st FH equiv st FM}
\begin{enumerate}
\item For a semistable vector bundle $E$ on $X$, a framed Hitchin pairs $(E,\ve,\phi,\psi)$ is $\s$-stable with $\phi\in\End((E,\psi))$ and $\psi\ne0$ if and only if a framed module $(E,\psi)$ with $\psi\ne0$ is $\s$-stable.
\item Let $E$ be an unstable vector bundle on $X$ with the Harder-Narasimhan filtration
$$0=E_{0}\subset E_{1}\subset\cdots\subset E_{n}=E$$
satisfying $E_{1}\subset\ker\psi$. Then a framed Hitchin pairs $(E,\ve,\phi,\psi)$ is $\s$-stable with $\phi\in\End((E,\psi))$ and $\psi\ne0$ if and only if a framed module $(E,\psi)$ with $\psi\ne0$ is $\s$-stable.
\end{enumerate}
\end{thm}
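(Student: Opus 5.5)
The implication from the framed module to the framed Hitchin pair is formal, and I would dispose of it first, for both (1) and (2) at once. If $(E,\psi)$ is $\s$-stable, then the strict inequality holds for every proper nonzero subbundle. In particular it holds for the $\phi$-invariant ones, so $(E,\ve,\phi,\psi)$ is $\s$-stable for any $\phi$, constrained or not.

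For the converse, the plan is to show that any constrained $\phi$ on an $\s$-stable pair is a scalar, $\phi=c\cdot\id_{E}$. Once that holds, every subbundle is $\phi$-invariant and the two stability conditions coincide. Assume $(E,\ve,\phi,\psi)$ is $\s$-stable with $\phi\in\End((E,\psi))$, say $\psi\circ\phi=w\psi$, and $\psi\ne0$. By Theorem \ref{ss FH equiv ss FM}, $(E,\psi)$ is already $\s$-semistable. Since $L=\cO_{X}$, the coefficients of the characteristic polynomial $\det(t\cdot\id-\phi)$ are global functions on the projective curve $X$, hence constant. Choose a root $c\in\cc$ and set $h:=\phi-c\cdot\id_{E}$. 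Then:
\begin{itemize}
\item $h$ is again in $\End((E,\psi))$, because $\psi\circ h=(w-c)\psi$;
\item $h$ commutes with $\phi$, so $K:=\ker h$ and $G:=\im h$ are $\phi$-invariant;
\item $\det h\equiv0$, so $K\ne0$.
\end{itemize}
On a curve $G\subset E$ is torsion free, so $K$ is a subbundle. If $h\ne0$, then $K$ is a proper nonzero $\phi$-invariant subbundle.

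The key step is to show, when $h\ne0$, that $K$ has framed reduced Hilbert polynomial equal to that of $(E,\psi)$. This contradicts $\s$-stability of the Hitchin pair and forces $h=0$. I would copy the chain of inequalities in the proof of Lemma \ref{vanishing hom}, applied with $(E',\psi')=(E,\psi)$ and $w$ replaced by $w-c$. First, $G\cong E/K$ is a quotient of the semistable $(E,\psi)$, which bounds $P_{(G,\cdot)}/\rk G$ from below by $P_{(E,\psi)}/\rk E$. Second, $G$ is a subsheaf of the semistable $(E,\psi)$, which bounds it from above by the same quantity. The case analysis on whether $w-c=0$ handles the comparison between the two induced framings on $G$, exactly as in that lemma. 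So all these inequalities are equalities, and additivity $P_{(E,\psi)}=P_{(K,\psi_{K})}+P_{(G,\psi_{G})}$ then gives $P_{(K,\psi_{K})}/\rk K=P_{(E,\psi)}/\rk E$.

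I expect the only delicate point to be the bookkeeping of the framing conventions. This concerns which of $K$ and $E/K$ carries the $\s$-correction, and the comparison of $\psi_{G}$ as a quotient framing with $\psi_{G}$ as a restricted framing when $w=c$. The recipe in Lemma \ref{vanishing hom} should cover it, but I would check the cases $K\subset\ker\psi$ and $K\not\subset\ker\psi$ separately. Granting this, $\phi$ is scalar and both (1) and (2) follow. The argument appears not to use the extra hypotheses on $E$ (semistable, or $E_{1}\subset\ker\psi$). If they turn out to be needed, the place they would enter is in excluding a failure of the kernel-equality step. In that case my fallback is to use the Harder--Narasimhan filtration of $E$ to show directly that $E_{1}$ or $\ker\psi$ is $\phi$-invariant, as in the proof of Theorem \ref{ss FH equiv ss FM}.
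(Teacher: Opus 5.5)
Your argument is correct, and it takes a genuinely different route from the paper.

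The paper argues by contradiction on a destabilizing framed submodule of the strictly $\s$-semistable $(E,\psi)$. It first excludes a $K_{\max}\subset\ker\psi$, using the $\phi$-invariant first term of the Harder--Narasimhan filtration of $\ker\psi$. It then excludes an $F_{\max}\not\subset\ker\psi$, using either the semistability of $E$ or the $\phi$-invariance of $E_{1}\subset\ker\psi$. That last step is where the hypotheses of (1) and (2) are used, and it is why rank $2$ needs the separate Corollary \ref{st FH equiv st FM in rank 2}.

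You instead show that $\phi$ is a scalar. This is the ``stable implies simple'' argument: because $L=\cO_{X}$ and $\psi\circ\phi=w\psi$, $\phi$ is itself an endomorphism of the framed Hitchin pair. Your bookkeeping does close up. Write $\bar h\colon E/K\to G$ for the isomorphism induced by $h$. If $w\neq c$, then $K\subset\ker\psi$ and $\psi|_{G}\circ\bar h=(w-c)\psi_{E/K}$. If $w=c$, then $\psi|_{G}=0$. In both cases $P_{(E/K,\psi_{E/K})}\le P_{(G,\psi|_{G})}$.

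Two small remarks:
\begin{itemize}
\item The upper bound should be applied to the saturation $\bar G$ of $G=\im h$. It is $\phi$-invariant and has degree $\ge\deg G$. Also $\psi|_{\bar G}=0$ iff $\psi|_{G}=0$, since $\cH$ is locally free.
\item You do not need Theorem \ref{ss FH equiv ss FM}. Stability of the Hitchin pair, applied to the $\phi$-invariant proper subbundles $K$ and $\bar G$, already gives the contradiction $P_{(E,\psi)}/\rk E<P_{(E/K,\psi_{E/K})}/\rk G\le P_{(\bar G,\psi|_{\bar G})}/\rk G<P_{(E,\psi)}/\rk E$.
\end{itemize}

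Your route buys a lot. It uses no hypothesis on $E$ and works in every rank. It therefore gives (1), (2) and Corollary \ref{st FH equiv st FM in rank 2} at once. It also settles the question in the remark after that corollary: no rank $\ge 3$ example of that kind exists. Your fallback is unnecessary.

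The paper's route stays inside the Harder--Narasimhan / maximal-destabilizing framework it already used for the semistable comparison. The price is the extra assumptions on $E$.
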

\begin{proof}
It is obvious that if a framed module $(E,\psi)$ is $\s$-stable, then a framed Hitchin pairs $(E,\ve,\phi,\psi)$ is $\s$-stable. So, it suffices to prove `only if' parts.

Assume that $(E,\ve,\phi,\psi)$ is $\s$-stable with $\phi\in\End((E,\psi))$ and $(E,\psi)$ is not $\s$-stable. By Theorem \ref{ss FH equiv ss FM}, $(E,\psi)$ is not $\s$-stable but $\s$-semistable. Then there is the maximal destabilizing submodule $(K_{\max},0)$ with $K_{\max}\subset\ker\psi$ and
\begin{equation}\label{K max}\frac{P_{K_{\max}}}{\rk K_{\max}}=\frac{P_{E}}{\rk E}-\frac{\s}{\rk E}\end{equation}
or the maximal destabilizing submodule $(F_{\max},\psi_{F_{\max}})$ with $F_{\max}\not\subset\ker\psi$ and
\begin{equation}\label{F max}\frac{P_{F_{\max}}}{\rk F_{\max}}-\frac{\s}{\rk F_{\max}}=\frac{P_{E}}{\rk E}-\frac{\s}{\rk E}.\end{equation}
But we see that $\ker\psi$ is $\phi$-invariant because $\phi\in\End((E,\psi))$. Note that $\ker\psi\ne E$ because $\psi\ne0$.

We first claim that there is no such $K_{\max}$. We have only to show for the case $\ker\psi\ne0$. Since $(E,\ve,\phi,\psi)$ is $\s$-stable, we have
$$\frac{P_{\ker\psi}}{\rk\ker\psi}<\frac{P_{E}}{\rk E}-\frac{\s}{\rk E}.$$
By (\ref{K max}), we get $\dss\frac{P_{K_{\max}}}{\rk K_{\max}}>\frac{P_{\ker\psi}}{\rk\ker\psi}$. Thus $K_{\max}\subsetneq\ker\psi$, and so $\ker\psi$ is unstable. Now we can take the Harder-Narasimhan filtration of $\ker\psi$
$$0=G_{0}\subset G_{1}\subset\cdots\subset G_{m}=\ker\psi$$
such that $G_{i}/G_{i-1}$ are semistable and
$$\frac{P_{G_{i}/G_{i-1}}}{\rk(G_{i}/G_{i-1})}>\frac{P_{G_{i+1}/G_{i}}}{\rk(G_{i+1}/G_{i})}\text{ for }i=1,\dots,m-1.$$
Then we obtain that $G_1$ is $\phi$-invariant. Indeed, suppose not, then $\phi$ induces a nonzero homomorphism $\bar{\phi}:G_{1}\to G_{i}/G_{i-1}$ for some $i>1$. But since $\dss\frac{P_{G_{1}}}{\rk G_{1}}>\frac{P_{G_{i}/G_{i-1}}}{\rk(G_{i}/G_{i-1})}$, $\bar{\phi}$ must be zero by  \cite[Proposition 1.2.7]{HL10}, which is a contradiction.

By \cite[Lemma 1.3.5]{HL10}, we have $\dss\frac{P_{G_{1}}}{\rk G_{1}}\ge\frac{P_{K_{\max}}}{\rk K_{\max}}$, and $G_{1}\supset K_{\max}$ when the equality holds. It violates the $\s$-stability of $(E,\ve,\phi,\psi)$ and (\ref{K max}) since $G_1$ is $\phi$-invariant. Hence we do not have such $K_{\max}$.
\begin{enumerate}
\item Assume that $E$ is semistable. Then we get a contradiction because $\dss\frac{P_{F_{\max}}}{\rk F_{\max}}-\frac{P_{E}}{\rk E}=\frac{\s}{\rk F_{\max}}-\frac{\s}{\rk E}>0$.
\item Assume that $E$ is unstable with the Harder-Narasimhan filtration
$$0=E_{0}\subset E_{1}\subset\cdots\subset E_{n}=E$$
such that $E_{1}\subset\ker\psi$, $E_{i}/E_{i-1}$ are semistable and
$$\frac{P_{E_{i}/E_{i-1}}}{\rk(E_{i}/E_{i-1})}>\frac{P_{E_{i+1}/E_{i}}}{\rk(E_{i+1}/E_{i})}\text{ for }i=1,\dots,n-1.$$
Note that $\dss\frac{P_{E_{1}}}{\rk E_{1}}\ge\frac{P_{F_{\max}}}{\rk F_{\max}}$, and $E_{1}\supset F_{\max}$ when the equality holds (see \cite[Lemma 1.3.5]{HL10}).

If $\dss\frac{P_{E_{1}}}{\rk E_{1}}=\frac{P_{F_{\max}}}{\rk F_{\max}}$, then $E_{1}\supset F_{\max}$, and so $E_{1}\not\subset\ker\psi$ which is a contradiction. Thus we get $\dss\frac{P_{E_{1}}}{\rk E_{1}}>\frac{P_{F_{\max}}}{\rk F_{\max}}$. On the other hand, since $E_{1}\subset\ker\psi$ and $E_{1}$ is $\phi$-invariant by a similar argument of the above $\phi$-invariance of $G_1$, we have $\dss\frac{P_{E_{1}}}{\rk E_{1}}<\frac{P_{F_{\max}}}{\rk F_{\max}}-\frac{\s}{\rk F_{\max}}$ by the $\s$-stability of $(E,\ve,\phi,\psi)$ and (\ref{F max}). Therefore  $\dss\frac{P_{E_{1}}}{\rk E_{1}}<\frac{P_{F_{\max}}}{\rk F_{\max}}$, which is a contradiction.
\end{enumerate}
\end{proof}

\begin{cor}\label{st FH equiv st FM in rank 2}
For a vector bundle $E$ of rank $2$ on $X$, a framed Hitchin pairs $(E,\ve,\phi,\psi)$ is $\s$-stable with $\phi\in\End((E,\psi))$ and $\psi\ne0$ if and only if a framed module $(E,\psi)$ with $\psi\ne0$ is $\s$-stable.
\end{cor}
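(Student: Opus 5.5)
The plan is to deduce the corollary from Theorem \ref{st FH equiv st FM} by checking that, when $\rk E=2$, every case is covered by one of its two parts, or can be disposed of directly. The `if' direction is immediate, exactly as in that proof: a $\phi$-invariant subbundle is in particular a subbundle. For `only if', split according to the Harder--Narasimhan type of $E$. If $E$ is semistable, part (1) of Theorem \ref{st FH equiv st FM} applies. Otherwise $E$ is unstable of rank $2$, with filtration $0\subset E_{1}\subset E$, where $E_{1}$ is the maximal destabilizing line subbundle. If $E_{1}\subset\ker\psi$, part (2) applies verbatim.

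The remaining case is $E$ unstable with $E_{1}\not\subset\ker\psi$. Suppose $(E,\ve,\phi,\psi)$ is $\s$-stable but $(E,\psi)$ is not $\s$-stable. By Theorem \ref{ss FH equiv ss FM}, $(E,\psi)$ is $\s$-semistable, and the first half of the proof of Theorem \ref{st FH equiv st FM} rules out a destabilizing $K_{\max}\subset\ker\psi$; that argument did not use any hypothesis on $E$. So there is a line subbundle $F=F_{\max}\not\subset\ker\psi$ with
\[
\deg F-\s=\frac{d-\s}{2},\quad\text{that is,}\quad \deg F=\frac{d+\s}{2}>\frac d2 .
\]
Since $F$ destabilizes $E$ as a bundle, $\deg E_{1}\ge\deg F$, with $F=E_{1}$ in case of equality.

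To finish, I will show that $F$ is $\phi$-invariant, which contradicts the $\s$-stability of the pair. The approach is to use $\phi\in\End((E,\psi))$, so $\psi\circ\phi=w\psi$ for some $w\in\cc$, together with the rank $2$ structure. First consider $\phi(E_{1})$. The composite $E_{1}\to E/E_{1}$ vanishes because $\deg E_{1}>\deg E/E_{1}$, so $E_{1}$ is $\phi$-invariant. Since $E_{1}\not\subset\ker\psi$, the $\s$-stability of the pair gives $\deg E_{1}-\s<(d-\s)/2$, that is, $\deg E_{1}<(d+\s)/2=\deg F$. This contradicts $\deg E_{1}\ge\deg F$.

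I expect the main obstacle to be making sure the framing conventions are applied correctly in these inequalities. In particular, $\psi_{E_{1}}\ne0$ must be what allows the $-\s$ term on $E_{1}$, and the inequality $\deg E_{1}\ge\deg F$ must come from the ordinary Harder--Narasimhan maximality for bundles, \cite[Lemma 1.3.5]{HL10}, rather than from framed maximality. Once these points are checked, all cases are covered and the corollary follows.
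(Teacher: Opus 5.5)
Your proof is correct and follows essentially the paper's argument. You reduce via Theorem \ref{st FH equiv st FM} to the unstable case with $E_{1}\not\subset\ker\psi$, rule out $K_{\max}$, and then set the $\phi$-invariance of $E_{1}$, which forces $\deg E_{1}<(d+\s)/2=\deg F_{\max}$, against the maximality $\deg E_{1}\ge\deg F_{\max}$; this merges the paper's two cases $F_{\max}=E_{1}$ and $F_{\max}\ne E_{1}$ into one, and the only slip is your announced plan to show that $F$ itself is $\phi$-invariant, which you never do and do not need.
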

\begin{proof}
It suffices to prove `only if' part. By Theorem \ref{st FH equiv st FM}, we assume that $E$ is an unstable vector bundle of rank $2$ on $X$ with the Harder-Narasimhan filtration $0=E_{0}\subsetneq E_{1}\subsetneq E_{2}=E$ with $E_{1}\not\subset\ker\psi$. 

We follow the same argument in the proof of Theorem \ref{st FH equiv st FM}. Then we have either $F_{\max}=E_{1}$ or $F_{\max}\ne E_{1}$.
\begin{itemize}
\item $F_{\max}=E_{1}$ : Since $E_{1}$ is $\phi$-invariant, $\displaystyle P_{F_{\max}}<\frac{P_E}{2}+\frac{\s}{2}=P_{F_{\max}}$, which is impossible.
\item $F_{\max}\ne E_{1}$ : Since $E_{1}$ is the maximal destabilizing subbundle of $E$ and $\phi$-invariant, $\displaystyle P_{F_{\max}}\le P_{E_{1}}<\frac{P_E}{2}+\frac{\s}{2}=P_{F_{\max}}$, which is impossible.
\end{itemize}
\end{proof}

\begin{rem}
We are not sure that there exists an example that $(E,\ve,\phi,\psi)$ is $\s$-stable and the underlying framed module $(E,\psi)$ is not $\s$-stable such that $\rk E\ge 3$ and $E_{1}\not\subset \ker\psi$.
\end{rem}

We have similar results of $\s$-stabilities for oriented cases.
\begin{thm}\label{ss OFH equiv ss OFM}
An oriented framed Hitchin pair $(E,\ve,\delta,\phi,\psi)$ is semistable with $\phi\in\End((E,\psi))$ if and only if an oriented framed module $(E,\delta,\psi)$ is semistable.
\end{thm}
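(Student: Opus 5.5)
We prove the two implications separately, following the proof of Theorem \ref{ss FH equiv ss FM} and splitting along the two branches in the definition of semistability of oriented objects.

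\textbf{The easy direction.} Suppose $(E,\delta,\psi)$ is semistable. Every $\phi$-invariant subbundle is in particular a subbundle, so there are two cases.
\begin{itemize}
\item If $\psi$ is injective, then $\ker\psi=0$ contains no nonzero $\phi$-invariant subbundle. The first alternative in the definition for $(E,\ve,\delta,\phi,\psi)$ therefore holds.
\item If $\ker\psi\ne0$, then $\ker\psi$ is $\phi$-invariant because $\phi\in\End((E,\psi))$: from $\psi\circ\phi=w\psi$ we get $\phi(\ker\psi)\subset\ker\psi$. So $\phi$-invariant subbundles in $\ker\psi$ exist. We must compare $\s_{E,\phi,\psi}$ with $\s_{E,\psi}$, since $K_{\max}$ for $(E,\phi,\psi)$ is the maximal destabilizing $\phi$-invariant subbundle of $\ker\psi$ (cf.\ \cite[Lemma 2.2]{Sch00}). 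As in the proof of Theorem \ref{st FH equiv st FM}, the first Harder--Narasimhan piece $G_1$ of $\ker\psi$ is $\phi$-invariant, because $\phi$ maps $\ker\psi$ into itself. Hence the maximal destabilizing subbundle of $\ker\psi$ is $\phi$-invariant, the two $K_{\max}$ coincide, and $\s_{E,\phi,\psi}=\s_{E,\psi}$. The inequality required for $\phi$-invariant $F$ is then a special case of the one for all $F$, and $\delta$ is an isomorphism in both definitions.
\end{itemize}

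\textbf{The converse.} Suppose $(E,\ve,\delta,\phi,\psi)$ is semistable with $\phi\in\End((E,\psi))$.
\begin{itemize}
\item If no $\phi$-invariant subbundle lies in $\ker\psi$, then $\ker\psi=0$ because $\ker\psi$ itself is $\phi$-invariant. So $\psi$ is injective and $(E,\delta,\psi)$ is semistable.
\item Otherwise $\delta$ is an isomorphism, $\ker\psi\ne0$, and $\s:=\s_{E,\phi,\psi}=\s_{E,\psi}\ge0$ as above. The semistability condition for oriented objects with $\ker\psi\neq0$ is exactly $\s$-semistability of the framed module or framed Hitchin pair at the parameter $\s$. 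Indeed, the inequality $\frac{P_F-\s}{\rk F}\le\frac{P_E-\s}{\rk E}$ for $F\not\subset\ker\psi$ is $\s$-semistability. For $F\subset\ker\psi$ the correct framed inequality is $\frac{P_F}{\rk F}\le\frac{P_E-\s}{\rk E}$, and this follows from the definition of $K_{\max}$ and $\s$, since $\frac{P_F}{\rk F}\le\frac{P_{K_{\max}}}{\rk K_{\max}}=\frac{P_E-\s}{\rk E}$.
\end{itemize}
So we may apply Theorem \ref{ss FH equiv ss FM} at parameter $\s$ to conclude that $(E,\psi)$ is $\s$-semistable, giving all the required inequalities for $(E,\delta,\psi)$.

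\textbf{Main obstacle.} The delicate points are the translation just described and the case $\s=0$. Theorem \ref{ss FH equiv ss FM} was stated for positive $\s$. When $\s=0$ its Harder--Narasimhan argument (Lemma \ref{vanishing hom}, Proposition \ref{HN filt}) still goes through verbatim, because the filtration of ordinary bundles suffices. Alternatively, for $\s=0$ one argues directly that the first Harder--Narasimhan piece $E_1$ of $E$ is $\phi$-invariant, since $\phi$ is an endomorphism, and that $E_1$ destabilizes. I would check these two points carefully.
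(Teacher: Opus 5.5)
Your proof is correct, but it takes a different route from the paper. The paper settles the statement in one line by citing \cite[Lemma 2.1.1]{OST99} and \cite[Lemma 2.3, Corollary 2.8, Proposition 2.9]{Sch00}. Those results express semistability of an oriented object as either membership in the final chamber, or $\s$-semistability of the underlying framed object at the critical parameter with $\delta$ an isomorphism. The paper then matches the two sides using Proposition \ref{FM in the final chamber} and Theorem \ref{ss FH equiv ss FM}.

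You instead work directly with the definitions in Section \ref{s2}. You match the first branches through the $\phi$-invariance of $\ker\psi$, so Proposition \ref{FM in the final chamber} is not needed. You show $\s_{E,\phi,\psi}=\s_{E,\psi}$ from the $\phi$-invariance of the first Harder--Narasimhan piece of $\ker\psi$. You then re-derive the dictionary between the oriented inequalities and $\s$-semistability.

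Your version makes explicit two things the paper's ``immediate consequence'' uses without comment:
\begin{itemize}
\item the two critical parameters agree under the constraint $\phi\in\End((E,\psi))$ with $L=\cO_X$; otherwise Theorem \ref{ss FH equiv ss FM} could not be applied at a single $\s$;
\item the value $\s=0$ lies outside the positive-$\s$ setting of that theorem, and it needs the separate easy check you give.
\end{itemize}
The paper's version is shorter and relies on the sources' own definitions. You also handled correctly the fact that the inequality as written in Section \ref{s2} subtracts $\s$ even for $F\subset\ker\psi$.

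Two small points should be written out. First, your final step needs the reverse half of the dictionary, $\frac{P_F-\s}{\rk F}\le\frac{P_F}{\rk F}\le\frac{P_E-\s}{\rk E}$ for $F\subset\ker\psi$, and this uses $\s\ge0$. Second, when you conclude $\ker\psi=0$ in the first branch, note that $\ker\psi$ is saturated in $E$ (its quotient embeds in $\cH$), so it really is a $\phi$-invariant subbundle.
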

\begin{proof}
It is an immediate consequence from \cite[Lemma 2.1.1]{OST99}, \cite[Lemma 2.3, Corollary 2.8 and Proposition 2.9]{Sch00}, Theorem \ref{ss FH equiv ss FM} and Proposition \ref{FM in the final chamber}.
\end{proof}

\begin{thm}\label{st OFH equiv st OFM in rank 2}
For a vector bundle $E$ with the rank $2$ on $X$, an oriented framed Hitchin pair $(E,\ve,\delta,\phi,\psi)$ is stable with $\phi\in\End((E,\psi))$ and $\psi\ne0$ if and only if an oriented framed module $(E,\delta,\psi)$ with $\psi\ne0$ is stable.
\end{thm}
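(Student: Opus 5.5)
The plan is to reduce the statement to Corollary \ref{st FH equiv st FM in rank 2}, using the description of oriented stability through $\s_{E,\phi,\psi}$ and $K_{\max}$, exactly as Theorem \ref{ss OFH equiv ss OFM} was reduced to Theorem \ref{ss FH equiv ss FM}. The direction ``$(E,\delta,\psi)$ stable $\Rightarrow$ $(E,\ve,\delta,\phi,\psi)$ stable'' should be formal. Every $\phi$-invariant subbundle is in particular a subbundle. Since $\phi\in\End((E,\psi))$, the kernel $\ker\psi$ is $\phi$-invariant. I would check from \cite[Lemma 2.2]{Sch00} and \cite[Lemma 2.1.1]{OST99} that the subbundle $K_{\max}$ used to define $\s_{E,\phi,\psi}$ agrees with the one for $\s_{E,\psi}$ in the constrained setting. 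For this I would argue as in the proof of Theorem \ref{st FH equiv st FM}: the first Harder--Narasimhan piece of $\ker\psi$ is $\phi$-invariant, so the maximal destabilizing piece inside $\ker\psi$ is automatically $\phi$-invariant. Consequently $\s_{E,\phi,\psi}=\s_{E,\psi}$, and the inequalities (or the splitting) transfer directly.

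For the converse, I split according to the two alternatives in the definition. First suppose there is no $\phi$-invariant subbundle in $\ker\psi$. Since $\ker\psi$ is itself $\phi$-invariant, this forces $\ker\psi=0$, so $\psi$ is injective and $(E,\delta,\psi)$ is stable by definition. Otherwise $\ker\psi\neq0$, $\delta$ is an isomorphism, and by the identification above $\s:=\s_{E,\phi,\psi}=\s_{E,\psi}>0$. In the first sub-case the strict inequality holds for all $\phi$-invariant proper $F$. This says precisely that $(E,\ve,\phi,\psi)$ is a $\s$-stable framed Hitchin pair. Here $\psi\neq0$ and $\phi\in\End((E,\psi))$, so Corollary \ref{st FH equiv st FM in rank 2} gives that $(E,\psi)$ is a $\s$-stable framed module. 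That is exactly the first alternative of oriented stability for $(E,\delta,\psi)$.

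In the second sub-case there is a splitting $(E,\ve,\phi,\psi)\cong(K_{\max},\ve,\phi|_{K_{\max}},0)\oplus(E',\ve,\phi|_{E'},\psi)$. Since the rank is $2$, both $K_{\max}$ and $E'$ are line bundles. Hence $K_{\max}$ is trivially stable as a bundle. Also $(E',\psi)$ is a rank-one framed module with nonzero framing, and such a module is always $\s$-stable because it has no proper nonzero subbundles. The numerical equality carries over verbatim, so $(E,\delta,\psi)$ satisfies the second alternative.

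The main obstacle is the identification $\s_{E,\phi,\psi}=\s_{E,\psi}$, i.e. showing that $K_{\max}$ is the same whether or not one restricts to $\phi$-invariant subbundles. I would handle it using the $\phi$-invariance of Harder--Narasimhan pieces of $\ker\psi$. In rank $2$ it becomes trivial, since then $\ker\psi$ is a line bundle (or zero) and $K_{\max}=\ker\psi$ in both settings.
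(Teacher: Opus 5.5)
There is a genuine gap in your converse direction. In the first sub-case you claim that the strict inequality $\frac{P_F}{\rk F}-\frac{\s}{\rk F}<\frac{P_E}{\rk E}-\frac{\s}{\rk E}$ for all $\phi$-invariant proper $F$ ``says precisely'' that $(E,\ve,\phi,\psi)$ is a $\s$-stable framed Hitchin pair, with $\s=\s_{E,\phi,\psi}$. It does not. The oriented condition subtracts $\s$ for every $F$, whereas $\s$-stability uses $P_{(F,\psi_F)}$, which subtracts $\s$ only when $F\not\subset\ker\psi$.

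In fact, once $\ker\psi$ contains a $\phi$-invariant subbundle, $(E,\ve,\phi,\psi)$ is \emph{never} $\s_{E,\phi,\psi}$-stable. The subbundle $K_{\max}$ is proper, nonzero, $\phi$-invariant and has zero induced framing, and by the very definition of $\s_{E,\phi,\psi}$
\[
\frac{P_{(K_{\max},0)}}{\rk K_{\max}}=\frac{P_{K_{\max}}}{\rk K_{\max}}=\frac{P_E-\s_{E,\phi,\psi}}{\rk E}=\frac{P_{(E,\psi)}}{\rk E}.
\]
So Corollary \ref{st FH equiv st FM in rank 2} has no admissible input at this value of $\s$. Its output, $(E,\psi)$ being $\s_{E,\psi}$-stable, is impossible for the same reason, so it is not ``exactly the first alternative''. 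The first alternative really says that $(E,\psi)$ is $\s_{E,\psi}$-semistable with equality attained only by subbundles of $\ker\psi$. Turning this into honest $\s$-stability requires moving to $\s$ slightly below $\s_{E,\phi,\psi}$. Accordingly, the paper does not apply the corollary at $\s_{E,\phi,\psi}$ itself; it combines it with the comparison results \cite[Lemma 2.1.1]{OST99} and \cite[Lemma 2.3, Corollary 2.8]{Sch00}. The identity $\s_{E,\phi,\psi}=\s_{E,\psi}$, which you single out as the main obstacle, is the easy part in rank $2$.

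In rank $2$ you can close the gap by hand. Put $K=\ker\psi$, a line subbundle, so that $\s=\deg E-2\deg K>0$.
\begin{itemize}
\item A line subbundle $F\ne K$ maps injectively to $E/K$. Hence $\deg F\le\deg(E/K)$, with equality iff $F$ splits $0\to K\to E\to E/K\to0$.
\item Unwinding the inequality, the first alternative for $(E,\delta,\psi)$ says exactly that this extension is nonsplit. For the Hitchin pair it says there is no $\phi$-invariant splitting.
\item Since $\deg K<\deg(E/K)$, we have $\Hom(E/K,K)=0$, so a splitting $E=K\oplus E'$ is unique.
\item Because $\phi$ preserves $K$ and $L=\cO_X$, the component $E'\to K$ of $\phi$ vanishes, i.e.\ $E'$ is $\phi$-invariant.
\end{itemize}
This identifies the two first alternatives. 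Equivalently, check that the first alternative at $\s$ coincides, for both objects, with $\s'$-stability for any $\s'\in(\max\{0,\s-2\},\s)$, and apply the corollary at such $\s'$.

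The same argument fills a second, smaller gap in your forward direction. A splitting $(K_{\max},0)\oplus(E',\psi)$ of framed modules is a splitting of framed Hitchin pairs only once $E'$ is known to be $\phi$-invariant, and ``transfer directly'' skips this.

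Your handling of the case with no $\phi$-invariant subbundle in $\ker\psi$, via the $\phi$-invariance of $\ker\psi$, is correct. It is a simpler substitute for the paper's use of Proposition \ref{FM in the final chamber} and \cite[Proposition 2.9]{Sch00}.
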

\begin{proof}
It is an immediate consequence from \cite[Lemma 2.1.1]{OST99}, \cite[Lemma 2.3, Corollary 2.8 and Proposition 2.9]{Sch00}, Proposition \ref{FM in the final chamber} and Corollary \ref{st FH equiv st FM in rank 2}.
\end{proof}

\section{$\cc^{*}$-flips of moduli spaces of rank 2 objects on a smooth projective curve}\label{cstar flips}
In this section we connect the diagram (\ref{flips via framed Hitchin pairs}) of Section \ref{flips} to the one (\ref{flips via framed modules}) under some constraint of Higgs fields by using forgetful morphisms.

We use the following denotations:
\begin{enumerate}
\item[] $\bOFH:=\bOFH^{ss}(2,d,\cO_{X},\cH,\cN)$;
\item[] $\bOFH_{\fr\ne0}:=$ the locus of oriented framed Hitchin pairs $(E,\ve,\delta,\phi,\psi)$ of $\bOFH$ with $\psi\ne0$;
\item[] $\bOFH^{s}:=\bOFH\cap\bOFH^{s}(2,d,\cO_{X},\cH,\cN)$;
\item[] $\bOFH_{\fr\ne0}^{s}:=\bOFH_{\fr\ne0}\cap\bOFH^{s}(2,d,\cO_{X},\cH,\cN)$;
\item[] $\bOFM:=\bOFM^{ss}(2,d,\cO_{X},\cH,\cN)$;
\item[] $\bOFM_{\fr\ne0}:=$ the locus of oriented framed modules $(E,\delta,\psi)$ of $\bOFM$ with $\psi\ne0$;
\item[] $\bOFM^{s}:=\bOFM\cap\bOFM^{s}(2,d,\cO_{X},\cH,\cN)$;
\item[] $\bOFM_{\fr\ne0}^{s}:=\bOFM_{\fr\ne0}\cap\bOFM^{s}(2,d,\cO_{X},\cH,\cN)$;
\item[] $\bOFH_{\con}:=$ the locus of oriented framed Hitchin pairs $(E,\ve,\delta,\phi,\psi)$ of $\bOFH$ with $\phi\in\End((E,\psi))$;
\item[] $\bOFH_{\con,\fr\ne0}:=\bOFH_{\con}\cap\bOFH_{\fr\ne0}$;
\item[] $\bOFH_{\con}^{s}:=\bOFH_{\con}\cap\bOFH^{s}$;
\item[] $\bOFH_{\con,\fr\ne0}^{s}:=\bOFH_{\con,\fr\ne0}\cap\bOFH^{s}$;
\item[] $\bFH_{\con}^{\s-ss}:=$ the locus of framed Hitchin pairs $(E,\ve,\phi,\psi)$ of $\bFH^{\s-ss}(2,d,\cO_{X},\cH)$ with $\phi\in\End((E,\psi))$;
\item[] $\bFH_{\con}^{\s-s}:=\bFH_{\con}^{\s-ss}\cap\bFH^{\s-s}(2,d,\cO_{X},\cH)$;
\item[] $\tbFH_{i,\con}^{-}:=(F_{i}^{-})^{-1}(\bFH_{\con}^{\s_{i-1}-s})$;
\item[] $\tbFH_{i,\con}^{+}:=(F_{i}^{+})^{-1}(\bFH_{\con}^{\s_{i}-s})$;
\item[] $\bM_{\con}:=\iota_{\bM}(\bFH_{\con}^{\s_{0}-s})$.
\end{enumerate}

\begin{rem}
We note that $\bFM^{\s-ss}(2,d,\cH)$ and $\bFH^{\s-ss}(2,d,\cO_{X},\cH)$  have nonzero framings (see Sections \ref{framed modules} and \ref{framed Hitchin pairs}).
\end{rem}

\begin{thm}\label{commutativity of full diagram}
\begin{enumerate}
\item There exists a surjective forgetful morphism $p_{\s}\colon \bFH_{\con}^{\s-ss}\to\bFM^{\s-ss}$ given by $(E,\ve,\phi,\psi)\mapsto(E,\psi)$. Further $p_{\s}$ is a $\pp^{1}$-bundle over $\bFM^{\s-s}$.
\item\label{commutativity of full diagram1} $p_{\s'_{i}}\circ G_{i}^{-}=g_{i}^{-}\circ p_{\s_{i-1}}$ for each $i=1,\dots,t$.
\item\label{commutativity of full diagram2} $p_{\s'_{i}}\circ G_{i}^{+}=g_{i}^{+}\circ p_{\s_{i}}$ for each $i=1,\dots,t$.
\item There exists a surjective morphism $\tp_{i}\colon \tbFH_{i,\con}^{\pm}\to\tbFM_{i}^{\pm}$ such that $p_{\s_{i-1}}\circ F_{i}^{-}=f_{i}^{-}\circ\tp_{i}$ and $p_{\s_{i}}\circ F_{i}^{+}=f_{i}^{+}\circ\tp_{i}$ for each $i=1,\dots,t$.
\end{enumerate}
\end{thm}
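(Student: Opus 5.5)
Part (1) comes first. By Theorem \ref{ss FH equiv ss FM}, a constrained pair $(E,\ve,\phi,\psi)$ is $\s$-semistable exactly when $(E,\psi)$ is $\s$-semistable. So the assignment $(E,\ve,\phi,\psi)\mapsto(E,\psi)$ sends $\bFH_{\con}^{\s-ss}$ into $\bFM^{\s-ss}$. To see that this is a morphism, I would work on the parameter schemes used in the GIT constructions of \cite{HL95b} and \cite{Sch00}. The Hitchin-pair parameter space carries a forgetful map to the framed-module parameter space, which is equivariant for the group acting on both. The constrained locus is the closed condition $\psi\circ\phi\in\cc\psi$. Restricting to it and descending to good quotients gives $p_{\s}$; semistable orbit closures are preserved because S-equivalence is compatible under forgetting $\phi$.

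Surjectivity: given an $\s$-semistable $(E,\psi)$, the pair $(E,1,\id_E,\psi)$ is constrained and $\s$-semistable, and it maps to $(E,\psi)$.

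Fiber over a stable point: Lemma \ref{vanishing hom} gives $\End((E,\psi))\cong\cc$, so $\phi=a\cdot\id_E$. The isomorphisms of $(E,\ve,a\,\id_E,\psi)$ that fix $(E,\psi)$ are scalings by $z$, acting on $(\ve,a)$. The fiber is therefore $\{(\ve,a)\}\setminus\{0\}$ modulo $\cc^{*}$, since $\ve=0$ forces $a\neq0$ by non-nilpotency; that is, $\pp^1$. By Corollary \ref{st FH equiv st FM in rank 2}, every point over $\bFM^{\s-s}$ is $\s$-stable. To get local triviality, I would take a local universal framed module on an étale chart of $\bFM^{\s-s}$ and identify $p_{\s}^{-1}$ with $\pp(\cO\oplus\cO)$ there. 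Equivalently, $p_\s^{-1}(\bFM^{\s-s})$ is the projectivization of the rank-$2$ bundle of pairs $(\ve,\phi)$ with $\phi$ in the line bundle $\pi_*\EEnd$ of framed endomorphisms.

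For (2) and (3), the maps $G_i^{\pm}$ and $g_i^{\pm}$ both send a $\s$-stable object to its class as a $\s'_i$-semistable object. This works because the conditions at $\s_{i-1}$ and $\s_i$ both imply $\s'_i$-semistability, and forgetting $\phi$ commutes with passing to S-equivalence classes. The identities then hold pointwise on closed points. Since the sources are reduced (or at least the maps agree on the parameter schemes before taking quotients), they hold as morphisms.

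For (4), use Theorems \ref{ss OFH equiv ss OFM} and \ref{st OFH equiv st OFM in rank 2}. Together they show that the forgetful map $\bOFH_{\con}\to\bOFM$ is well defined and $\cc^{*}$-equivariant for $\lambda$ and each $\lambda_e^k$. The linearizations $l_k^e$ pull back to the corresponding ones on $\bOFM$, so the map respects the GIT quotients, the fixed loci, and the $\cc^{*}$-flip data of \cite[Theorem 3.3]{Tha96}. The blow-up $\tbFH_i^{\pm}$ is Thaddeus's quotient of the blown-up master space along the fixed component. The blow-up centers satisfy $(G_i^{\pm})^{-1}(\text{strictly semistable})\cap\bFH_{\con}=p^{-1}((g_i^{\pm})^{-1}(\text{strictly semistable}))$, so the forgetful map lifts to the strict transforms and gives $\tp_i$ compatible with $F_i^{\pm}$ and $f_i^{\pm}$. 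The universal property of blow-up applies once the preimage of the center is Cartier. Surjectivity follows from surjectivity of $p$ together with properness.

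The main obstacle is (4): showing that the scheme-theoretic inverse image of the center pulls back to a Cartier divisor on $\tbFH_{i,\con}^{\pm}$, or equivalently that the equivariant forgetful map lifts through Thaddeus's construction. My first attempt would be to do it on the level of $\cc^{*}$-master spaces, where it reduces to equivariance plus functoriality of the quotient of the blow-up.
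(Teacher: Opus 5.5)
Parts (1)--(3) match the paper. In (1), you and the paper both read off the fiber $\pp(\cc\oplus\End((E,\psi)))\cong\pp^{1}$ from Lemma~\ref{vanishing hom}, and both use Corollary~\ref{st FH equiv st FM in rank 2} over $\bFM^{\s-s}$. The paper trivializes over charts $U_\alpha$ of the Quot scheme on which $\fP$ is trivial and notes that $\SL(V)$ acts trivially on the $\pp^{1}$ factor. Your coordinate $[\ve:a]$, with $\phi=a\cdot\id_E$, is canonical and even gives a global trivialization. In (2)--(3), drop the appeal to reducedness: nothing establishes that $\bFH_{\con}^{\s-s}$ is reduced. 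Use your fallback instead, which is what the paper does: $G_i^{\pm}$ and $g_i^{\pm}$ are induced by the $\cc^{*}$-equivariant inclusions $\bOFH_{\con,\fr\ne0}^{ss}(\pm)\subset\bOFH_{\con,\fr\ne0}^{ss}(0)$ and $\bOFM_{\fr\ne0}^{ss}(\pm)\subset\bOFM_{\fr\ne0}^{ss}(0)$ \cite[Lemma 3.2]{Tha96}, and these commute with the forgetful maps before passing to quotients.

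The gap is in (4). You reduce the existence of $\tp_i$ to showing that the pulled-back center is Cartier on $\tbFH_{i,\con}^{\pm}$, and then leave exactly this open, so the one substantive step is missing. The paper needs no Cartier verification. It uses functoriality of blowing up \cite[II, Corollary 7.15]{Hart77}: for any morphism $f\colon Y\to X$ of noetherian schemes and coherent ideal $\cJ\subset\cO_X$, the blow-up of $Y$ along $f^{-1}\cJ\cdot\cO_Y$ maps uniquely to the blow-up of $X$ along $\cJ$, compatibly with $f$. The only input is that the Hitchin-side center is the preimage of the module-side center,
\[
(p_{\s_{i-1}})^{-1}\bigl((g_i^{-})^{-1}(\bFM^{\s'_i-ss}\setminus\bFM^{\s'_i-s})\bigr)=(G_i^{-})^{-1}(\bFH_{\con}^{\s'_i-ss}\setminus\bFH_{\con}^{\s'_i-s}),
\]
together with its analogue for $+$. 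You assert this without justification. The paper obtains it in two steps:
\begin{enumerate}
\item By (2), the left side equals $(G_i^{-})^{-1}\bigl((p_{\s'_i})^{-1}(\bFM^{\s'_i-ss}\setminus\bFM^{\s'_i-s})\bigr)$.
\item By Theorem~\ref{ss FH equiv ss FM} and Corollary~\ref{st FH equiv st FM in rank 2}, $(p_{\s'_i})^{-1}(\bFM^{\s'_i-ss}\setminus\bFM^{\s'_i-s})=\bFH_{\con}^{\s'_i-ss}\setminus\bFH_{\con}^{\s'_i-s}$.
\end{enumerate}

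Your Cartier route can also genuinely fail if $\tbFH_{i,\con}^{-}$ is taken to be the full preimage $(F_i^{-})^{-1}(\bFH_{\con}^{\s_{i-1}-s})$. This happens when the center of $F_i^{-}$ has larger codimension in $\bFH^{\s_{i-1}-s}$ than its constrained part has in $\bFH_{\con}^{\s_{i-1}-s}$. Then that preimage acquires components inside the exceptional divisor, and the local equation of the divisor vanishes identically on them. The natural domain for $\tp_i$ is the blow-up of the constrained locus along the pulled-back center, i.e.\ a strict rather than total transform, which is what Corollary 7.15 produces. Once $\tp_i$ exists, your surjectivity argument via properness and surjectivity of $p$ is fine.
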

\begin{proof}
\begin{enumerate}
\item The first statement is an immediate consequence from Theorem \ref{ss FH equiv ss FM}.

Let's prove the second statement. Recall the GIT constructions of $\bFM^{\s-ss}$ and $\bFH^{\s-ss}$ from \cite{HL95b} and \cite{Sch00} respectively. Assume that $d$ and $\deg\cH$ are sufficiently large so that \cite[Theorem 2.1 and Proposition 3.2]{HL95b} and \cite[Proposition 1.5, Assumptions 1.12 and 1.13]{Sch00} hold. Let $V$ be a complex vector space of dimension $p:=d+2(1-g)$. Let $\fQ$ be the quot scheme of quotients $q:V\otimes\cO_{X}\to E$ of $V\otimes\cO_{X}$ with rank $2$ and degree $d$ such that $H^{0}(q):V\to H^{0}(E)$ is an isomorphism. Set $\bR:=\pp(\Hom(V,H^{0}(\cH))^{\vee})$.

Let $Z\subset\fQ\times\bR$ be the closed subscheme of points
$$([q:V\otimes\cO_{X}\to E],[a:V\to H^{0}(\cH)])$$
for which $a:V\otimes\cO_{X}\to\cH$ factors through $q$ and induces a framing $\psi:E\to\cH$. $\SL(V)$ acts on $Z$ as follows:
\begin{center}$g\cdot([q],[a])=([q\circ(g\otimes\id_{\cO_{X}})],[a\circ g])$ for $g\in\SL(V)$.\end{center}
Then $\bFM^{\s-ss}=Z^{\s-ss}/\!/_{\cL_{Z}}\SL(V)$ (resp. $\bFM^{\s-s}=Z^{\s-s}/\!/_{\cL_{Z}}\SL(V)$) for some linearization $\cL_{Z}$ on $Z$, where $Z^{\s-ss}$ (resp. $Z^{\s-s}$) denotes the open subscheme of semistable (resp. stable) points with respect to the linearization $\cL_{Z}$ such that the corresponding framed module is $\s$-semistable (resp. $\s$-stable).

Consider the universal quotient $q_{\fQ}:V\otimes\cO_{\fQ\times X}\to\fE_{\fQ}$ on $\fQ\times X$. Define
$$\widehat{\fP}:=\pp(\cO_{\fQ}\oplus\pi_{\fQ*}\EEnd\fE_{\fQ}).$$

There is a tautological line bundle $\fN_{\widehat{\fP}}$ on $\widehat{\fP}$ such that the tautological surjection gives us a homomorphism on $\widehat{\fP}\times X$
$$V\otimes\pi_{X}^{*}\cO_{X}\to\fE_{\widehat{\fP}}\otimes\pi_{\widehat{\fP}}^{*}\fN_{\widehat{\fP}},$$
where $q_{\widehat{\fP}}:V\otimes\cO_{\widehat{\fP}\times X}\to\fE_{\widehat{\fP}}$ is the pullback of the universal quotient. Define $\fP$ as the closed subscheme of $\widehat{\fP}$ where this homomorphism factors through $\fE_{\widehat{\fP}}$.

Let $\fR\subset\fP\times\bR$ be the closed subscheme of points
$$([q:V\otimes\cO_{X}\to E],[\ve\in\cc,\phi:E\to E],[a:V\to H^{0}(\cH)])$$
for which $a:V\otimes\cO_{X}\to\cH$ factors through $q$ and induces a framing $\psi_{q,a}:E\to\cH$. $\SL(V)$ acts on $\fR$ as follows:
\begin{center}$g\cdot([q],[\ve,\phi],[a])=([q\circ(g\otimes\id_{\cO_{X}})],[\ve,g^{-1}\circ\phi\circ g],[a\circ g])$ for $g\in\SL(V)$.\end{center}
Then $\bFH^{\s-ss}=\fR^{\s-ss}/\!/_{\cL_{\fR}}\SL(V)$ (resp. $\bFH^{\s-s}=\fR^{\s-s}/\!/_{\cL_{\fR}}\SL(V)$) for some linearization $\cL_{\fR}$ on $\fR$, where $\fR^{\s-ss}$ (resp. $\fR^{\s-s}$) denotes the open subscheme of semistable (resp. stable) points with respect to the linearization $\cL_{\fR}$ such that the corresponding framed Hitchin pair is $\s$-semistable (resp. $\s$-stable).

Let $\widetilde{\pi}:\fP\to\fQ$ be the natural projection, and $\pi:\fR\to Z$ be the restriction of $\widetilde{\pi}\times\id_{\bR}:\fP\times\bR\to\fQ\times\bR$. Let $\fR_{\con}:=\{([q],[\ve,\phi],[a])\in\fR\,|\,\psi_{q,a}\circ\phi=w \psi_{q,a}\text{ for some }w\in\cc\}$. Denote $\fR_{\con}^{\s-ss}:=\fR_{\con}\cap\fR^{\s-ss}$ and $\fR_{\con}^{\s-s}:=\fR_{\con}\cap\fR^{\s-s}$. Then $\pi|_{\fR_{\con}^{\s-ss}}:\fR_{\con}^{\s-ss}\to Z^{\s-ss}$ is surjective by Theorem \ref{ss FH equiv ss FM} and $\pi|_{\fR_{\con}^{\s-s}}:\fR_{\con}^{\s-s}\to Z^{\s-s}$ is surjective by Theorem \ref{st FH equiv st FM in rank 2}. Further since $\pi|_{\fR_{\con}^{\s-ss}}$ is $\SL(V)$-equivariant, $\pi|_{\fR_{\con}^{\s-ss}}$ induces $p_{\s}$ after taking quotients by $\SL(V)$ on both sides. Let $\{U_{\alpha}\}_{\alpha}$ be an open cover of $\fQ$ so that $\fP|_{U_{\alpha}}$ is trivial. Then it is easy to see that for $U_{\alpha}$ containing $[q:V\otimes\cO_{X}\to E]\in\fQ$
$$(\widetilde{\pi}\times\id_{\bR})^{-1}((U_{\alpha}\times\bR)\cap Z^{\s-s})\cap\fR_{\con}^{\s-s}\cong((U_{\alpha}\times\bR)\cap Z^{\s-s})\times\pp^{1}$$
because $(\widetilde{\pi}\times\id_{\bR})^{-1}((E,\psi))\cap\fR_{\con}^{\s-s}=\pp(\cc\oplus\End((E,\psi)))\cong\pp^{1}$ for any $\s$-stable framed module $(E,\psi)$ by Lemma \ref{vanishing hom}. Since $\SL(V)$ acts on $\pp^{1}$ by a conjugation, it acts trivially. Hence
\begin{align*}
[(\widetilde{\pi}\times\id_{\bR})^{-1}((U_{\alpha}\times\bR)\cap Z^{\s-s})\cap\fR_{\con}^{\s-s}]/\!/\SL(V) & \cong[((U_{\alpha}\times\bR)\cap Z^{\s-s})\times\pp^{1}]/\!/\SL(V)\\
			&=((U_{\alpha}\times\bR)\cap Z^{\s-s})/\!/\SL(V)\times\pp^{1}.
\end{align*}

\item Since $G_{i}^{-}$ and $g_{i}^{-}$ are induced from the $\cc^{*}$-equivariant inclusions $\bOFH_{\con}^{ss}(-)\subset\bOFH_{\con}^{ss}(0)$ and $\bOFM^{ss}(-)\subset\bOFM^{ss}(0)$ respectively in the sense of \cite[Lemma 3.2]{Tha96} and we have the following commutative diagram with forgetful vertical morphisms
$$\xymatrix{\bOFH_{\con,\fr\ne0}^{ss}(-)\ar@{^{(}->}[r]\ar[d]&\bOFH_{\con,\fr\ne0}^{ss}(0)\ar[d]\\
\bOFM_{\fr\ne0}^{ss}(-)\ar@{^{(}->}[r]&\bOFM_{\fr\ne0}^{ss}(0),}$$
it is an immediate consequence.
\item Since $G_{i}^{+}$ and $g_{i}^{+}$ are induced from the $\cc^{*}$-equivariant inclusions $\bOFH_{\con}^{ss}(+)\subset\bOFH_{\con}^{ss}(0)$ and $\bOFM^{ss}(+)\subset\bOFM^{ss}(0)$ respectively in the sense of \cite[Lemma 3.2]{Tha96} and we have the following commutative diagram with forgetful vertical morphisms
$$\xymatrix{\bOFH_{\con,\fr\ne0}^{ss}(+)\ar@{^{(}->}[r]\ar[d]&\bOFH_{\con,\fr\ne0}^{ss}(0)\ar[d]\\
\bOFM_{\fr\ne0}^{ss}(+)\ar@{^{(}->}[r]&\bOFM_{\fr\ne0}^{ss}(0),}$$
it is an immediate consequence.
\item By Theorems \ref{ss FH equiv ss FM} and \ref{st FH equiv st FM in rank 2}, we have $$\bFH_{\con}^{\s'_{i}-ss}\setminus\bFH_{\con}^{\s'_{i}-s}=(p_{\s'_{i}})^{-1}(\bFM^{\s'_{i}-ss}\setminus\bFM^{\s'_{i}-s}).$$ Then $(p_{\s_{i-1}})^{-1}[(g_{i}^{-})^{-1}(\bFM^{\s'_{i}-ss}\setminus\bFM^{\s'_{i}-s})]=(G_{i}^{-})^{-1}(\bFH_{\con}^{\s'_{i}-ss}\setminus\bFH_{\con}^{\s'_{i}-s})$ by Theorem \ref{commutativity of full diagram}-(\ref{commutativity of full diagram1}) and $(p_{\s_{i}})^{-1}[(g_{i}^{+})^{-1}(\bFM^{\s'_{i}-ss}\setminus\bFM^{\s'_{i}-s})]=(G_{i}^{+})^{-1}(\bFH_{\con}^{\s'_{i}-ss}\setminus\bFH_{\con}^{\s'_{i}-s})$ by Theorem \ref{commutativity of full diagram}-(\ref{commutativity of full diagram2}). Hence we get the result by \cite[Corollary 7.15 of Chapter II]{Hart77}
\end{enumerate}
\end{proof}

\begin{lem}\label{nonzero framing GIT construction}
For $e,k\in\zz$ with $0<e\le k$, $\bOFH_{\fr\ne0}/\!/_{l_{k}^{e}}\cc^{*}\cong\bFH^{\s_{k}^{e}-ss}$ and $\bOFH_{\con,\fr\ne0}/\!/_{l_{k}^{e}}\cc^{*}\cong\bFH_{\con}^{\s_{k}^{e}-ss}$, where $l_{k}^{e}$ is the linearization of the $\cc^{*}$-action $\lambda_{e}^{k}$ on $\bOFH$ and $\s_{k}^{e}:=\dss\frac{d+2(1-g)}{2}\cdot\frac{e}{k}$ as in \cite[Section 2]{Sch00}.
\end{lem}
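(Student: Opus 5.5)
The first isomorphism is essentially a restatement of Proposition \ref{cc star flips}, that is, \cite[Theorem 2.20]{Sch00}. I would recall how Schmitt proves it. Points of $\bOFH$ with $\psi\ne0$ on which $\cc^{*}$ acts with a nontrivial weight are exactly the GIT-relevant points for $l_{k}^{e}$. Their $\lambda_{e}^{k}$-semistability with respect to $l_{k}^{e}$ is equivalent to $\s_{k}^{e}$-semistability of the framed Hitchin pair $(E,\ve,\phi,\psi)$; this is \cite[Lemma 2.3, Corollary 2.8]{Sch00}. The $\cc^{*}$-action $\lambda$ in (\ref{C*-action}) rescales $\psi$, equivalently $\delta$. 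Quotienting the locus with $\psi\ne0$ therefore removes the orientation datum $\delta$ up to the scalar $w$ in the definition of isomorphism. This identifies the quotient with $\bFH^{\s_{k}^{e}-ss}$, which by definition parametrizes pairs with nonzero framing. Since the points with $\psi=0$ are exactly the $\cc^{*}$-fixed points (up to $\delta$), removing them does not change the quotient of the relevant open part. So the first statement is Schmitt's result restricted to $\bOFH_{\fr\ne0}$.

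For the second isomorphism, I would first check that the constraint locus is a closed, $\cc^{*}$-invariant and $\SL(V)$-invariant subscheme. The constraint $\psi\circ\phi=w\psi$ for some $w\in\cc$ is invariant under $\psi\mapsto z\psi$, since $\phi$ is unchanged and $w$ is the same. It is also invariant under isomorphisms: conjugation $h$ with $\phi'\circ h=z\,h\circ\phi$ and $\psi'\circ h=w'\psi$ preserves the property. Closedness holds because the condition is that $\psi\circ\phi$ and $\psi$ are linearly dependent in $\Hom(E,\cH)$ with $\psi\ne0$ on the relevant locus. This is a rank condition, closed on the locus $\psi\ne0$, and closed overall after taking the closure of the family via the parameter scheme $\fR_{\con}$ introduced in the proof of Theorem \ref{commutativity of full diagram}.

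Then I use the standard GIT fact that for a closed invariant subscheme $Y\subset W$, semistability on $Y$ with respect to the restricted linearization agrees with semistability in $W$. The induced map $Y/\!/\cc^{*}\to W/\!/\cc^{*}$ is a closed immersion onto the image of $Y^{ss}$. Applying this with $W=\bOFH_{\fr\ne0}$ and $Y=\bOFH_{\con,\fr\ne0}$ gives $\bOFH_{\con,\fr\ne0}/\!/_{l_{k}^{e}}\cc^{*}$ as the closed subscheme of $\bFH^{\s_{k}^{e}-ss}$ given by the image of the constrained points. This image is precisely the locus $\bFH_{\con}^{\s_{k}^{e}-ss}$, by the invariance of the constraint under isomorphism.

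The main obstacle is that $\bOFH$ itself is a GIT quotient of a parameter scheme by $\SL(V)$, so "closed invariant subscheme" must be made precise. I would work on the level of $\fR\times(\text{orientation data})$, using the closed subscheme $\fR_{\con}$ and its orientation analogue. I would then use that $\SL(V)$ and $\cc^{*}$ commute, so the two-step quotient equals the quotient by $\SL(V)\times\cc^{*}$. The closed-immersion statement therefore holds on the total parameter space, and Schmitt's identification descends. In characteristic zero, GIT quotients commute with closed invariant subschemes by reductivity, which handles the scheme structure.
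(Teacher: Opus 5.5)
The proposal has a genuine gap, and it sits in the step that carries the actual content of the lemma.

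The first isomorphism is not a restatement of Proposition~\ref{cc star flips}. Schmitt's result is about $\bOFH/\!/_{l_{k}^{e}}\cc^{*}$, and what the lemma adds is the passage to the open locus $\bOFH_{\fr\ne0}$. You discard the points with $\psi=0$ because they are $\cc^{*}$-fixed. Fixedness is no reason to drop points from a $\cc^{*}$-quotient, since fixed points are exactly where polystable representatives live.

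The claim is also inaccurate as stated. Points with $\delta=0$, and split objects $(K_{\max},\ve,\phi|_{K_{\max}},0)\oplus(E',\ve,\phi|_{E'},\psi)$, are fixed too. The split objects are semistable at the walls and give the strictly semistable points of $\bFH^{\s'_{i}-ss}$. The framing-zero component itself is the one that produces $\bM(r,d,\cO_{X})$ in Proposition~\ref{cc star flips}.

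What you need is that every point with $\psi=0$ is $l_{k}^{e}$-\emph{unstable}. That depends on the sign of the weight relative to the linearization, i.e.\ on $e>0$, and your argument never uses this hypothesis. Your phrase that the points with $\psi\ne0$ are ``exactly the GIT-relevant points'' just asserts the conclusion.

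The paper proves it on the fibre $\bP_{2}=\pp\bigl((\Hom(\bigwedge^{2}V\otimes\cO_{\cJ},\fU_{\cJ})\oplus\rS^{2}\Hom(V,H^{0}(\cH)))^{\vee}\otimes\cO_{\cJ}\bigr)$ of Schmitt's construction. There $\cc^{*}$ acts with different weights on the orientation and framing summands. For $0<e\le k$, the subspace $\pp\bigl(\Hom(\bigwedge^{2}V\otimes\cO_{\cJ},\fU_{\cJ})^{\vee}\bigr)$, where the framing component vanishes, consists of unstable points. Hence $\bP_{2,\fr\ne0}/\!/_{l_{k}^{e}}\cc^{*}=\bP_{2}/\!/_{l_{k}^{e}}\cc^{*}$, and so $\bOFH_{\fr\ne0}/\!/_{l_{k}^{e}}\cc^{*}=\bOFH/\!/_{l_{k}^{e}}\cc^{*}\cong\bFH^{\s_{k}^{e}-ss}$ by \cite[Theorem 2.15]{Sch00}.

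For the second isomorphism you take a different and more explicit route than the paper's ``by the similar way''. You realize the constrained locus as a closed invariant subscheme and use that, for a linearly reductive group, $Y^{ss}=Y\cap W^{ss}$ and $Y/\!/G\to W/\!/G$ is a closed immersion onto the image of $Y^{ss}$. This is sound, and clearer than the paper on this point, with two adjustments:
\begin{itemize}
\item Apply it with $W=\bOFH$ and $Y=\bOFH_{\con}$ (or on the parameter scheme with $\SL(V)\times\cc^{*}$), not with the non-projective $\bOFH_{\fr\ne0}$.
\item You still need the instability of the framing-zero locus to replace $\bOFH_{\con}$ by $\bOFH_{\con,\fr\ne0}$, so this part inherits the gap above.
\end{itemize}
Also, no closure argument is needed. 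The condition ``$\psi\circ\phi=w\psi$ for some $w\in\cc$'' is exactly linear dependence of $\psi\circ\phi$ and $\psi$. That is a rank $\le1$ condition, closed everywhere and vacuous where $\psi=0$, so $\bOFH_{\con}$ is already closed as defined.
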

\begin{proof}
From the construction of \cite[Section 2]{Sch00}, $\bOFH_{\fr\ne0}\cong(\fT_{0}\cap\bt^{-1}(\bP_{1}\times\bP_{2,\fr\ne0}))/\!/\SL(V)$, where
$$\bP_{2,\fr\ne0}=\pp\left(\left(\Hom\left(\bigwedge^{2}V\otimes\cO_{\cJ},\fU_{\cJ}\right)\oplus(\rS^{2}\Hom(V,H^{0}(\cH))\setminus\{0\})\right)^{\vee}\otimes\cO_{\cJ}\right).$$
Since the set of semistable points of $\bP_{2}$ with respect to the linearization $l_{k}^{e}$ is nothing but
$$\bP_{2}\setminus\left(\pp\left(\Hom\left(\bigwedge^{2}V\otimes\cO_{\cJ},\fU_{\cJ}\right)^{\vee}\right)\cup\pp((\rS^{2}\Hom(V,H^{0}(\cH)))^{\vee})\right),$$
we have
{\small
$$\bP_{2,\fr\ne0}/\!/_{l_{k}^{e}}\cc^{*}=\bP_{2}/\!/_{l_{k}^{e}}\cc^{*}\cong\begin{cases}\pp\left(\Hom\left(\bigwedge^{2}V\otimes\cO_{\cJ},\fU_{\cJ}\right)^{\vee}\right)\times\pp((\rS^{2}\Hom(V,H^{0}(\cH)))^{\vee})&\text{if }0<e<k\\ \pp((\rS^{2}\Hom(V,H^{0}(\cH)))^{\vee})&\text{if }e=k.\end{cases}$$
}
Thus
\begin{align*}
\bOFH_{\fr\ne0}/\!/_{l_{k}^{e}}\cc^{*} & \cong(\fT_{0}\cap\bt^{-1}(\mathbf{P}_{1}\times\mathbf{P}_{2,\fr\ne0}))/\!/_{\l_{k}^{e}}\SL(V)\times\cc^{*} \\
                                                          & \cong(\fT_{0}\cap\bt^{-1}(\mathbf{P}_{1}\times\mathbf{P}_{2}))/\!/_{\l_{k}^{e}}\SL(V)\times\cc^{*} \\
                                                          & \cong\bOFH/\!/_{l_{k}^{e}}\cc^{*} \\
						       & \cong\bFH^{\s_{k}^{e}-ss}
\end{align*} 
for $0<e\le k$ by \cite[Theorem 2.15 ii) and iii)]{Sch00}.

By the similar way, we also have $\bOFH_{\con,\fr\ne0}/\!/_{l_{k}^{e}}\cc^{*}\cong\bFH_{\con}^{\s_{k}^{e}-ss}$.
\end{proof}

\begin{thm}\label{commutativity of C star filps}
There exists a surjective forgetful morphism $p_{\bOFH}\colon \bOFH_{\con}\to\bOFM$ such that
$$p_{\bOFH}^{-1}(\bOFM_{\fr\ne0}\setminus\bOFM_{\fr\ne0}^{s})=\bOFH_{\con,\fr\ne0}\setminus\bOFH_{\con,\fr\ne0}^{s}.$$
For $e,k\in\zz$ with $0<e\le k$, the GIT-quotient $\bOFH_{\con,\fr\ne0}/\!/_{l_{k}^{e}}\cc^{*}$ run through the moduli spaces $\bFH_{\con}^{\s-ss}$ for $\s\in\qq_{+}$. Further, the $\cc^{*}$-flips from $\bOFH_{\con,\fr\ne0}/\!/_{l_{k}^{e}}\cc^{*}$ commutes with the $\cc^{*}$-flips from $\bOFM_{\fr\ne0}/\!/_{l_{k}^{e}}\cc^{*}$ via the surjective forgetful morphism $\bOFH_{\con,\fr\ne0}/\!/_{l_{k}^{e}}\cc^{*}\to\bOFM_{\fr\ne0}/\!/_{l_{k}^{e}}\cc^{*}$ induced from $p_{\bOFH}$.
\end{thm}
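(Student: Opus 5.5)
The plan is to build $p_{\bOFH}$ at the level of GIT parameter spaces, exactly as $p_{\s}$ was built in Theorem \ref{commutativity of full diagram}(1). Then I transfer the stability comparison and pass to $\cc^{*}$-quotients using Lemma \ref{nonzero framing GIT construction}.

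\emph{Step 1: the morphism $p_{\bOFH}$.} Recall from \cite[Section 2]{Sch00} and \cite[Section 2]{OST99} that $\bOFH$ and $\bOFM$ are $\SL(V)$-quotients of parameter schemes. Schmitt's scheme (built from $\fT_0$, $\bt$, $\bP_1$, $\bP_2$) carries, over the quot scheme $\fQ$, the extra data $[\ve,\phi]\in\pp(\cO_{\fQ}\oplus\pi_{\fQ*}\EEnd\fE_{\fQ})$. Forgetting $[\ve,\phi]$ gives an $\SL(V)$-equivariant projection onto the parameter scheme of oriented framed modules, compatibly with the data $(\delta,\psi)$. Restrict this projection to the closed constrained locus, where $\psi\circ\phi=w\psi$.

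By Theorem \ref{ss OFH equiv ss OFM}, the constrained locus lies over the semistable locus of $\bOFM$. Every semistable oriented framed module $(E,\delta,\psi)$ lifts: for instance take $(\ve,\phi)=(1,0)$, or $\phi=\id$. So the restricted projection is surjective onto the semistable points. Equivariance then yields a surjective morphism $p_{\bOFH}\colon\bOFH_{\con}\to\bOFM$ on good quotients. The identity $p_{\bOFH}^{-1}(\bOFM_{\fr\ne0}\setminus\bOFM^{s}_{\fr\ne0})=\bOFH_{\con,\fr\ne0}\setminus\bOFH^{s}_{\con,\fr\ne0}$ follows pointwise from Theorem \ref{st OFH equiv st OFM in rank 2}, since $\psi$ is unchanged by $p_{\bOFH}$.

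\emph{Step 2: the $\cc^{*}$-quotients.} The $\cc^{*}$-action (\ref{C*-action}) and its modifications $\lambda_e^k$ act only on $(\delta,\psi)$. Hence $p_{\bOFH}$ is $\cc^{*}$-equivariant, and the linearizations $l_k^e$ on $\bOFH_{\con}$ can be taken as pullbacks of those on $\bOFM$ (up to positive tensor powers). By Lemma \ref{nonzero framing GIT construction}, $\bOFH_{\con,\fr\ne0}/\!/_{l_k^e}\cc^{*}\cong\bFH_{\con}^{\s_k^e-ss}$, and the analogous statement for framed modules holds by \cite{OST99}. As $e/k$ ranges over the rationals in $(0,1]$, $\s_k^e$ ranges over all of $\qq_{+}$ in the relevant range up to $\s_\infty$. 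Thus the quotients run through $\bFH_{\con}^{\s-ss}$, and $p_{\bOFH}$ descends to the surjective forgetful maps $p_{\s}$ of Theorem \ref{commutativity of full diagram}(1).

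\emph{Step 3: commutativity of the flips.} Thaddeus' $\cc^{*}$-flips \cite[Theorem 3.3]{Tha96} arise from the inclusions $X^{ss}(\pm)\subset X^{ss}(0)$ at a wall. Equivariance together with the pullback linearization gives $p_{\bOFH}^{-1}(\bOFM^{ss}(\pm))\supseteq\bOFH_{\con}^{ss}(\pm)$. I expect equality, by Theorem \ref{ss FH equiv ss FM} applied in the adjacent chambers. This yields the commuting squares of Theorem \ref{commutativity of full diagram}(2),(3). The blow-ups commute by part (4) of that theorem, using the preimage identity from Step 1.

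\emph{Main obstacle.} The hardest point is justifying that GIT (semi)stability for $\cc^{*}$ on $\bOFH_{\con}$ is exactly the pullback of that on $\bOFM$. This is needed so that the quotients match and the walls coincide. My first approach is the Hilbert–Mumford criterion for $\cc^{*}$: because the weights depend only on $(\delta,\psi)$ and $p_{\bOFH}$ is equivariant, the limits $\lim_{z\to0,\infty}$ are computed on the $(\delta,\psi)$-coordinates. Those coordinates are preserved by $p_{\bOFH}$, so the numerical criterion gives the same answer upstairs and downstairs.
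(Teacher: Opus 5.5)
Your Steps 1 and 2 follow the paper. The forgetful map, its surjectivity and the preimage identity come from Theorems \ref{ss OFH equiv ss OFM} and \ref{st OFH equiv st OFM in rank 2}, and the quotients are identified by Lemma \ref{nonzero framing GIT construction}. However, the argument you give for the main obstacle fails, because it rests on a false claim: that $l_k^e$ on $\bOFH_{\con}$ may be taken to be a power of the pullback of $l_k^e$ on $\bOFM$. Over stable points $p_{\bOFH}$ has fibres $\pp(\cc\oplus\End((E,\psi)))\cong\pp^{1}$, by Lemma \ref{vanishing hom} and Theorem \ref{commutativity of full diagram}(1). A pulled-back bundle is trivial on these fibres, so it is not ample, and its invariant sections essentially come from $\bOFM$. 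It cannot be Schmitt's linearization, whose quotient $\bFH_{\con}^{\s_k^e-ss}$ is a $\pp^1$-bundle over $\bFM^{\s_k^e-s}$. Schmitt's linearization genuinely involves the Higgs-field factor of his parameter space. Hence the Hilbert--Mumford comparison has no basis. Equivariance and properness do give $p_{\bOFH}(\lim_{z\to0}\lambda(z)y)=\lim_{z\to0}\lambda(z)p_{\bOFH}(y)$. (The limits are not simply computed on the $(\delta,\psi)$-coordinates, since the limiting object can split along a $\phi$-invariant filtration.) But the weights at the two limit points are measured in unrelated line bundles, so nothing forces their signs to agree. The inclusion $p_{\bOFH}^{-1}(\bOFM^{ss}(\pm))\supseteq\bOFH_{\con}^{ss}(\pm)$ and the descent of $p_{\bOFH}$ to $p_{\s}$ in your Steps 2--3 rest on the same false claim.

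The paper obtains the missing step without comparing linearizations at all. By Lemma \ref{nonzero framing GIT construction} (i.e.\ \cite[Theorem 2.15]{Sch00}) and its framed-module analogue from \cite[Section 2.5]{OST99}, the $\cc^{*}$-semistable locus on each side, for its own $l_k^e$, consists of objects with nonzero framing whose underlying framed Hitchin pair, resp.\ framed module, is $\s_k^e$-semistable. The strictly semistable loci are then described by (\ref{inverse image of qOFH}) and (\ref{inverse image of qOFM}). Theorem \ref{ss FH equiv ss FM} shows that the semistable locus of $\bOFH_{\con,\fr\ne0}$ is exactly the preimage of that of $\bOFM_{\fr\ne0}$ and surjects onto it. So $p_{\bOFH}$ descends to $p_{\s_k^e}$, and the square with $q_{\bOFH}$ and $q_{\bOFM}$ commutes. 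Corollary \ref{st FH equiv st FM in rank 2}, together with Theorems \ref{ss OFH equiv ss OFM} and \ref{st OFH equiv st OFM in rank 2}, shows that the strictly semistable loci are preimages of one another, which is what makes the wall-crossings compatible. Your remark in Step 3, that equality should follow from Theorem \ref{ss FH equiv ss FM} in the adjacent chambers, is the correct argument. It should replace the Hilbert--Mumford reasoning, not supplement it.
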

\begin{proof}
By Theorems \ref{ss OFH equiv ss OFM} and \ref{st OFH equiv st OFM in rank 2}, the first statement holds.

Note that $\bOFH_{\con,\fr\ne0}$ (resp. $\bOFM_{\fr\ne0}$) is invariant under the $\cc^{*}$-action on $\bOFH$ (resp. $\bOFM$) (see (\ref{C*-action})). Let $q_{\bOFH}\colon \bOFH_{\con,\fr\ne0}\to\bOFH_{\con,\fr\ne0}/\!/_{l_{k}^{e}}\cc^{*}$ and $q_{\bOFM}\colon \bOFM_{\fr\ne0}\to\bOFM_{\fr\ne0}/\!/_{l_{k}^{e}}\cc^{*}$ be the morphisms of GIT quotients (cf. \cite[Section 2.8]{OST99}). Since $0<e\le k$, we obtain $\bOFH_{\con,\fr\ne0}/\!/_{l_{k}^{e}}\cc^{*}\cong\bFH_{\con}^{\s_{k}^{e}-ss}$ by Lemma \ref{nonzero framing GIT construction}. Thus
\begin{equation}\label{inverse image of qOFH}q_{\bOFH}^{-1}(\bFH_{\con}^{\s_{k}^{e}-ss}\setminus\bFH_{\con}^{\s_{k}^{e}-s})=\bOFH_{\con,\fr\ne0}\setminus\bOFH_{\con,\fr\ne0}^{s},\end{equation}
which implies that the second statement holds by Proposition \ref{cc star flips}.

Since $0<e\le k$, we also have $\bOFM_{\fr\ne0}/\!/_{l_{k}^{e}}\cc^{*}\cong\bFM^{\s_{k}^{e}-ss}$ by \cite[Section 2.5]{OST99} and the similar way as the proof of Lemma \ref{nonzero framing GIT construction}. Then
\begin{equation}\label{inverse image of qOFM}q_{\bOFM}^{-1}(\bFM^{\s_{k}^{e}-ss}\setminus\bFM^{\s_{k}^{e}-s})=\bOFM_{\fr\ne0}\setminus\bOFM_{\fr\ne0}^{s}.\end{equation}

Thus Theorems \ref{ss FH equiv ss FM}, \ref{st FH equiv st FM in rank 2}, \ref{ss OFH equiv ss OFM} and \ref{st OFH equiv st OFM in rank 2} show that the following diagram commutes with surjective vertical morphisms satisfying (\ref{inverse image of qOFH}) and (\ref{inverse image of qOFM})
$$\xymatrix{\bOFH_{\con,\fr\ne0}\setminus\bOFH_{\con,\fr\ne0}^{s}\ar[r]^{p_{\bOFH}}\ar[d]_{q_{\bOFH}}&\bOFM_{\fr\ne0}\setminus\bOFM_{\fr\ne0}^{s}\ar[d]^{q_{\bOFM}}\\
\bFH_{\con}^{\s_{k}^{e}-ss}\setminus\bFH_{\con}^{\s_{k}^{e}-s}\ar[r]_{p_{\s_{k}^{e}}}&\bFM^{\s_{k}^{e}-ss}\setminus\bFM^{\s_{k}^{e}-s},}$$
which implies that the third statement holds.
\end{proof}

By Theorems \ref{commutativity of full diagram} and \ref{commutativity of C star filps}, we have the following commutative diagram
\begin{equation}\label{FHcontoFM}
{\tiny\xymatrix{&\tbFH_{1,\con}^{-}\ar@/_/[ddd]_{\tp_{1}}\cong\tbFH_{1,\con}^{+}\ar[dl]_{F_{1}^{-}}\ar[dr]^{F_{1}^{+}}&&&&\tbFH_{t,\con}^{-}\ar@/^/[ddd]^{\tp_{t}}\cong\tbFH_{t,\con}^{+}\ar[dl]_{F_{t}^{-}}\ar[dr]^{F_{t}^{+}}&\\
\bFH_{\con}^{\s_{0}-s}\ar[dr]_{G_{1}^{-}}\ar[d]_{\iota_{\bM}}\ar@/_/[ddd]_{p_{\s_{0}}}&&\bFH_{\con}^{\s_{1}-s}\ar[dl]^{G_{1}^{+}}\ar[ddd]^{p_{\s_{1}}}\!\!\!\!\!&\!\!\!\cdots\!\!\!&\!\!\!\!\!\bFH_{\con}^{\s_{t-1}-s}\ar[dr]_{G_{t}^{-}}\ar[ddd]_{p_{\s_{t-1}}}&&\bFH_{\con}^{\s_{t}-s}\ar[dl]^{G_{t}^{+}}\ar@/^/[ddd]^{p_{\s_{t}}}\\
\bM_{\con}&\bFH_{\con}^{\s'_{1}-ss}\ar[ddd]^{p_{\s'_{1}}}&&&&\bFH_{\con}^{\s'_{t}-ss}\ar[ddd]^{p_{\s'_{t}}}&\\
&\tbFM_{1}^{-}\cong\tbFM_{1}^{+}\ar[dl]_{f_{1}^{-}}\ar[dr]^{f_{1}^{+}}&&&&\tbFM_{t}^{-}\cong\tbFM_{t}^{+}\ar[dl]_{f_{t}^{-}}\ar[dr]^{f_{t}^{+}}&\\
\bFM^{\s_{0}-s}\ar[dr]_{g_{1}^{-}}\ar[d]_{\iota_{\bU}}&&\bFM^{\s_{1}-s}\ar[dl]^{g_{1}^{+}}\!\!\!\!\!&\!\!\!\cdots\!\!\!&\!\!\!\!\!\bFM^{\s_{t-1}-s}\ar[dr]_{g_{t}^{-}}&&\bFM^{\s_{t}-s}\ar[dl]^{g_{t}^{+}}\\
\bU(2,d)&\bFM^{\s'_{1}-ss}&&&&\bFM^{\s'_{t}-ss}&}}
\end{equation}

\section{Application}\label{application}
Now we assume that $g\ge2$. We investigate the surjective commutative forgetful diagram from the chain of $\cc^{*}$-flips of $\bFH_{\con}^{\s-ss}(2,d,\cO_{X},\cO_{X})$ to the chain of $\cc^{*}$-flips of $\bFM^{\s-ss}(2,d,\cO_{X})$ of Theorem \ref{commutativity of C star filps} more explicitly.

We first collect some of results on the chain of $\cc^{*}$-flips of $\bFM^{\s-ss}(2,d,\cO_{X})$ which can be verified from \cite{HL95a, Tha94, Tha96}.
\begin{enumerate}
\item[1.] \cite[{cf. (1.3)}]{Tha94} $\bFM^{\s-ss}(2,d,\cO_{X})\ne\emptyset$ if and only if $\s\le-d$.
\item[2.] \cite[{cf. Proposition 2.3}]{HL95a} There exists a discrete set of rationals $0\le\cdots<\eta_{i}<\eta_{i+1}<\cdots\le-d$ including $0$, such that for $\s\in(\eta_{i},\eta_{i+1})$ every $\s$-semistable pair is $\s$-stable, where $\eta_{i}=\max\{0,2i+d\}$. The $\s$-stability conditions depend only on $i$.
\item[3.] \cite[{cf. Proposition 2.9}]{HL95a} Let $\eta_{i}=\max\{0,2i+d\}$, and $$\s'_{1}:=\eta_{\lfloor-d/2-1\rfloor+2}=\begin{cases}1&\text{if }d\text{ is odd}\\2&\text{if }d\text{ is even},\end{cases} \s'_{i}:=\eta_{\lfloor-d/2-1\rfloor+i+1} \text{ and } \s'_{t}:=\eta_{-d-1}=-d-2.$$
\begin{itemize}
\item[(i)] For $0<\s<\s'_{1}$, there is a morphism
$$\bFM^{\s-s}(2,d,\cO_{X})\to\bU(2,d)$$
given by $(E,\psi)\mapsto E$. The fiber over a stable bundle $E$ is isomorphic to $\pp(\Hom(E,\cO_{X})^{\vee})$.
\item[(ii)] Assume that $\s>\s'_{t}$. Then $(E,\psi)\in\bFM^{\s-s}(2,d,\cO_{X})$ if and only if there exists a nonsplit extension
$$\xymatrix{0\ar[r]&F\ar[r]&E\ar[r]^{\psi}&\cO_{X}\ar[r]&0}$$
for some $F\in\Pic^{d}(X)$ with $F=\ker\psi$. Thus
$$\bFM^{\s-s}(2,d,\cO_{X})\cong\pp R^{1}\pi_{\Pic^{d}(X)*}\cP,$$
where $\cP$ is a Poincar\'{e} line bundle on $\Pic^{d}(X)\times X$ and $\pi_{\Pic^{d}(X)}:\Pic^{d}(X)\times X\to\Pic^{d}(X)$ is the projection onto the first factor.
\end{itemize}
\item[4.] \cite[{cf. (2.1)}]{Tha94}
If $(E,\psi)\in\bFM^{\s-s}(2,d,\cO_{X})$, then
\begin{itemize}
\item[(i)] the Zariski tangent space $T_{(E,\psi)}\bFM^{\s-s}(2,d,\cO_{X})$ at $(E,\psi)$ is canonically isomorphic to $H^{1}$ of the complex
$$\xymatrix{C^{0}(\End E)\ar[r]^{p\,\,\,\,\,\,\,\,\,\,\,\,\,\,\,\,}&C^{1}(\End E)\oplus C^{0}(E^{\vee})\ar[r]^{\,\,\,\,\,\,\,\,\,\,\,\,\,\,\,\,\,\,\,\,\,\,\,\,\,q}&C^{1}(E^{\vee})},$$
where $p(u)=(-du,\psi\circ u)$ and $q(v,a)=\psi\circ v+da$;
\item[(ii)] $H^{0}$ and $H^{2}$ of this complex vanish;
\item[(iii)] there is a natural exact sequence
$$0\to H^{0}(\End E)\to H^{0}(E^{\vee})\to T_{(E,\psi)}\bFM^{\s-s}(2,d,\cO_{X})\to H^{1}(\End E)\to H^{1}(E^{\vee})\to0$$
\end{itemize}
\item[5.] \cite[{cf. (2.2)}]{Tha94} If $(E,\psi)\in\bFM^{\s-s}(2,d,\cO_{X})$, then $\dim T_{(E,\psi)}\bFM^{\s-s}(2,d,\cO_{X})=-d+2g-2$.
\item[6.] Let $\bFM^{i}:=\bFM^{\s-s}(2,d,\cO_{X})$ for $\s\in(\max\{0,2i+d\},2i+2+d)$. Note that $0\le\lfloor-d/2-1\rfloor+1\le i\le-d-1$. Let $W_{i}^{-}:=R^{1}\pi_{*}\bM^{-1}(\Delta)$ and $W_{i}^{+}:=R^{0}\pi_{*}\cO_{\Delta}\otimes\bM$, where $\bM$ is the Poincar\'{e} bundle over $\Pic^{i+1}X\times X$, $\Delta\subset S^{-d-i-1}X\times X$ and $\pi:\Pic^{i+1}X\times S^{-d-i-1}X\times X\to\Pic^{i+1}X\times S^{-d-i-1}X$ is the projection onto the first and second factors. Note that $\rk W_{i}^{-}=d+g+2i+1$ and $\rk W_{i}^{+}=-d-i-1$.
\begin{enumerate}
\item \cite[{cf. (3.2)}]{Tha94} There is a family over $\pp W_{i}^{-}$ parametrizing exactly those framed modules which are represented in $\bFM^{i}$ but not $\bFM^{i+1}$.
\item \cite[{cf. (3.3)}]{Tha94} There is a family over $\pp W_{i}^{+}$ parametrizing exactly those framed modules which are represented in $\bFM^{i+1}$ but not $\bFM^{i}$.
\item \cite[{cf. (3.4)}]{Tha94} For $i=-d-2$, the inclusion
$$\iota:\Pic^{-d-1}X\times X\cong\Pic^{-d}X\times X\hookrightarrow\pp R^{1}\pi_{\Pic^{d}(X)*}\cP$$
is given by
$$\iota_{\{M\}\times X}:X\hookrightarrow\pp H^{1}(M^{-1})$$
induced by the linear system $|K_X M|$.
\item \cite[{cf. (3.6)}]{Tha94} $\bFM^{i}$ are all smooth projective varieties of dimension $-d+2g-2$, and for $i<-d-2$, there is a birational map $\bFM^{i}\leftrightarrow\bFM^{\s_{t-1}-s}$, which is an isomorphism except on sets of codimension $\ge2$.
\end{enumerate}
\end{enumerate}

We may assume that $d<0$ by Item 1 mentioned above. Now we investigate the diagram (\ref{FHcontoFM}) in Section \ref{cstar flips} explicitly.

\begin{prop}\label{blowup of FM}
\begin{enumerate}
\item $\bFM^{\s_{i-1}-s}=\bFM^{\lfloor-d/2-1\rfloor+i}$ and $f_{i}^{-}\colon \tbFM_{i}^{-}\to\bFM^{\s_{i-1}-s}$ is the blow-up along $\pp W_{\lfloor-d/2-1\rfloor+i}^{-}$ of codimension $\ge2$ for $i<-d-2$.
\item $\bFM^{\s_{i}-s}=\bFM^{\lfloor-d/2-1\rfloor+i+1}$ and $f_{i}^{+}\colon \tbFM_{i}^{+}\to\bFM^{\s_{i}-s}$ is the blow-up along $\pp W_{\lfloor-d/2-1\rfloor+i}^{+}$ of codimension $\ge2$ for $i<-d-2$.
\item $\tbFM_{i}^{-}\cong\tbFM_{i}^{+}$ for $i<-d-2$.
\item $\bFM^{\s_{t-1}-s}=\bFM^{-d-2}$ is the blow-up of $\bFM^{\s_{t}-s}=\pp R^{1}\pi_{\Pic^{d}(X)*}\cP$ along $\Pic^{-d-1}X\times X$.
\end{enumerate}
\end{prop}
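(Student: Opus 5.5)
We match the chamber structure of Remark \ref{cc star flips via framed modules} with Thaddeus' indexing in Item 6, and then read off the blow-up descriptions from \cite{Tha94} and the general $\cc^{*}$-flip picture of \cite[Theorem 3.3]{Tha96}.

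\emph{Step 1 (indexing).} By Item 3 the walls of $\bFM^{\s-ss}(2,d,\cO_{X})$ in $(0,-d)$ are exactly the $\eta_{j}=2j+d>0$. We label them $\s'_{i}=\eta_{\lfloor-d/2-1\rfloor+i+1}$, so $\s'_{1}=1$ or $2$ according to the parity of $d$, and $\s'_{t}=-d-2$. The chamber $I_{i-1}=(\s'_{i-1},\s'_{i})$ is then $(\max\{0,2j+d\},2j+2+d)$ with $j=\lfloor-d/2-1\rfloor+i$. Since by Item 2 the stability condition is constant on each chamber, we get $\bFM^{\s_{i-1}-s}=\bFM^{\lfloor-d/2-1\rfloor+i}$ and $\bFM^{\s_{i}-s}=\bFM^{\lfloor-d/2-1\rfloor+i+1}$. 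This gives the first assertions of (1), (2) and (4).

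\emph{Step 2 (centres and codimensions).} With $j$ as in Step 1, the map $g_{i}^{-}$ is an isomorphism outside the locus of objects that are $\s_{i-1}$-stable but not $\s_{i}$-stable. These are exactly the objects that become strictly $\s'_{i}$-semistable, and by Item 6(a) this locus is the image of $\pp W_{j}^{-}$. Symmetrically, by Item 6(b) the corresponding locus for $g_{i}^{+}$ is the image of $\pp W_{j}^{+}$. Hence $(g_{i}^{\pm})^{-1}(\bFM^{\s'_{i}-ss}\setminus\bFM^{\s'_{i}-s})=\pp W_{j}^{\pm}$, and the definitions of $f_{i}^{\pm}$ in Remark \ref{cc star flips via framed modules} give (1) and (2).

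For the codimension, note that $\pp W_{j}^{-}$ and $\pp W_{j}^{+}$ are both projective bundles over $\Pic^{j+1}X\times S^{-d-j-1}X$. Their dimensions are $g+(-d-j-1)+\rk W_{j}^{\mp}-1$, where $\rk W_{j}^{-}=d+g+2j+1$ and $\rk W_{j}^{+}=-d-j-1$. The ambient dimension is $-d+2g-2$ by Item 6(d). It follows that $\operatorname{codim}\pp W_{j}^{-}=\rk W_{j}^{+}$ and $\operatorname{codim}\pp W_{j}^{+}=\rk W_{j}^{-}$. The main obstacle is checking that both ranks are $\ge2$ exactly when $i<-d-2$, that is, when $j<-d-2$. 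For $\rk W_{j}^{+}=-d-j-1$ this is immediate. For $\rk W_{j}^{-}$, the bound $j\ge\lfloor-d/2-1\rfloor+1$ together with $g\ge2$ gives it; the first wall is where this is tightest and needs separate care. Item 6(d) supplies the same codimension bound and can be used as a cross-check.

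\emph{Step 3 (the flip).} Statement (3) is the general fact of \cite[Theorem 3.3]{Tha96} that the two blow-ups of a $\cc^{*}$-flip coincide. Since both sides are smooth by Item 6(d), it is already contained in Remark \ref{cc star flips via framed modules}, and I only need to note that the exceptional divisor is $\pp W_{j}^{-}\times_{B}\pp W_{j}^{+}$ over the common base $B=\Pic^{j+1}X\times S^{-d-j-1}X$.

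\emph{Step 4 (the last wall).} For the last wall $j=-d-2$ we have $\rk W^{+}=1$. So $\pp W^{+}\cong\Pic^{-d-1}X\times X$ is a divisor in $\bFM^{-d-2}$, and $f_{t}^{+}$ is an isomorphism. Hence $\tbFM_{t}\cong\bFM^{-d-2}$, and $f_{t}^{-}$ exhibits $\bFM^{-d-2}$ as the blow-up of $\bFM^{\s_{t}-s}=\pp R^{1}\pi_{\Pic^{d}(X)*}\cP$ along $\pp W^{-}_{-d-2}$. By Item 6(c) and Item 3(ii), $\pp W^{-}_{-d-2}$ is the embedded copy $\iota(\Pic^{-d-1}X\times X)$, which gives (4).
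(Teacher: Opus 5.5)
Your Steps 1--3 follow the paper's route. The paper proves (1)--(3) simply by citing Items 6(a), 6(b), 6(d) and \cite[Theorem 3.5]{Tha96}; it uses that theorem, the smooth blow-up description, rather than Theorem 3.3. Your count $\operatorname{codim}\pp W_j^{-}=\rk W_j^{+}$, $\operatorname{codim}\pp W_j^{+}=\rk W_j^{-}$ is a correct explicit form of the codimension claim. Two small points there. First, the range condition must be read as $j=\lfloor-d/2-1\rfloor+i<-d-2$ (i.e.\ $i<t$), which is not literally the same as $i<-d-2$. Second, the first wall needs no special care: $j\ge\lfloor-d/2-1\rfloor+1$ gives $d+2j+1\ge0$, hence $\rk W_j^{-}\ge g\ge2$.

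Step 4, however, interchanges $+$ and $-$ throughout, and as written several of its claims are false.
\begin{enumerate}
\item By Item 6(b) and your own Step 2, $\pp W^{+}_{-d-2}$ is the centre of $f_t^{+}$ inside $\bFM^{\s_t-s}=\bFM^{-d-1}$, not a subvariety of $\bFM^{-d-2}$.
\item By your formula its codimension there is $\rk W^{-}_{-d-2}=-d+g-3$. So it is not a divisor, and $f_t^{+}$ is not an isomorphism.
\item The actual divisor is $\pp W^{-}_{-d-2}\subset\bFM^{-d-2}$, of codimension $\rk W^{+}_{-d-2}=1$. This is exactly the paper's count $\dim\pp W^{-}_{-d-2}=-d+2g-3$.
\item $f_t^{-}$ has target $\bFM^{\s_{t-1}-s}=\bFM^{-d-2}$, so it cannot present anything as a blow-up of $\pp R^{1}\pi_{\Pic^{d}(X)*}\cP$.
\item $\pp W^{-}_{-d-2}$ has dimension $-d+2g-3$, so it is not $\iota(\Pic^{-d-1}X\times X)$, which has dimension $g+1$; it is a $\pp^{-d+g-4}$-bundle over it.
\end{enumerate}

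The correct argument runs as follows. $f_t^{-}$ is the blow-up of the smooth variety $\bFM^{-d-2}$ along the smooth divisor $\pp W^{-}_{-d-2}$, hence an isomorphism, so $\tbFM_t\cong\bFM^{-d-2}$. Then $f_t^{+}$ exhibits $\bFM^{-d-2}$ as the blow-up of $\bFM^{-d-1}=\pp R^{1}\pi_{\Pic^{d}(X)*}\cP$ (Item 3(ii)) along $\pp W^{+}_{-d-2}\cong\Pic^{-d-1}X\times X$. By Item 6(c) this centre is $\iota(\Pic^{-d-1}X\times X)$, and the exceptional divisor is $\pp W^{-}_{-d-2}$. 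Your stated conclusion for (4) is right, but only after every sign in Step 4 is swapped.
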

\begin{proof}
\begin{enumerate}
\item It is an immediate consequence from Item 6-(a) and 6-(d).
\item It is an immediate consequence from Item 6-(b) and 6-(d).
\item It is an immediate consequence from \cite[Theorem 3.5]{Tha96}.
\item It is an immediate consequence from Item 6-(c), \cite[Theorem 3.5]{Tha96} and 
$$\dim\pp W_{-d-2}^{-}=d+2g+2i-d-i-1=2g+i-1=-d+2g-3.$$
\end{enumerate}
\end{proof}

We obtain explicit formulas for Poincar\'{e} polynomials of $\bFM^{i},\ \bU(2,d)$ and $\bM_{\con}$ as follows.
\begin{prop}
$\dss P_{t}(\bFM^{i})=-\frac{(1+t)^{2g}}{1-t^{2}}\underset{x^{-d-i-1}}{\coeff}\left(\frac{t^{2d+2g+4i+2}}{1-xt^{4}}-\frac{t^{-2d-2i}}{t^{2}-x}\right)\frac{(1+xt)^{2g}}{(1-x)(1-xt^{2})}$.
\end{prop}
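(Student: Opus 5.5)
We compute $P_{t}(\bFM^{i})$ by induction along the chain of flips, starting from the final chamber and moving down one wall at a time. At each wall we use the exact change in cohomology given by Thaddeus' description of the flip. At the top end, Item 3-(ii) gives $\bFM^{-d-1}=\bFM^{\s_{t}-s}\cong\pp R^{1}\pi_{\Pic^{d}(X)*}\cP$. For $d<0$ we have $H^{0}(M)=0$ for $M\in\Pic^{d}(X)$, so $R^{1}\pi_{*}\cP$ is a vector bundle of rank $-d+g-1$ over $\Pic^{d}(X)$. By the Leray--Hirsch theorem,
$$P_{t}(\bFM^{-d-1})=(1+t)^{2g}\,\frac{1-t^{2(-d+g-1)}}{1-t^{2}}.$$

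The inductive step uses Item 6 and Proposition \ref{blowup of FM}. Passing from $\bFM^{i+1}$ to $\bFM^{i}$ replaces a $\pp W_{i}^{+}$-bundle by a $\pp W_{i}^{-}$-bundle over $\Pic^{i+1}X\times S^{-d-i-1}X$. The ranks are $\rk W_{i}^{-}=d+g+2i+1$ and $\rk W_{i}^{+}=-d-i-1$. Both $\bFM^{i}$ and $\bFM^{i+1}$ blow up to the common space $\tbFM$ along these loci, and the exceptional divisors are both equal to $\pp W_{i}^{-}\times_{B}\pp W_{i}^{+}$ over the base $B=\Pic^{i+1}X\times S^{-d-i-1}X$. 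Applying the blow-up formula for Poincar\'e polynomials to both sides and cancelling the common term gives
$$P_{t}(\bFM^{i})-P_{t}(\bFM^{i+1})=P_{t}(\Pic^{i+1}X)\,P_{t}(S^{-d-i-1}X)\,\frac{t^{2\rk W_{i}^{+}}-t^{2\rk W_{i}^{-}}}{1-t^{2}}.$$
At the last wall, $i=-d-2$, the same formula is an instance of Proposition \ref{blowup of FM}(4), since there $\rk W^{+}=1$. We have $P_{t}(\Pic^{i+1}X)=(1+t)^{2g}$. Macdonald's formula gives
$$P_{t}(S^{n}X)=\coeff_{x^{n}}\frac{(1+xt)^{2g}}{(1-x)(1-xt^{2})}.$$

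Summing the differences from the wall at $i$ up to $-d-2$ and adding the base case gives a finite sum of coefficient extractions. The remaining task is to show that this sum collapses to the single coefficient extraction at $x^{-d-i-1}$ in the statement, and this is the main computational obstacle. The plan is to reindex by $n=-d-j-1$ and write $t^{2\rk W_{j}^{-}}$ in terms of $n$. This gives the geometric weight $t^{2d+2g+4i+2}\,t^{-4(n'-n)}$, which is produced by multiplying by $1/(1-xt^{4})$. For the $W^{+}$ term we write $t^{2n}$ as $t^{-2d-2i}$ times a geometric factor, which is produced by $1/(t^{2}-x)$ after expanding in $x/t^{2}$.

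Multiplying the Macdonald generating function by these geometric series converts the sum over $j\ge i$ of the coefficients at $x^{-d-j-1}$ into one coefficient extraction at $x^{-d-i-1}$. The base case is absorbed into the boundary terms of the series. The overall sign and the factor $1/(1-t^{2})$ come from the difference $t^{2\rk W^{+}}-t^{2\rk W^{-}}$. The last check is that the $x^{0}$ boundary term reproduces $(1+t)^{2g}(1-t^{2(-d+g-1)})/(1-t^{2})$, with the sign convention of $1/(t^{2}-x)=-t^{-2}/(1-x t^{-2})\cdot(-1)$. Careful bookkeeping of the exponent offsets is all that remains.
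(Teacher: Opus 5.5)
Your proposal is correct and follows the paper's proof. You telescope the wall-crossing differences $P_{t}(\pp W_{j}^{+})-P_{t}(\pp W_{j}^{-})$, computed via Leray--Hirsch and Macdonald, down from the final chamber $\bFM^{-d-1}\cong\pp R^{1}\pi_{*}\cP$; you note that the base case is exactly the $j=-d-1$ ($x^{0}$) term; and you collapse the sum by shifting with $x^{j-i}$ into the geometric series $1/(1-xt^{4})$ and $1/(t^{2}-x)$, whose excess terms have $x$-degree $>-d-i-1$. The only slip is cosmetic: with $n'=-d-i-1$ the weight is $t^{2d+2g+4i+2}\,t^{4(n'-n)}$ (since $n'-n=j-i\ge0$), not $t^{-4(n'-n)}$, and this is what $1/(1-xt^{4})$ actually produces.
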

\begin{proof}
By Proposition \ref{blowup of FM},
$$P_{t}(\bFM^{j+1})-P_{t}(\bFM^{j})=P_{t}(\pp W_{j}^{+})-P_{t}(\pp W_{j}^{-}).$$
The RHS is
 \begin{align*}
	P_{t}(\pp W_{j}^{+})-P_{t}(\pp W_{j}^{-}) & =\big(P_{t}(\pp^{-d-j-2})-P_{t}(\pp^{d+g+2j})\big)P_{t}(\Pic^{j+1}X)P_{t}(S^{-d-j-1}X)\\
									    & =\left(\frac{1-t^{2(-d-j-1)}}{1-t^{2}}-\frac{1-t^{2(d+g+2j+1)}}{1-t^{2}}\right)(1+t)^{2g}P_{t}(S^{-d-j-1}X)\\
								            & =\frac{t^{2d+2g+4j+2}-t^{-2d-2j-2}}{1-t^{2}}(1+t)^{2g}P_{t}(S^{-d-j-1}X).
\end{align*}
From $\dss P_{t}(S^{-d-j-1}X)=\underset{x^{-d-j-1}}{\coeff}\frac{(1+xt)^{2g}}{(1-x)(1-xt^{2})}$ by \cite{Mac62}, we have
$$P_{t}(\bFM^{j+1})-P_{t}(\bFM^{j})=\underset{x^{-d-j-1}}{\coeff}\frac{(t^{2d+2g+4j+2}-t^{-2d-2j-2})(1+t)^{2g}(1+xt)^{2g}}{(1-t^{2})(1-x)(1-xt^{2})}.$$
And so
$$\underset{x^{0}}{\coeff}\frac{(t^{-2d+2g-2}-1)(1+t)^{2g}(1+xt)^{2g}}{(1-t^{2})(1-x)(1-xt^{2})}=\frac{(t^{-2d+2g-2}-1)(1+t)^{2g}}{(1-t^{2})}=-P_{t}(\bFM^{-d-1})$$
gives
{\small
\begin{align*}
	P_{t}(\bFM^{i}) & =-\sum_{j=i}^{-d-1}\underset{x^{-d-j-1}}{\coeff}\frac{(t^{2d+2g+4j+2}-t^{-2d-2j-2})(1+t)^{2g}(1+xt)^{2g}}{(1-t^{2})(1-x)(1-xt^{2})}\\
			       & =-\frac{(1+t)^{2g}}{1-t^{2}}\underset{x^{-d-i-1}}{\coeff}\sum_{j=i}^{-d-1}\frac{x^{j-i}(t^{2d+2g+4j+2}-t^{-2d-2j-2})(1+xt)^{2g}}{(1-x)(1-xt^{2})}\\
			       & =-\frac{(1+t)^{2g}}{1-t^{2}}\underset{x^{-d-i-1}}{\coeff}\left(\frac{t^{2d+2g+4i+2}(1-(xt^{4})^{-d-i})}{1-xt^{4}}-\frac{t^{-2d-2i-2}(1-(xt^{-2})^{-d-i})}{1-xt^{-2}}\right)\frac{(1+xt)^{2g}}{(1-x)(1-xt^{2})}\\
			       & =-\frac{(1+t)^{2g}}{1-t^{2}}\underset{x^{-d-i-1}}{\coeff}\left(\frac{t^{2d+2g+4i+2}-t^{-2d+2g+2}x^{-d-i}}{1-xt^{4}}-\frac{t^{-2d-2i}-x^{-d-i}}{t^{2}-x}\right)\frac{(1+xt)^{2g}}{(1-x)(1-xt^{2})}\\
			      & =-\frac{(1+t)^{2g}}{1-t^{2}}\underset{x^{-d-i-1}}{\coeff}\left(\frac{t^{2d+2g+4i+2}}{1-xt^{4}}-\frac{t^{-2d-2i}}{t^{2}-x}\right)\frac{(1+xt)^{2g}}{(1-x)(1-xt^{2})}.
\end{align*}	
}		       
\end{proof}

\begin{prop}\label{Poincare polynomial of U(2,d)}
$\dss P_{t}(\bU(2,d))=\frac{(1+t)^{2g}((1+t^{3})^{2g}-t^{2g}(1+t)^{2g})}{(1-t^{2})(1-t^{4})}$ for odd $d$.
\end{prop}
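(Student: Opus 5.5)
Since $d$ is odd, every semistable bundle of rank $2$ and degree $d$ is stable. Tensoring by a line bundle gives $\bU(2,d)\cong\bU(2,d-2m)$ for any $m$, so I may replace $d$ by an odd degree with $-d$ as large as I like. I will choose $-d>4g-4$, together with the normalization $d<0$ already in force. Then for every stable $E$ the bundle $E^{\vee}\otimes K_X$ is stable of slope $-d/2+2g-2>2g-2$, so $H^{1}(E^{\vee})\cong H^{0}(E\otimes K_X)^{\vee}=0$. Riemann--Roch then gives $\dim\Hom(E,\cO_X)=N:=-d+2-2g$, independent of $E$.

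By item 3(i) and Remark \ref{cc star flips via framed modules}, the first space $\bFM^{i_0}$, with $i_0=\lfloor -d/2-1\rfloor+1$, maps onto $\bU(2,d)$ by $(E,\psi)\mapsto E$, with fibre $\pp(\Hom(E,\cO_X)^{\vee})\cong\pp^{N-1}$. These fibres have constant dimension, so $\bFM^{i_0}$ is the projectivization of the vector bundle $\pi_{*}$ of the dual universal bundle. This needs either a universal bundle or the standard étale-local argument, which suffices because the map is smooth. The Leray--Hirsch theorem then gives
$$P_{t}(\bFM^{i_0})=P_{t}(\bU(2,d))\,\frac{1-t^{2N}}{1-t^{2}}.$$
The left side is given by the preceding proposition with $i=i_0=(-d-1)/2$. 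It therefore remains to evaluate
$$-\frac{(1+t)^{2g}}{1-t^{2}}\underset{x^{n}}{\coeff}\Bigl(\frac{t^{2d+2g+4i_0+2}}{1-xt^{4}}-\frac{t^{-2d-2i_0}}{t^{2}-x}\Bigr)\frac{(1+xt)^{2g}}{(1-x)(1-xt^{2})},\qquad n=-d-i_0-1=\frac{-d-1}{2},$$
and divide the result by $(1-t^{2N})/(1-t^{2})$.

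I will extract the coefficient by residues. Write $\coeff_{x^{n}}R(x)=\mathrm{Res}_{x=0}R(x)x^{-n-1}$, and treat $t$ as a formal variable with $|t|<1$. The rational function has poles only at $x=1$, $t^{-2}$, $t^{-4}$, $t^{2}$, $0$ and $\infty$. For $n$ large the residue at $\infty$ vanishes, so the coefficient equals minus the sum of the residues at $1$, $t^{-2}$, $t^{-4}$ and $t^{2}$.

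\begin{itemize}
\item At $x=1$ the residue is proportional to $(1+t)^{2g}$, with a simple rational function of $t$ as coefficient.
\item At $x=t^{-2}$ and $x=t^{-4}$ the factor $x^{-n-1}$ produces powers $t^{2n+2}$ and $t^{4n+4}$. Combined with the prefactors $t^{2d+2g+4i_0+2}=t^{2g}$ and $t^{-2d-2i_0}=t^{-d+1}$, these give the terms $t^{2g}(1+t)^{2g}$ and $(1+t^{3})^{2g}$.
\item At $x=t^{2}$ one gets $(1+t^{3})^{2g}t^{-2n-2}$ times $t^{-d+1}$.
\end{itemize}

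After collecting terms, I expect the total to be
$$\frac{(1+t)^{2g}\bigl((1+t^{3})^{2g}-t^{2g}(1+t)^{2g}\bigr)}{(1-t^{2})(1-t^{4})}\cdot\frac{1-t^{2N}}{1-t^{2}}.$$
Here the $t^{2N}$ part should arise from the residues carrying the $d$-dependent powers of $t$. Dividing by $(1-t^{2N})/(1-t^{2})$ then gives the claimed formula.

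The main obstacle is this residue bookkeeping. I must check that every $d$-dependent power of $t$ assembles exactly into the factor $1-t^{2N}$ with $N=-d+2-2g$, and that nothing else survives. I will carry this out first for $g=2$ and a small odd $d$ as a sanity check. If the bookkeeping fails to close, the fallback is the Atiyah--Bott/Harder--Narasimhan computation. That argument is independent of the flips, and for odd $d$ it yields the same formula directly, namely $P_t(B\mathcal{G})$ minus the Harder--Narasimhan strata contributions, divided by $P_t(B\cc^{*})$.
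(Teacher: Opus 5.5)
Your proposal is correct and is essentially the paper's argument. For odd $-d>4g-4$ the first chamber $\bFM^{i_0}$ is a $\pp^{N-1}$-bundle over $\bU(2,d)$, so you divide $P_t(\bFM^{i_0})$ by $(1-t^{2N})/(1-t^2)$. The paper fixes $-d=4g-3$ and cites Thaddeus for the coefficient extraction. Your residue computation closes for all odd $-d\ge 4g-3$, though the attribution of terms to poles in your sketch is off: the poles at $1$ and $t^{2}$ give $t^{2g}(1+t)^{2g}$ and $(1+t^{3})^{2g}$, those at $t^{-2}$ and $t^{-4}$ give the same terms times $t^{2N}$, and the two $(1-t^{2})^{-2}$ terms from $x=1$ and $x=t^{-2}$ cancel since $2n+2=1-d$.

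One small slip: $H^{1}(E^{\vee})=0$ holds because $E\otimes K_X$ is stable of slope $d/2+2g-2<0$, which is where $-d>4g-4$ enters. The slope of $E^{\vee}\otimes K_X$ being $>2g-2$ only says $d<0$ and does not give this vanishing.
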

\begin{proof}
When $-d>2g-2$ is odd, there is a surjective morphism $\bFM^{\s-s}(2,d,\cO_{X})\to\bU(2,d)$ for $0<\s<\s'_{1}$ with fiber $\pp H^{0}(E^{\vee})$. If moreover $-d>4g-4$, then $H^{1}(E^{\vee})=0$ for all stable $E$, so $\bFM^{\s-s}(2,d,\cO_{X})$ is the $\pp^{-d-2g+1}$-bundle over $\bU(2,d)$, and
$$P_{t}(\bU(2,d))=\frac{1-t^{2}}{1-t^{2(-d-2g+2)}}P_{t}(\bFM^{\s-s}(2,d,\cO_{X})).$$

For simplicity, we may assume that $-d=4g-3$. Then $$\s'_{1}=\eta_{\lfloor-d/2-1\rfloor+2}=\eta_{\lfloor(4g-5)/2\rfloor+2}=\eta_{2g-1},\ \bFM^{\s-s}(2,d,\cO_{X})=\bFM^{2g-2}$$ 
and
{\small
\begin{align*}
	P_{t}(\bU(2,d)) & =-\frac{(1+t)^{2g}}{1-t^{4g-2}}\underset{x^{2g-2}}{\coeff}\left(\frac{t^{2g}}{1-xt^{4}}-\frac{t^{4g-2}}{t^{2}-x}\right)\frac{(1+xt)^{2g}}{(1-x)(1-xt^{2})}\\
		               & =\frac{(1+t)^{2g}}{1-t^{4g-2}}\underset{x^{2g-2}}{\coeff}\left(\frac{t^{2g}}{xt^{4}-1}-\frac{t^{4g-2}}{x-t^{2}}\right)\frac{(1+xt)^{2g}}{(1-x)(1-xt^{2})}=\frac{(1+t)^{2g}((1+t^{3})^{2g}-t^{2g}(1+t)^{2g})}{(1-t^{2})(1-t^{4})}
\end{align*}
}
from the proof of \cite[(4.2)]{Tha94}.
\end{proof}
\begin{prop}
$\dss P_{t}(\bM_{\con})=\frac{(1+t)^{2g}((1+t^{3})^{2g}-t^{2g}(1+t)^{2g})}{(1-t^{2})^{2}}$ for odd $d$.
\end{prop}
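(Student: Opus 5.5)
The plan is to identify $\bM_{\con}$ with a $\pp^{1}$-bundle over $\bU(2,d)$, in fact with the product $\bU(2,d)\times\pp^{1}$. The Poincar\'{e} polynomial then follows from Proposition \ref{Poincare polynomial of U(2,d)} and the K\"unneth formula. This is consistent with the target formula: it is exactly $P_{t}(\bU(2,d))\cdot(1+t^{2})$, since $\frac{1-t^{4}}{1-t^{2}}=1+t^{2}=P_{t}(\pp^{1})$.

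\textbf{Step 1: the bundle $E$ is stable.} Since $d$ is odd, every semistable rank $2$ bundle of degree $d$ is stable, so $\bU(2,d)=\bU(2,d)^{s}$. Take a point of $\bFH_{\con}^{\s_{0}-s}$. By Corollary \ref{st FH equiv st FM in rank 2}, its underlying framed module $(E,\psi)$ is $\s_{0}$-stable. Because $\s_{0}<\s'_{1}=1$, the description in Item 3(i) of the list above gives that $E$ is semistable, hence stable. Therefore $\iota_{\bM}$ sends $(E,\ve,\phi,\psi)$ to $(E,\ve,\phi)$ with $E$ stable.

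\textbf{Step 2: the Higgs field is a scalar.} For stable $E$ we have $\End E=\cc$. Hence $\phi=c\cdot\id_{E}$, and the constraint $\phi\in\End((E,\psi))$ holds automatically with $w=c$. The pair $(\ve,c)$ is determined only up to the common scaling $z$ in the definition of isomorphisms, so it defines a point $[\ve:c]\in\pp(\cc\oplus\cc)\cong\pp^{1}$. This is the same computation as the fibre $\pp(\cc\oplus\End((E,\psi)))$ in the proof of Theorem \ref{commutativity of full diagram}.

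Every such point is allowed. The non-nilpotency requirement only matters when $\ve=0$, and then it forces $c\neq0$, which is automatic because $[\ve:c]$ is a point of $\pp^{1}$. Every Hitchin pair $(E,\ve,c\,\id)$ with $E$ stable is stable, since its $\phi$-invariant subbundles are just subbundles of $E$. Consequently $\bM_{\con}$ embeds in $\bU(2,d)\times\pp^{1}$ via $(E,\ve,c\,\id)\mapsto(E,[\ve:c])$. The coordinate $[\ve:c]$ is canonical, because it does not depend on any trivialization of $\End E$. So no twisting occurs, and the embedding can be realized by the $\SL(V)$-quotient argument of Theorem \ref{commutativity of full diagram}(1).

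\textbf{Step 3: surjectivity.} This is the main obstacle. For $E$ to lie in the image, there must exist a nonzero $\psi\colon E\to\cO_{X}$. When one exists, $(E,\psi)$ is $\s_{0}$-stable for small $\s_{0}$, and so $(E,\ve,c\,\id,\psi)$ lies in $\bFH_{\con}^{\s_{0}-s}$ by Corollary \ref{st FH equiv st FM in rank 2}.

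To guarantee a nonzero $\psi$, I would impose the same normalization as in the proof of Proposition \ref{Poincare polynomial of U(2,d)}, namely $-d>4g-4$ (e.g.\ $-d=4g-3$). Then $H^{1}(E^{\vee})=0$ and $h^{0}(E^{\vee})=-d-2g+2>0$ for every stable $E$. This is harmless, since for odd $d$ both sides depend on $d$ only through the isomorphism class of $\bU(2,d)$, which is fixed by tensoring with line bundles.

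With this normalization, $\bM_{\con}\cong\bU(2,d)\times\pp^{1}$, and therefore
$$P_{t}(\bM_{\con})=(1+t^{2})P_{t}(\bU(2,d)).$$
Substituting Proposition \ref{Poincare polynomial of U(2,d)} gives the stated formula.
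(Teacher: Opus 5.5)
Your argument is correct once the bound on $-d$ is in force, and it takes a different route from the paper. The paper deduces the formula from Theorem~\ref{commutativity of full diagram}(1) and Proposition~\ref{Poincare polynomial of U(2,d)}. It works on the framed side, where $p_{\s_{0}}\colon\bFH_{\con}^{\s_{0}-s}\to\bFM^{\s_{0}-s}$ is a $\pp^{1}$-bundle. It then descends along $\iota_{\bM}$ and $\iota_{\bU}$, which both have fibres $\pp(\Hom(E,\cO_{X})^{\vee})$, so the common projective-space factor cancels and $P_{t}(\bM_{\con})=(1+t^{2})P_{t}(\bU(2,d))$. You instead stay on the unframed side and identify $\bM_{\con}$ directly with $\bU(2,d)\times\pp^{1}$, using $\End E=\cc$. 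Your route is more direct and shows that the $\pp^{1}$-bundle over $\bU(2,d)$ is trivial. It also only needs every stable $E$ to admit a nonzero map $E\to\cO_{X}$. By Riemann--Roch, $h^{0}(E^{\vee})\ge 2-2g-d>0$ as soon as $-d\ge 2g-1$, so the vanishing $H^{1}(E^{\vee})=0$ you quote is never used. The paper's route needs the fibre dimension $h^{0}(E^{\vee})$ to be constant in order to divide Poincar\'e polynomials, which it secures with $-d>4g-4$. Its advantage is that it exhibits the formula as a formal consequence of the commutative diagram.

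The one incorrect point is your claim that the normalization is ``harmless''. The left-hand side does not depend on $d$ only through $\bU(2,d)$; that assertion is circular. Your Steps 1--2 actually show $\bM_{\con}\cong B_{d}\times\pp^{1}$ with $B_{d}=\{E\in\bU(2,d)\,:\,H^{0}(E^{\vee})\neq0\}$. Tensoring by a line bundle $M$ identifies $\bU(2,d)$ with $\bU(2,d+2\deg M)$, but it sends $B_{d}$ to $\{E:\Hom(E,M)\neq0\}$, not to $B_{d+2\deg M}$. For small $|d|$, $B_{d}$ is a proper closed subset and the formula fails. For instance, when $d=-1$, a stable $E$ with $H^{0}(E^{\vee})\neq0$ has $E^{\vee}$ an extension of a degree-$1$ line bundle by $\cO_{X}$. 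Such extensions form a family of dimension at most $2g-1<4g-3=\dim\bU(2,-1)$. So the bound on $-d$ is a genuine hypothesis of the proposition, not a reduction. The paper tacitly imposes the same one: it fixes $-d=4g-3$ in the proof of Proposition~\ref{Poincare polynomial of U(2,d)}. You should state the bound as a hypothesis rather than claim it is without loss of generality; $-d\ge 2g-1$ suffices for your argument.
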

\begin{proof}
It is an immediate consequence from Theorem \ref{commutativity of full diagram}-(1) and Proposition \ref{Poincare polynomial of U(2,d)}.
\end{proof}

\section*{Acknowledgments}
The authors would like to thank Alexander H. W. Schmitt for his helpful responses to our inquiries. The first named author was supported by Basic Science Research Program through the National Research Foundation of Korea(NRF) funded by the Ministry of Education(No. RS-2023-00241086).

\end{document}